
\documentclass{amsart}

\usepackage{amssymb}
\usepackage{nicefrac}
\usepackage{enumitem}
\usepackage{hyperref}

\newcommand*{\mailto}[1]{\href{mailto:#1}{\nolinkurl{#1}}}


\newtheorem{theorem}{Theorem}[section]

\newtheorem{lemma}[theorem]{Lemma}
\newtheorem{corollary}[theorem]{Corollary}
\newtheorem{remark}[theorem]{Remark}
\newtheorem{hypothesis}[theorem]{Hypothesis}

\newcommand{\R}{{\mathbb R}}
\newcommand{\N}{{\mathbb N}}
\newcommand{\Z}{{\mathbb Z}}
\newcommand{\C}{{\mathbb C}}

\newcommand{\OO}{\mathcal{O}}
\newcommand{\oo}{o}

\newcommand{\nn}{\nonumber}

\newcommand{\ti}{\tilde}
\newcommand{\be}{\begin{equation}}
\newcommand{\ee}{\end{equation}}
\newcommand{\ba}{\begin{array}}
\newcommand{\ea}{\end{array}}

\newcommand{\spr}[2]{\langle #1 , #2 \rangle}

\newcommand{\id}{{\rm 1\hspace{-0.6ex}l}}
\newcommand{\E}{\mathrm{e}}
\newcommand{\I}{\mathrm{i}}

\newcommand{\im}{\mathrm{Im}}
\newcommand{\re}{\mathrm{Re}}
\newcommand{\dom}{\mathfrak{D}}

\newcommand{\floor}[1]{\lfloor#1 \rfloor}
\newcommand{\ceil}[1]{\lceil#1 \rceil}

\newcommand{\eps}{\varepsilon}
\newcommand{\vphi}{\varphi}
\newcommand{\sig}{\sigma}
\newcommand{\lam}{\lambda}


\numberwithin{equation}{section}


\begin{document}

\title[Singular Weyl--Titchmarsh--Kodaira theory]{Singular Weyl--Titchmarsh--Kodaira theory for one-dimensional Dirac operators}

\author[R.\ Brunnhuber]{Rainer Brunnhuber}
\address{Institute of Mathematics\\ University of Klagenfurt\\
Universit\"atsstrasse 65--67\\ 9020 Klagenfurt\\ Austria}
\email{\mailto{Rainer.Brunnhuber@aau.at}}
\urladdr{\url{http://www.aau.at/~rabrunnh/}}

\author[J.\ Eckhardt]{Jonathan Eckhardt}
\address{Institut Mittag-Leffler\\
Aurav\"agen 17\\ SE-182 60 Djursholm\\ Sweden}
\email{\mailto{jonathaneckhardt@aon.at}}

\author[A.\ Kostenko]{Aleksey Kostenko}
\address{Faculty of Mathematics\\ University of Vienna\\
Oskar-Morgenstern-Platz 1\\ 1090 Wien\\ Austria}
\email{\mailto{duzer80@gmail.com};\mailto{Oleksiy.Kostenko@univie.ac.at}}

\author[G.\ Teschl]{Gerald Teschl}
\address{Faculty of Mathematics\\ University of Vienna\\
Oskar-Morgenstern-Platz 1\\ 1090 Wien\\ Austria\\ and International
Erwin Schr\"odinger
Institute for Mathematical Physics\\ Boltzmanngasse 9\\ 1090 Wien\\ Austria}
\email{\mailto{Gerald.Teschl@univie.ac.at}}
\urladdr{\url{http://www.mat.univie.ac.at/~gerald/}}

\thanks{Monatsh. Math. {\bf 174}, 515--547 (2014)}
\thanks{{\it Research supported by the Austrian Science Fund (FWF) under Grant No.\ Y330 and M1309 as well as by the AXA Mittag-Leffler Fellowship Project, funded by the AXA Research Fund}}

\keywords{Dirac operators, spectral theory, Borg--Marchenko theorem}
\subjclass[2010]{Primary 34B20, 34L40; Secondary 34L10, 34A55}

\begin{abstract}
We develop singular Weyl--Titchmarsh--Kodaira theory for one-dimensional Dirac operators. In particular, we establish
existence of a spectral transformation as well as local Borg--Marchenko and Hochstadt--Lieberman type
uniqueness results. Finally, we give some applications to the case of radial Dirac operators.
\end{abstract}

\maketitle

\section{Introduction}
\label{sec:int}

The main aim of the present paper is to develop singular Weyl--Titchmarsh--Kodaira theory for one-dimensional Dirac operators.
Classical Weyl--Titchmarsh--Kodaira theory has originally been developed for one-dimensional Schr\"odinger operators
with one regular endpoint and has subsequently been extended to a number of other operators. 
For example, this has been done by Hinton and Shaw in a series of papers \cite{HS81}, \cite{HS82}, \cite{HS83}, \cite{HS84}, \cite{HS86} (see also \cite{HS93}, \cite{HS97} and the references in \cite{cg}) for general singular Hamiltonian systems which also include one-dimensional Dirac operators as a special case.  
However, it has been shown by Kodaira \cite{ko}, Kac \cite{ka} and more recently by Fulton \cite{ful08},
Gesztesy and Zinchenko \cite{gz}, Fulton and Langer \cite{fl}, Kurasov and Luger \cite{kl}, and Kostenko, Sakhnovich, and Teschl
\cite{kst}, \cite{kst2}, \cite{kst3}, \cite{kt}, \cite{kt2} that many aspects of this classical theory still can be established at a
singular endpoint. It has recently proven to be a powerful tool for inverse spectral theory for these operators
and further refinements were given by some of us in \cite{je}, \cite{je2}, \cite{egnt2}, \cite{egnt3}, \cite{et}, \cite{kt}. The analogous
theory for one-dimensional Dirac operators is still missing and it is the purpose of the present paper to fill this gap. 

As our first main result we establish existence of a spectral measure and the corresponding spectral transform (Theorem~\ref{thm:sptr}).
Furthermore, we prove a local Borg--Marchenko \cite{borg,mar} result (Theorem~\ref{thmbm}), which generalizes the classical result
whose local version was first established by Clark and Gesztesy \cite{cg} (see also \cite{sa02, sa06,sakb}). Next, in Section~\ref{secPertRD}, we apply our
results to radial Dirac operators. Namely, we show that for this class of Dirac operators the singular Weyl function is a generalized
Nevanlinna function (Theorem~\ref{thmpbgn}) and prove a local Borg--Marchenko result (Theorem~\ref{thmBMPRD}). Finally, we
show that in the case of purely discrete spectra the spectral measure uniquely determines the operator (Theorem~\ref{thmSpectFuncDisc})
and use this to establish a general Hochstadt--Lieberman-type uniqueness result (Theorem~\ref{thmHL}). An alternate
approach using the theory of de Branges spaces will be given in \cite{ekt} and spectral asymptotics for the singular Weyl
functions will be given in \cite{ekt2}.

For closely related research we also refer to \cite{egnt1,egnt2, egnt3, et2,fll,gkm,ma,ma2}.

\section{Singular Weyl--Titchmarsh--Kodaira theory}
\label{sec:swm}

We will be concerned with Dirac operators in the Hilbert space $L^2(I,\C^2)$, where $I=(a,b) \subseteq \R$ (with $-\infty \le
a < b \le \infty$) is an arbitrary interval. To this end, we consider the differential expression
\begin{equation}\label{dirac}
\tau = \frac{1}{\I} \sig_2 \frac{d}{dx} + Q(x).
\end{equation}
Here the potential matrix $Q(x)$ is given by
\begin{equation}
Q(x) =  q_{\rm el}(x)\id  + q_{\rm am}(x)\sig_1 +
(m+ q_{\rm sc}(x)) \sig_3,
\end{equation}
$\sig_1$, $\sig_2$, $\sig_3$ denote the Pauli matrices
\begin{equation}
\sig_1=\begin{pmatrix} 0 & 1 \\ 1 & 0\end{pmatrix}, \quad
\sig_2=\begin{pmatrix} 0 & -\I \\ \I & 0\end{pmatrix}, \quad
\sig_3=\begin{pmatrix} 1 & 0 \\ 0 & -1\end{pmatrix},
\end{equation}
and $m$, $q_{\rm sc}$, $q_{\rm el}$, and $q_{\rm am}$ are interpreted as
mass, scalar potential, electrostatic potential, and anomalous
magnetic moment, respectively (see \cite[Chapter~4]{th}).
As usual, we require that $m\in[0,\infty)$ and that $q_{\rm sc}$, $q_{\rm el}$, $q_{\rm
am}
\in L^1_{loc}(I)$ are real-valued. 

We do not include a
magnetic moment $\tilde{\tau} = \tau +\sig_2 q_{\rm mg}(x)$
as it can be easily eliminated by a simple gauge transformation
$\tau = \Gamma^{-1} \tilde{\tau} \Gamma$, where $\Gamma =\exp(-\I\int^x q_{\rm mg}(r) dr)$.
Furthermore, we will occasionally omit the electrostatic potential $q_{\rm el}$ since by employing the gauge transformation
\begin{align}\label{eqnGaugeEL}
\Gamma & = \begin{pmatrix} \cos(\varphi) & -\sin(\varphi)\\
\sin(\varphi) & \cos(\varphi) \end{pmatrix}, &
\varphi(x) & = \int^x q_{\rm el}(r) dr,
\end{align}
it is possible to transform the differential expression $\tau$ to the new form
\begin{align}
q_{\rm el} &\to 0,\\
q_{\rm am} &\to q_{\rm am} \cos(2\vphi) - (m+q_{\rm sc}) \sin(2\vphi), \\
m+ q_{\rm sc} &\to (m+q_{\rm sc}) \cos(2\vphi) + q_{\rm am} \sin(2\vphi).
\end{align}
Let us also mention that there is another gauge transformation which gets rid of $q_{\rm am}$
under some additional assumptions (see \cite[Section~7.1.1]{ls}).

Finally, if one solution $u$ is known, a second solution $v$ can be found using d'Alembert reduction (cf., e.g., \cite[Section~3.4]{tode}). 
In fact, if $\tau u= z u$, then
\begin{align}
v(x) = u(x) \int^x \frac{Q_{22}(r)-z}{u_1(r)^2} dr - \begin{pmatrix} 0 \\ u_1(x)^{-1} \end{pmatrix}
\end{align}
is a second solution with $W(v,u)=1$. Similarly,
\begin{align}
\ti{v}(x) = -u(x) \int^x \frac{Q_{11}(r)-z}{u_2(r)^2} dr - \begin{pmatrix} u_2(x)^{-1}\\0 \end{pmatrix}
\end{align}
is a second solution with $W(\ti{v},u)=1$ as well.

If $\tau$ is in the limit point case at both $a$ and $b$, then $\tau$ gives rise to a unique
self-adjoint operator $H$ when defined maximally (cf., e.g., \cite{ls},
\cite{wdl}, \cite{wei2}). Otherwise, we fix a boundary
condition at each endpoint where $\tau$ is in the limit circle case.
Explicitly, such an operator $H$ is given by
\begin{align}\begin{split}
H: \ba[t]{lcl} \dom(H) &\to& L^2(I,\C^2) \\ f &\mapsto& \tau f \ea
\end{split}\end{align}
where
\begin{align}\begin{split} \label{domH}
\dom(H) = \{ f \in L^2(I,\C^2) \,|\, &  f \in AC_{loc}(I,\C^2), \, \tau f \in
L^2(I,\C^2),\\ & \qquad\qquad W_a(u_-,f) = W_b(u_+,f) =0 \},
\end{split}\end{align}
with
\begin{equation}
W_x(f,g) = \I \spr{f^*(x)}{\sig_2 g(x)} = f_1(x) g_2(x) - f_2(x) g_1(x)
\end{equation}
the usual Wronskian (we remark that the limit $W_{a,b}(.,..) = \lim_{x \to
a,b} W_x(.,..)$ exists for functions as in (\ref{domH})). Here the function
$u_-$ (resp.\ $u_+$) used to generate the boundary condition at $a$
(resp.\ $b$) can be chosen to be a nontrivial solution of $\tau u =0$ if $\tau$
is in the limit circle case at $a$ (resp.\ $b$) and zero else.

For a given point $c\in I$ consider the operators $H^D_{(a,c)}$ and $H^D_{(c,b)}$
which are obtained by restricting $H$ to $(a,c)$ and $(c,b)$ with a Dirichlet boundary condition $f_1(c)=0$ at
$c$, respectively. The corresponding operators with a Neumann boundary condition $f_2(c)=0$ will be
denoted by $H^N_{(a,c)}$ and $H^N_{(c,b)}$.

Moreover, let $c(z,x)$ and $s(z,x)$ be the solutions of $\tau u = z\, u$ corresponding
to the initial conditions $c_1(z,c)=1$, $c_2(z,c)=0$ and $s_1(z,c)=0$, $s_2(z,c)=1$.
Then we can define the Weyl solutions
\begin{align}
u_-(z,x) &= c(z,x) - m_-(z) s(z,x), \qquad z\in\C\setminus\sig(H^D_{(a,c)}),\\\label{defupm}
u_+(z,x) &= c(z,x) + m_+(z) s(z,x), \qquad z\in\C\setminus\sig(H^D_{(c,b)}),
\end{align}
where $m_\pm(z)$ are the Weyl $m$-functions corresponding to the base point $c$ and associated with
$H^D_{(a,c)}$, $H^D_{(c,b)}$, respectively. Note that the functions $m_\pm(z)$ are Herglotz--Nevanlinna functions.

We refer to the monographs \cite{ls}, \cite{wdl}, \cite{wei2} for background
and also to \cite{th} for further information about Dirac operators and their
applications.

For the rest of this section we will closely follow the presentation from \cite{kst2}. Most proofs can be done literally following the
arguments in \cite{kst2} and hence we will omit them here. Our first ingredient to define an analogous singular Weyl function
at $a$ is a system of real entire solutions $\Theta(z,x)$ and $\Phi(z,x)$ such that $\Phi(z,x)$ lies in the domain of
$H$ near $a$ and such that the Wronskian satisfies $W(\Theta(z),\Phi(z))=1$. To this end, we require the following
hypothesis, which turns out necessary and sufficient for such a system of solutions to exist.

\begin{hypothesis}\label{hyp:gen}
Suppose that the spectrum of $H^D_{(a,c)}$ is purely discrete.
\end{hypothesis}

\begin{lemma}\label{lem:pt}
The following properties are equivalent:
\begin{enumerate}[label=\emph{(\roman*)}, leftmargin=*, widest=XX]
\item The spectrum of $H^D_{(a,c)}$ is purely discrete.
\item There is a real entire solution $\Phi(z,x)$, which is non-trivial and lies in the domain of $H$
near $a$ for each $z\in\C$.
\item There are real entire solutions $\Theta(z,x)$, $\Phi(z,x)$ with $W(\Theta(z),\Phi(z))=1$, such that
 $\Phi(z,x)$ is non-trivial and lies in the domain of $H$ near $a$ for each $z\in\C$.
\end{enumerate}
\end{lemma}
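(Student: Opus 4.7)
The plan is to prove the cycle (iii) $\Rightarrow$ (ii) $\Rightarrow$ (i) $\Rightarrow$ (iii), transcribing the Schr\"odinger-type argument of \cite{kst2}. The implication (iii) $\Rightarrow$ (ii) is immediate on forgetting $\Theta$. For (ii) $\Rightarrow$ (i), the key observation is that the eigenvalues of $H^D_{(a,c)}$ are exactly the zeros of the entire function $z \mapsto \Phi_1(z,c)$. Indeed, the space of solutions of $\tau u = z_0 u$ lying in $\dom(H)$ near $a$ is one-dimensional---either by limit point at $a$, or in the limit circle case by the boundary condition $W_a(u_-,\cdot) = 0$---and contains the nontrivial $\Phi(z_0,\cdot)$ by hypothesis. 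Every Dirichlet eigenfunction at $z_0$ must be a multiple of $s(z_0,\cdot)$ and also lie in this one-dimensional space, hence be proportional to $\Phi(z_0,\cdot)$, forcing $\Phi_1(z_0,c)=0$; conversely, $\Phi_1(z_0,c)=0$ makes $\Phi(z_0,\cdot)$ itself a Dirichlet eigenfunction. Self-adjointness of $H^D_{(a,c)}$ rules out non-real eigenvalues, so $\Phi_1(\cdot,c)\not\equiv 0$, and its zero set is discrete.

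For (i) $\Rightarrow$ (iii), I would first construct $\Phi$ from the Weyl solution $u_-(z,x)=c(z,x)-m_-(z)s(z,x)$. Under hypothesis (i), the Herglotz--Nevanlinna function $m_-$ extends to a meromorphic function on $\C$ with simple poles precisely at the discrete eigenvalue set $\{\lambda_n\}$ of $H^D_{(a,c)}$, and the Weierstrass product theorem yields a real entire function $\alpha$ with simple zeros at exactly these points. Then $\Phi(z,x):=\alpha(z)u_-(z,x)$ is a real entire solution which remains square-integrable near $a$ (inherited from $u_-$), and residue extraction at each $\lambda_n$ produces a nonzero multiple of the Dirichlet eigenfunction $s(\lambda_n,\cdot)$, confirming that $\Phi(z,\cdot)$ is nontrivial for every $z\in\C$.

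To manufacture the companion $\Theta$, I would expand $\Phi(z,x)=A(z)c(z,x)+B(z)s(z,x)$ with $A:=\Phi_1(\cdot,c)$, $B:=\Phi_2(\cdot,c)$ real entire; nontriviality of $\Phi(z,\cdot)$ is then equivalent to $A$ and $B$ having no common zero. Invoking the classical fact that the ring of entire functions is a Bezout domain, one obtains entire $C$, $D$ with $AD-BC=-1$, which is precisely the Wronskian identity $W(\Theta,\Phi)=1$ for $\Theta(z,x):=C(z)c(z,x)+D(z)s(z,x)$; reality of $C$, $D$ can be arranged by averaging $C \mapsto (C+C^\#)/2$, $D \mapsto (D+D^\#)/2$ with $f^\#(z):=\overline{f(\bar z)}$, using that $A$ and $B$ are themselves real. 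The main obstacle I anticipate is this Bezout step: it relies on the nontrivial structural theorem that $(A,B)$ generates the unit ideal in the ring of entire functions precisely because $A$ and $B$ share no zero, after which the remaining construction is essentially bookkeeping.
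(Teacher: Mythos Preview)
Your argument is correct and matches the approach the paper defers to \cite{kst2}: clear the poles of $m_-$ with a real Weierstrass product to obtain $\Phi$, then invoke the Bezout property of the ring of entire functions on the coprime pair $\Phi_1(\cdot,c),\Phi_2(\cdot,c)$ to manufacture $\Theta$ (with the symmetrization $f\mapsto\tfrac12(f+f^\#)$ to enforce reality). The only point worth making explicit in (ii)$\Rightarrow$(i) is that you have shown more than discreteness of the eigenvalue set: since $m_-(z)=-\Phi_2(z,c)/\Phi_1(z,c)$ is then a \emph{meromorphic} Herglotz--Nevanlinna function, its spectral measure is pure point supported on the (real, simple) poles, which is exactly what ``purely discrete spectrum'' of $H^D_{(a,c)}$ requires---identifying the eigenvalues alone does not rule out continuous spectrum.
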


Given such a system of real entire solutions $\Theta(z,x)$ and $\Phi(z,x)$, we define the singular Weyl function
\begin{equation}\label{defM}
M(z) = -\frac{W(\Theta(z),u_+(z))}{W(\Phi(z),u_+(z))}
\end{equation}
such that the solution lying in the domain of $H$ near $b$ is given by
\begin{equation}
u_+(z,x)= \alpha(z) \big(\Theta(z,x) + M(z) \Phi(z,x)\big), \quad x\in I,
\end{equation}
where $\alpha(z)= - W(\Phi(z),u_+(z))$.
It is immediate from the definition that the singular Weyl function $M(z)$ is analytic in $\C\backslash\R$
and satisfies $M(z)=M(z^*)^*$. Note that $M(z)$ will in general not be a Herglotz--Nevanlinna function.
However, following literally the argument in \cite[Lemma~3.2]{kst2}, one infers that associated with $M(z)$ is a corresponding
spectral measure $\rho$ given by the Stieltjes--Liv\v{s}i\'{c} inversion formula
\begin{equation}\label{defrho}
\frac{1}{2} \left( \rho\big((\lambda_0,\lambda_1)\big) + \rho\big([\lambda_0,\lambda_1]\big) \right)=
\lim_{\eps\downarrow 0} \frac{1}{\pi} \int_{\lambda_0}^{\lambda_1} \im\big(M(\lambda+\I\eps)\big) d\lambda.
\end{equation}

\begin{theorem}\label{thm:sptr}
Suppose Hypothesis~\ref{hyp:gen} and let the spectral measure $\rho$ be given by \eqref{defrho}.
The mapping
\begin{equation}
U: L^2(I, \C^2) \to L^2(\R,d\rho), \qquad f \mapsto \hat{f}
\end{equation}
where $\hat{f}$ is defined by
\begin{equation}\label{eqhatf}
\hat{f}(\lam) = \lim_{c\uparrow b} \int_a^c \Phi_1(\lam,x) f_1(x) + \Phi_2(\lam,x) f_2(x)~dx
\end{equation}
is unitary and its inverse
\begin{equation}
U^{-1}: L^2(\R,d\rho) \to L^2(I, \C^2), \qquad \hat{f} \mapsto f
\end{equation}
is given by
\begin{equation}\label{Uinv}
f(x) = \lim_{r\to\infty} \int_{-r}^r \Phi(\lam,x) \hat{f}(\lam)d\rho(\lam) =
\lim_{r\to\infty} \begin{pmatrix} \int_{-r}^r \Phi_1(\lam,x) \hat{f}(\lam)~d\rho(\lam) \\
\int_{-r}^r \Phi_2(\lam,x) \hat{f}(\lam)~d\rho(\lam)\end{pmatrix}.
\end{equation}
Moreover, $U$ maps $H$ to multiplication by $\lam$. Note that the right-hand sides of
\eqref{eqhatf} and \eqref{Uinv}  are to be understood as limits in $L^2(\R,d\rho)$ and
$L^2(I, \C^2)$, respectively.
\end{theorem}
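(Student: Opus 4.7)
The strategy, following the presentation in \cite{kst2} for Schr\"odinger operators, is to transfer the classical matrix-valued spectral transformation associated with the regular base point $c$ to the scalar transformation generated by the real entire solution $\Phi$. Since $(c(z,\cdot),s(z,\cdot))$ is a fundamental system of $\tau u = zu$, there exist real entire functions $A,B,C,D$ with $AD - BC = W(\Theta,\Phi) = 1$ such that
\begin{equation*}
\Phi(z,x) = A(z) c(z,x) + B(z) s(z,x), \qquad \Theta(z,x) = C(z) c(z,x) + D(z) s(z,x).
\end{equation*}
Inserting these into the identity $u_+(z,\cdot) = \alpha(z)\bigl(\Theta(z,\cdot) + M(z) \Phi(z,\cdot)\bigr) = c(z,\cdot) + m_+(z) s(z,\cdot)$ and solving for $M$ yields the M\"obius-type relation
\begin{equation*}
M(z) = \frac{D(z) - C(z) m_+(z)}{A(z) m_+(z) - B(z)}.
\end{equation*}

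Next, the classical regular Weyl--Titchmarsh--Kodaira theory for Dirac operators (see \cite{ls,wei2}) provides a matrix-valued spectral measure $\Omega = (\Omega_{ij})_{i,j=1,2}$ on $\R$, whose Herglotz representative is an explicit rational expression in $m_\pm$, together with a unitary map
\begin{equation*}
V\colon L^2(I,\C^2) \to L^2(\R,d\Omega), \qquad (Vf)(\lambda) = \begin{pmatrix} \int_a^b \bigl(c_1(\lambda,x) f_1(x) + c_2(\lambda,x) f_2(x)\bigr)\,dx \\ \int_a^b \bigl(s_1(\lambda,x) f_1(x) + s_2(\lambda,x) f_2(x)\bigr)\,dx \end{pmatrix},
\end{equation*}
defined first on compactly supported $f$, extended by continuity, and intertwining $H$ with multiplication by $\lambda$. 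For such $f$ the change-of-basis formula for $\Phi$ gives the pointwise identity $\hat{f}(\lambda) = A(\lambda)(Vf)_1(\lambda) + B(\lambda) (Vf)_2(\lambda)$. The central technical step is then the measure identity
\begin{equation*}
d\rho(\lambda) = A(\lambda)^2\, d\Omega_{11}(\lambda) + 2 A(\lambda) B(\lambda)\, d\Omega_{12}(\lambda) + B(\lambda)^2\, d\Omega_{22}(\lambda),
\end{equation*}
which one establishes by computing $\im M(\lambda + \I\eps)$ from the M\"obius relation above, comparing with the corresponding expressions of $\im \Omega_{ij}$ in terms of $\im m_\pm$, invoking $AD - BC = 1$ to make the cross terms match, and applying the Stieltjes--Liv\v{s}i\'c inversion \eqref{defrho}.

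Once the measure identity is in hand, the Parseval identity $\|\hat{f}\|_{L^2(\R,d\rho)} = \|Vf\|_{L^2(\R,d\Omega)} = \|f\|_{L^2(I,\C^2)}$ holds on the dense set of compactly supported $f$, so $U$ extends uniquely to an isometry; the intertwining $U H U^{-1} = \lambda \cdot$ is immediate from $\tau \Phi(\lambda,\cdot) = \lambda \Phi(\lambda,\cdot)$. Surjectivity (equivalently, the inversion formula \eqref{Uinv}) is obtained by the standard adjoint argument: for $\hat{g} \in L^2(\R,d\rho)$ with compact support one verifies that $f(x) = \int_\R \Phi(\lambda,x) \hat{g}(\lambda)\,d\rho(\lambda)$ lies in $L^2(I,\C^2)$ with $Uf = \hat{g}$, using the already established isometry to pass to the limit. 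I expect the measure identity of the second paragraph to be the main obstacle, since every other step is essentially bookkeeping inherited verbatim from \cite{kst2}; a subsidiary nuisance is justifying the $L^2$-limits in \eqref{eqhatf} and \eqref{Uinv} at the singular endpoint $b$, but this follows automatically once $V$ is known to be defined as a similar limit and $A,B$ are fixed real entire functions independent of the truncation.
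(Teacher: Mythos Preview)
Your proposal is correct and follows precisely the approach the paper intends: the paper omits the proof entirely, stating that it ``can be done literally following the arguments in \cite{kst2}'', and your outline (change of basis at the regular point $c$, M\"obius relation for $M$, measure identity linking $\rho$ to the classical $2\times 2$ spectral matrix $\Omega$, then transfer of unitarity) is exactly the argument of \cite[Section~3]{kst2} adapted to the Dirac setting. One trivial slip: with $A=\Phi_1(z,c)$, $B=\Phi_2(z,c)$, $C=\Theta_1(z,c)$, $D=\Theta_2(z,c)$ one has $W(\Theta,\Phi)=CB-DA$, so $AD-BC=-1$ rather than $+1$; and for surjectivity you will need (as in \cite{kst2}) the observation that $d\Omega$ is rank one $\rho$-a.e.\ in the direction $(A(\lambda),B(\lambda))^T$, which is what makes the map $L^2(\R,d\Omega)\to L^2(\R,d\rho)$ onto rather than merely isometric.
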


\begin{corollary}\label{corsup}
The sets
\begin{align} 
\Sigma_{ac} &= \{\lam \,|\, 0<\limsup_{\eps\downarrow 0} \im(M(\lam+\I\eps)) < \infty\},\\ 
\Sigma_s &= \{\lam \,| \limsup_{\eps\downarrow 0}\im(M(\lam+\I\eps)) = \infty\},\\ 
\Sigma_p &= \{\lam \,| \lim_{\eps\downarrow 0} \eps\im(M(\lam+\I\eps))>0 \},\\ 
\Sigma &= \Sigma_{ac} \cup \Sigma_s = \{\lam \,|\, 0<\limsup_{\eps\downarrow 0} \im(M(\lam+\I\eps))\}
\end{align}
are minimal supports for $\rho_{ac}$, $\rho_s$, $\rho_{pp}$ and $\rho$, respectively.
We could even restrict ourselves to values of $\lam$
where the $\limsup$ is a $\lim$ (finite or infinite).\newline
Moreover, the spectrum of $H$ is given by the closure of $\Sigma$,
\be
\sigma(H) = \overline{\Sigma},
\ee
the point spectrum (the set of eigenvalues) is given by $\Sigma_p$,
\be
\sigma_p(H) = \Sigma_p,
\ee
and the absolutely continuous spectrum is given by the essential closure
of $\Sigma_{ac}$,
\be
\sigma(H_{ac}) = \overline{\Sigma}_{ac}^{ess}.
\ee
Recall that $\overline{\Omega}^{ess} = \{ \lam\in\R \,|\,
|(\lam-\eps,\lam+\eps)\cap \Omega|>0 \mbox{ for all } \eps>0\}$ where $|\Omega|$ denotes
the Lebesgue measure of a Borel set $\Omega$.
\end{corollary}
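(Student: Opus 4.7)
The plan is to transfer everything to the spectral representation given by Theorem~\ref{thm:sptr} and then read off the minimal supports of the various parts of $\rho$ from the boundary behaviour of $M(z)$ via the Stieltjes--Liv\v{s}i\'c formula \eqref{defrho}.

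Since $U$ is unitary and intertwines $H$ with multiplication by $\lam$ on $L^2(\R,d\rho)$, elementary multiplication-operator theory immediately gives
\[
\sig(H)=\supp(\rho),\qquad \sig_p(H)=\{\lam\in\R:\rho(\{\lam\})>0\},\qquad \sig(H_{ac})=\overline{S}^{ess},
\]
where $S$ is any minimal support of $\rho_{ac}$. Hence the three spectral identifications at the end of the corollary will follow as soon as we know that $\Sigma$, $\Sigma_p$, $\Sigma_{ac}$, and $\Sigma_s$ are minimal supports of $\rho$, $\rho_{pp}$, $\rho_{ac}$, and $\rho_s$, respectively, the last being forced by $\Sigma_s=\Sigma\setminus\Sigma_{ac}$ up to $\rho$-null modifications.

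To establish the minimal-support claims I would set $F(\lam,\eps):=\frac{1}{\pi}\im M(\lam+\I\eps)$ and invoke the classical Fatou/De~la~Vall\'ee~Poussin boundary-value theory for Poisson--Stieltjes integrals of the positive Borel measure $\rho$. Proceeding exactly as in \cite[Lemma~3.3]{kst2}, one obtains that $\lim_{\eps\downarrow 0}F(\lam,\eps)$ exists in $[0,\infty]$ for Lebesgue- and $\rho$-a.e.\ $\lam$, equals $d\rho_{ac}/d\lam$ whenever it is finite, and that $\pi\lim_{\eps\downarrow 0}\eps F(\lam,\eps)=\rho(\{\lam\})$ at every $\lam\in\R$. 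By the Lebesgue decomposition and the Radon--Nikodym theorem this identifies the set where $F$ has a positive finite boundary value as a minimal support of $\rho_{ac}$, the set where $F$ blows up as a minimal support of $\rho_s$, the set where $\eps F$ has a strictly positive limit as a minimal support of $\rho_{pp}$, and their union as a minimal support of $\rho$ itself, which are precisely $\Sigma_{ac}$, $\Sigma_s$, $\Sigma_p$, and $\Sigma$. The clause allowing $\limsup$ to be replaced by $\lim$ reflects the fact that the exceptional set on which the boundary limit fails to exist is both Lebesgue- and $\rho$-null, hence irrelevant for minimal-support purposes.

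The main obstacle is that $M(z)$ is not, in general, a Herglotz--Nevanlinna function (as stressed explicitly just before \eqref{defrho}), so the standard Poisson-integral theorems cannot be applied to $M$ verbatim. The way around this, parallel to the Schr\"odinger treatment of \cite{kst2}, is to exploit the positivity of $\rho$ -- which is itself a by-product of the unitarity of $U$ -- and to rewrite $\im M$ on the upper half-plane as the Poisson integral of $\rho$ modulo a correction coming from the entire reference solutions $\Theta(z,x)$, $\Phi(z,x)$ that is real-analytic up to the real axis and therefore contributes nothing to any of the three limits in question. Once this reduction is in place, the classical Poisson-integral theory applies to the singular part of $M$ and the remainder of the proof is routine.
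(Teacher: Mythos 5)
Your proposal is essentially the approach the paper intends: the paper omits the proof and explicitly defers to \cite{kst2} ("Most proofs can be done literally following the arguments in \cite{kst2}"), and you reproduce that route — reduce to the multiplication operator via the unitary $U$ of Theorem~\ref{thm:sptr}, then read off minimal supports from the boundary behaviour of $\im M$ via the Fatou/de~la~Vall\'ee~Poussin machinery. You also correctly flag the one genuine subtlety, namely that $M$ need not be Herglotz--Nevanlinna, and point to the right repair (the representation of Theorem~\ref{IntR} or, equivalently, the renormalization of Remark~\ref{rem:herg}, which produces a genuine Herglotz function $\tilde M$ with $d\tilde\rho=\E^{-2g}\,d\rho$ mutually absolutely continuous with $\rho$; one then checks that the extra real-entire factors leave the three $\limsup$/$\lim$ criteria unchanged pointwise, since $\hat g(\lam)>0$ is continuous and the corrections to $\im M$ are $O(\eps)\cdot\re(\cdot)$ terms that vanish in the relevant limits). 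This is precisely the argument of \cite[Lemma~3.3 and Corollary~3.5]{kst2}, so your sketch is correct and on the same path.
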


Rather than $u_+(z,x)$, we will use
\begin{equation}\label{defpsi}
\Psi(z,x)= \Theta(z,x) + M(z) \Phi(z,x), \quad x\in I.
\end{equation}
For the resolvent we have
\be \label{gfres}
(H-z)^{-1} f(x) = \int_a^b G(z,x,y) f(y) dy,
\ee
where
\be\label{defgf}
G(z,x,y) = \begin{cases} \Psi(z,x)\otimes\Phi(z,y), & y<x,\\ \Phi(z,x) \otimes \Psi(z,y), & y>x,
\end{cases}
\ee
is the Green's function of $H$.

We conclude this section with a simple fact concerning the spectral transformation of the
Green's function of $H$ which will turn out to be useful later on.

\begin{lemma}\label{lemUub}
Recall the Green's function
\be 
G(z,x,y) = \begin{pmatrix}G_{11}(z,x,y) & G_{12}(z,x,y) \\ G_{21}(z,x,y) & G_{22}(z,x,y)  \end{pmatrix}
\ee
of $H$ defined in \eqref{defgf}. Then we have
\be\label{UG}
(U G_{i}(z,x,.))(\lam) = \frac{\Phi_i(\lam,x)}{\lam-z}, \quad\quad\quad\text{i=1,2}
\ee
for every $x\in I$ and every $z\in\C\setminus\sigma(H)$. Here $G_{i}(z,x,.)$ has to be interpreted as
\be \label{greennotation}
G_{i}(z,x,.) = \begin{pmatrix}G_{i1}(z,x,.) \\G_{i2}(z,x,.)\end{pmatrix}, \quad\quad\quad\text{i=1,2}.
\ee
\end{lemma}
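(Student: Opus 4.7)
The plan is to exploit the intertwining property of the spectral transform, $U(H-z)^{-1}U^{-1}$ equals multiplication by $(\lambda-z)^{-1}$ (implicit in Theorem~\ref{thm:sptr}), and to read off the spectral transform of $G_i(z,x,\cdot)$ by pairing against an arbitrary test function. Before doing so, one needs to verify that $G_i(z,x,\cdot)$ is an element of $L^2(I,\C^2)$ for each fixed $x\in I$ and $z\in\C\setminus\sigma(H)$. This is immediate from \eqref{defgf}: on $(a,x)$ the columns of $G(z,x,\cdot)$ are scalar multiples of $\Phi(z,\cdot)$, which is square integrable near $a$ because it lies in the domain of $H$ there, and on $(x,b)$ they are scalar multiples of $\Psi(z,\cdot)=\alpha(z)^{-1}u_+(z,\cdot)$, which is square integrable near $b$ for the analogous reason.

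The key step is then to compute $((H-z)^{-1}f)_i(x)$ in two ways for an arbitrary $f\in L^2(I,\C^2)$. From \eqref{gfres},
\[
((H-z)^{-1}f)_i(x) = \int_a^b G_i(z,x,y)^T f(y)\,dy.
\]
Since $\Theta$ and $\Phi$ are real entire and $M(z^*)=M(z)^*$, the Green's function satisfies $G(z^*,x,y)=\overline{G(z,x,y)}$, so this bilinear integral equals the sesquilinear inner product $\langle G_i(z^*,x,\cdot),f\rangle_{L^2(I,\C^2)}$; by unitarity of $U$ it therefore also equals $\langle UG_i(z^*,x,\cdot),\hat{f}\rangle_{L^2(\R,d\rho)}$. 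On the other hand, combining the intertwining property with \eqref{Uinv} yields
\[
((H-z)^{-1}f)_i(x) = \int_\R \Phi_i(\lambda,x)\,\frac{\hat{f}(\lambda)}{\lambda-z}\,d\rho(\lambda).
\]

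Equating the two expressions and letting $\hat{f}$ range over $L^2(\R,d\rho)$ forces $\overline{(UG_i(z^*,x,\cdot))(\lambda)}=\Phi_i(\lambda,x)/(\lambda-z)$ for $\rho$-almost every $\lambda$. Since $\Phi_i(\lambda,x)$ is real for real $\lambda$, taking complex conjugates and renaming $z^*\leftrightarrow z$ then gives \eqref{UG}. The main obstacle is not conceptual but rather the careful bookkeeping of complex conjugation: the transform $U$ in \eqref{eqhatf} uses the bilinear pairing $\Phi\cdot f$ with no conjugate (admissible only because $\Phi(\lambda,\cdot)$ is real for $\lambda\in\R$), so one must take care to translate between that pairing and the sesquilinear $L^2$ inner product before invoking unitarity.
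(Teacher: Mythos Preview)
Your approach is essentially the one the paper takes: use the intertwining relation $U(H-z)^{-1}=\tfrac{1}{\lambda-z}U$ and read off the transform of $G_i(z,x,\cdot)$ by pairing against test functions. Your handling of the bilinear/sesquilinear pairing and the symmetry $G(z^*,x,y)=\overline{G(z,x,y)}$ is correct and is a point the paper glosses over.

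There is, however, one step that needs more care. When you write
\[
((H-z)^{-1}f)_i(x)=\int_\R \Phi_i(\lambda,x)\,\frac{\hat f(\lambda)}{\lambda-z}\,d\rho(\lambda)
\]
for a \emph{fixed} $x$, you are using~\eqref{Uinv}, but that formula only gives the inverse transform as an $L^2(I,\C^2)$-limit, hence pointwise only for almost every $x$, with the exceptional set a~priori depending on $f$. So you cannot simply fix $x$ first and then let $\hat f$ range over all of $L^2(\R,d\rho)$. The paper closes this gap by first restricting to $\hat f$ with compact support: then the $\rho$-integral needs no limit, both sides are manifestly continuous in $x$, and the a.e.\ equality upgrades to equality for every $x\in I$. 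At that point one may fix $x$ and vary $\hat f$ over the dense set of compactly supported functions to conclude~\eqref{UG}. Inserting this step makes your argument complete.
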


\begin{proof}
First we observe that, by \eqref{defgf}, $G_i(z,x,.)\in L^2(I, \C^2)$, $i=1,2$,
for every $x\in I$ and every $z\in\C\setminus\sigma(H)$. Moreover, we have
\[
(H-z)^{-1} f = U^{-1} \frac{1}{\lam-z} U f
\]
where the left-hand side is given by \eqref{gfres} and the right-hand side can be written as
\[
\lim_{c\uparrow b} \int_a^c \frac{\Phi(\lam,x)}{\lam-z} \hat{f}(\lam)d\rho(\lam).
\]
Hence both sides are equal in $L^2(I,\C^2)$ and hence in particular for almost every $x\in I$.
Moreover, if $\hat{f}$ has compact support we can drop the limit and both sides are continuous
with respect to $x$, showing equality for all $x\in I$ in this case. Since such $f$ are dense the
claim follows.
\end{proof}

Differentiating with respect to $z$, we get:

\begin{corollary}
We even have \label{corGdz}
\be\label{UGdz}
(U \partial_z^k G_{i}(z,x,.))(\lam) = \frac{k! \Phi_i(\lam,x)}{(\lam-z)^{k+1}}, \quad\quad\quad\text{i=1,2}
\ee
for every $x\in I$, $k \in \N_0$ and $z\in\C\setminus\sigma(H)$.
\end{corollary}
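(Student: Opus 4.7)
My plan is an induction on $k$, where the base case $k=0$ is precisely Lemma~\ref{lemUub}. The entire content of the inductive step is justifying that the $z$-differentiation commutes with the unitary transformation $U$, which in turn reduces to showing that, for each fixed $x \in I$, the map $z \mapsto G_i(z,x,\cdot)$ is holomorphic from $\C\setminus\sigma(H)$ into $L^2(I,\C^2)$.

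Concretely, fix $x \in I$ and $z_0 \in \C \setminus \sigma(H)$. Using the explicit representation \eqref{defgf} together with \eqref{defpsi}, the function $G_i(z,x,y)$ is a linear combination of $\Phi(z,x)$, $\Theta(z,x)$ and $M(z)\Phi(z,x)$ paired (depending on whether $y<x$ or $y>x$) against $\Phi(z,y)$ or $\Theta(z,y) + M(z)\Phi(z,y)$. Since $\Phi(z,\cdot)$ and $\Theta(z,\cdot)$ are entire in $z$ by Lemma~\ref{lem:pt} and $M(z)$ is analytic on $\C\setminus\sigma(H)$, the pointwise derivative $\partial_z G_i(z,x,y)$ exists for every $y\neq x$ and can be computed explicitly. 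The square-integrability of $\Phi(z,\cdot)$ near $a$ and of $\Psi(z,\cdot)$ near $b$, locally uniformly in $z$, then yields $L^2$-norm convergence of the difference quotients
\be
\frac{1}{h}\big(G_i(z_0+h,x,\cdot)-G_i(z_0,x,\cdot)\big)\longrightarrow\partial_z G_i(z_0,x,\cdot) \quad\text{in } L^2(I,\C^2),
\ee
and the limit agrees with the pointwise derivative. Iterating gives the $L^2$-valued holomorphy to all orders.

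Since $U$ is bounded, it interchanges with $L^2$-valued holomorphic differentiation, so
\be
U(\partial_z^k G_i(z,x,\cdot))=\partial_z^k\big(U G_i(z,x,\cdot)\big).
\ee
Applying Lemma~\ref{lemUub} to the right-hand side and differentiating the scalar analytic function $1/(\lambda-z)$ in $z$ a total of $k$ times yields $k!\,\Phi_i(\lambda,x)/(\lambda-z)^{k+1}$, which is \eqref{UGdz}.

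The only genuine work lies in verifying the $L^2$-holomorphy of $z\mapsto G_i(z,x,\cdot)$; this is the main obstacle, but it is entirely routine given the explicit form of $G$ and the local uniform analyticity of the Weyl solutions. An alternative, essentially equivalent, argument invokes the resolvent identity $\partial_z^k(H-z)^{-1}=k!\,(H-z)^{-(k+1)}$, recognizes $\partial_z^k G_i(z,x,\cdot)$ as (a column of) the kernel of $k!(H-z)^{-(k+1)}$, and uses that $U$ diagonalizes $H$ together with Lemma~\ref{lemUub} to read off \eqref{UGdz}.
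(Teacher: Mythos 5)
Your proof is correct and follows the same inductive strategy as the paper: the base case is Lemma~\ref{lemUub}, and the inductive step amounts to commuting $\partial_z$ with the spectral transform $U$. The paper phrases this commutation as differentiation under the integral defining $U$, while you package the same fact as $L^2$-valued holomorphy of $z\mapsto G_i(z,x,\cdot)$ together with boundedness of $U$; this is a somewhat cleaner justification of the identical key step.
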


\begin{proof}
We prove the claim by induction. For the case $k=0$, \eqref{UGdz} is just \eqref{UG}. For $k=1$  we have
\begin{align*}
\partial_z(UG_{i}(z,x,.))(\lam) = \partial_z \left(\frac{\Phi_i(\lam,x)}{\lam-z}\right) = \frac{\Phi_i(\lam,x)}{(\lam-z)^2},
\quad \quad \quad i=1,2,
\end{align*}
where
\begin{align*}
\partial_z(UG_{i}(z,x,.))(\lam) &=\int_a^b \Phi_1(\lam,y) \partial_z G_{i1}(z,x,y) +  \Phi_2(\lam,y) \partial_z G_{i2}(z,x,y)~dy \\
&= (U \partial_z G_{i}(z,x,.))(\lam), \quad \quad \quad i=1,2. 
\end{align*}
Now suppose \eqref{UGdz} holds for $k=n$. Then we have
\be \nn
\partial_z(U\partial_z^nG_{i}(z,x,.))(\lam) = \partial_z \left(\frac{n! \Phi_i(\lam,x)}{(\lam-z)^{n+1}}\right)
= \frac{(n+1)! \Phi_i(\lam,x)}{(\lam-z)^{n+2}}, \quad \quad \quad i=1,2
\ee
where $\partial_z(U\partial_z^nG_{i}(z,x,.))(\lam)=(U\partial_z^{n+1}G_{i}(z,x,.))(\lam)$, $i=1,2$
by performing the same computation as above with $U\partial_z^nG_{i}$ instead of $UG_{i}$.
Thus we have verified \eqref{UGdz} for every $k \in \N_0$.
\end{proof}

\begin{remark}\label{rem:uniq}
It is important to point out that a fundamental system $\Theta(z,x)$, $\Phi(z,x)$ of solutions is not unique and any other such system is given by
\begin{align}
\ti{\Theta}(z,x) & = \E^{-g(z)} \Theta(z,x) - f(z) \Phi(z,x), &
\ti{\Phi}(z,x) & = \E^{g(z)} \Phi(z,x),
\end{align}
where $f(z)$, $g(z)$ are entire functions with $f(z)$ real and $g(z)$ real modulo $\I\pi$.
The singular Weyl functions are related via
\begin{align}
\ti{M}(z) = \E^{-2g(z)} M(z) + \E^{-g(z)}f(z)
\end{align}
and the corresponding spectral measure is given by
\begin{align}
d\ti{\rho}(\lam) = \E^{-2g(\lam)} d\rho(\lam).
\end{align}
In particular, the two measures are mutually absolutely continuous and the associated spectral
transformations just differ by a simple rescaling with the positive function $\E^{-2g(\lam)}$.
\end{remark}

Next, the following integral representation shows that $M(z)$ can be reconstructed from $\rho$ up to an entire function.

\begin{theorem}[\cite{kst2}]\label{IntR}
Let $M(z)$ be a singular Weyl function and $\rho$ its associated spectral measure. Then there exists
an entire function $g(z)$ such that $g(\lam)\ge 0$ for $\lam\in\R$ and $\E^{-g(\lam)}\in L^2(\R, d\rho)$. 
Moreover, for any entire function $\hat{g}(z)$ such that $\hat{g}(\lam)>0$ for $\lam\in\R$ and $(1+\lam^2)^{-1} \hat{g}(\lam)^{-1}\in L^1(\R, d\rho)$
(e.g.\ $\hat{g}(z)=\E^{2g(z)}$) we have the integral representation
\begin{equation}\label{Mir}
M(z) = E(z) + \hat{g}(z) \int_\R \left(\frac{1}{\lam-z} - \frac{\lam}{1+\lam^2}\right) \frac{d\rho(\lam)}{\hat{g}(\lam)},
\qquad z\in\C\backslash\sig(H),
\end{equation}
where $E(z)$ is a real entire function.
\end{theorem}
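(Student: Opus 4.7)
The plan is to adapt the argument given in \cite{kst2} for the Schr\"odinger case, which transfers with only cosmetic changes once the spectral transform (Theorem~\ref{thm:sptr}) and Lemma~\ref{lemUub} are available. The proof splits naturally into the existence of $g$ and the integral representation for $M$.

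For the existence of $g$, I would pick $x_0 \in I$ such that the entire vector-valued function $\Phi(\cdot,x_0)$ is not identically zero, and then invoke Corollary~\ref{corGdz}: this yields $\Phi_i(\lambda,x_0)/(\lambda-z)^{k+1} \in L^2(\R,d\rho)$ for every $k\in\N_0$, which bounds the growth of $\rho$ by a power of the entire function $\Phi_i(\cdot,x_0)$. The construction of an entire majorant $g$ with $g|_\R \ge 0$ and $\E^{-g}\in L^2(\R,d\rho)$ is then a general fact about locally finite measures on $\R$, carried out in detail in \cite{kst2} via classical entire-function theory. Taking $\hat g=\E^{2g}$ and applying the Cauchy--Schwarz inequality shows that the integrability hypothesis on $\hat g$ in the second assertion of the theorem is satisfied by at least one choice.

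For the integral representation, given any $\hat g$ obeying the integrability condition, define
\[
F(z) = \hat g(z)\int_\R \left(\frac{1}{\lambda-z}-\frac{\lambda}{1+\lambda^2}\right)\frac{d\rho(\lambda)}{\hat g(\lambda)}.
\]
This is analytic on $\C\setminus\R$ and satisfies $F(z^*)=F(z)^*$. The inner integral is the Herglotz--Nevanlinna representation of a function with spectral measure $\rho/\hat g$; combining the Stieltjes--Perron inversion with the positivity of $\hat g$ on $\R$ yields $\pi^{-1}\im F(\lambda+\I 0) = d\rho/d\lambda$ in the distributional sense. By the Stieltjes--Liv\v{s}i\'{c} formula \eqref{defrho}, the same limit holds for $M$, so $\im(M-F)$ has vanishing distributional boundary value on $\R$. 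A Schwarz-type reflection (using $M(z^*)=M(z)^*$ and the analogous symmetry of $F$) then extends $M-F$ to an entire function $E(z)$, which is real since $M$ and $F$ both satisfy $f(z^*)=f(z)^*$.

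The main obstacle I anticipate is the existence step: constructing an entire $g$ whose exponential majorizes an arbitrary locally finite Borel measure is the technically heaviest part and rests on nontrivial classical results about entire majorants of measures on the line. Once $g$ is secured, the integral representation becomes essentially formal, requiring only a distributional comparison of boundary values across $\R$ and the two reality symmetries.
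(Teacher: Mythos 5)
The paper does not give its own proof of this theorem: it is quoted from \cite{kst2} and the opening of Section~\ref{sec:swm} states explicitly that the proofs transfer literally from \cite{kst2}. Your sketch follows the \cite{kst2} strategy (spectral transform $\Rightarrow$ integrability, construct an entire majorant $g$, then show that $M$ minus the regularized Cauchy transform of $\rho/\hat g$ extends to a real entire function), so the overall plan is consistent with what the authors intend.

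Two points deserve a closer look. First, the appeal to Corollary~\ref{corGdz} is not doing what you suggest. The $k\geq 1$ cases only give \emph{weaker} integrability (the denominator decays faster), so all the information is already in the $k=0$ case, i.e.\ Lemma~\ref{lemUub}: $\int |\Phi_i(\lambda,x_0)|^2 (1+\lambda^2)^{-1}\,d\rho(\lambda)<\infty$. Moreover, since $\Phi_i(\cdot,x_0)$ has real zeros, this does \emph{not} give a pointwise upper bound on $\rho$, and it is not what drives the existence of $g$. What one really uses is simply that $\rho$ is a locally finite Borel measure (which the spectral representation of Theorem~\ref{thm:sptr} gives directly); the classical entire-majorant construction in \cite{kst2} then applies. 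Also, no Cauchy--Schwarz is needed to pass from $\E^{-g}\in L^2(d\rho)$ to $(1+\lambda^2)^{-1}\E^{-2g}\in L^1(d\rho)$ --- this is just pointwise domination.

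Second, the step you call ``essentially formal'' --- extending $M-F$ across $\R$ by Schwarz reflection --- is actually where the technical content lies. Vanishing of the distributional boundary values of $\im(M-F)$ together with the reality symmetry is \emph{not} by itself enough to invoke reflection/Morera: one also needs some control of $M-F$ near $\R$ (for instance, local integrability or local boundedness after cancellation of simple poles at eigenvalues, since neither $M$ nor $F$ is Herglotz and the usual a priori estimates are unavailable). One has to check that the residues of $M$ and of $F$ at each point mass of $\rho$ coincide (they do, both equal $-\rho(\{\lambda_0\})$), and that the remaining difference is locally integrable near $\supp(\rho)$. This is precisely the part of the \cite{kst2} argument that your sketch defers without naming what is needed, and it should be spelled out before the reflection principle can be invoked.
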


\begin{remark}\label{rem:herg}
Choosing a real entire function $g(z)$ such that $\exp(-2g(\lam)) \in L^1(\R, d\rho)$ we see that
\begin{align}
M(z) = \E^{2g(z)} \int_\R \frac{1}{\lam-z} \E^{-2g(\lam)}d\rho(\lam) - E(z)
\end{align}
for some real entire function $E(z)$.
Hence if we choose $f(z) = \exp(-g(z)) E(z)$ and switch to a new system of solutions as in Remark~\ref{rem:uniq},
then we see that the new singular Weyl function is a Herglotz--Nevanlinna function
\begin{align}
 \ti{M}(z) = \int_\R \frac{1}{\lam-z} \E^{-2g(\lam)}d\rho(\lam).
\end{align}
\end{remark}

As another consequence we get a criterion when our singular Weyl function is a
generalized Nevanlinna function with no nonreal poles and the only generalized pole of nonpositive type at $\infty$.
We will denote the set of all such generalized Nevanlinna functions by $N_\kappa^\infty$.

\begin{theorem}[\cite{kst2}]\label{thm:nkap}
Fix the solution $\Phi(z,x)$. Then there is a corresponding solution $\Theta(z,x)$ such that $M(z)\in N_\kappa^\infty$
for some $\kappa\le k$ if and only if $(1+\lam^2)^{-k-1} \in L^1(\R,d\rho)$. Moreover, $\kappa=k$ if $k=0$ or
$(1+\lam^2)^{-k} \not\in L^1(\R,d\rho)$.
\end{theorem}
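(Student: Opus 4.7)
The plan is to combine the integral representation of Theorem~\ref{IntR} with the freedom to modify $\Theta(z,x)$ exposed in Remark~\ref{rem:uniq} and with the Krein--Langer representation theorem for the class $N_\kappa^\infty$.

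For the sufficient direction, assume $(1+\lam^2)^{-k-1}\in L^1(\R,d\rho)$ and apply Theorem~\ref{IntR} with $\hat g(z)=(1+z^2)^k$, whose admissibility condition $(1+\lam^2)^{-1}\hat g(\lam)^{-1}=(1+\lam^2)^{-k-1}\in L^1(\R,d\rho)$ is precisely the hypothesis. This yields
\begin{equation*}
M(z) = E(z) + (1+z^2)^k \int_\R \left(\frac{1}{\lam-z} - \frac{\lam}{1+\lam^2}\right) \frac{d\rho(\lam)}{(1+\lam^2)^k}
\end{equation*}
with $E$ real entire; the integral is Herglotz--Nevanlinna because the positive measure $(1+\lam^2)^{-k}d\rho$ obeys the corresponding moment condition. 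Next, with $g\equiv 0$ and $f(z)=-E(z)$ in Remark~\ref{rem:uniq}, I would pass to $\tilde\Theta(z,x)=\Theta(z,x)+E(z)\Phi(z,x)$: this leaves $\Phi$ unchanged and gives the new singular Weyl function $\tilde M(z)=M(z)-E(z)$, i.e., $(1+z^2)^k$ times a Nevanlinna function, which belongs to $N_\kappa^\infty$ for some $\kappa\le k$ by the Krein--Langer representation.

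For the necessary direction, suppose $M\in N_\kappa^\infty$ with $\kappa\le k$. The Krein--Langer representation theorem produces a positive Borel measure $\sigma$ with $\int(1+t^2)^{-1}d\sigma(t)<\infty$ and a real polynomial $p$ of degree at most $2\kappa+1$ such that
\begin{equation*}
M(z) = p(z) + (1+z^2)^\kappa \int_\R \left(\frac{1}{\lam-z} - \frac{\lam}{1+\lam^2}\right) d\sigma(\lam).
\end{equation*}
Since $(1+z^2)^\kappa$ is real on $\R$ and $p$ contributes nothing to the boundary imaginary part, the Stieltjes--Liv\v{s}i\'{c} formula \eqref{defrho} identifies $d\rho(\lam)=(1+\lam^2)^\kappa d\sigma(\lam)$. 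Hence $\int(1+\lam^2)^{-\kappa-1}d\rho=\int(1+\lam^2)^{-1}d\sigma<\infty$, and $\kappa\le k$ yields $(1+\lam^2)^{-k-1}\in L^1(\R,d\rho)$.

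The minimality assertion follows by contradiction: if $\kappa\le k-1$ were admissible, the necessary direction just established would force $(1+\lam^2)^{-k}\in L^1(\R,d\rho)$, contradicting the standing hypothesis; for $k=0$ the statement is the classical Nevanlinna condition. The one nontrivial external ingredient is the Krein--Langer representation of $N_\kappa^\infty$ together with the accompanying identification of spectral measures; once these are invoked, the argument parallels that of \cite{kst2} essentially verbatim.
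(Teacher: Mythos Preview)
Your argument is correct and matches the strategy of \cite{kst2}, which is precisely what the present paper invokes for this theorem (no independent proof is given here). The combination of the integral representation in Theorem~\ref{IntR} with $\hat g(z)=(1+z^2)^k$, the gauge freedom of Remark~\ref{rem:uniq} to remove the entire part $E(z)$, and the Krein--Langer characterization of $N_\kappa^\infty$ is exactly the intended route; your treatment of the minimality clause via the contrapositive of the necessary direction is also the standard one.
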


In order to identify possible values of $k$ one can try to bound $\lam^{-k}$ by
linear combinations of $\Phi_1(\lam,x)^2$ and $\Phi_2(\lam,x)^2$ which are in $L^1(\R, (1+\lam^2)^{-1}d\rho)$ by
Lemma~\ref{lemUub}.

As a final ingredient we will need the following simple lemma on high energy asymptotics of our real entire solution $\Phi(z,x)$.

\begin{lemma}
If $\Phi(z,x)$ is a real entire solution which lies in the domain of $H$ near $a$, then for every $x_0$, $x\in I$
\be\label{IOphias}
\Phi(z,x) = \Phi(z,x_0) \E^{-\I (x-x_0) z + \I \int_{x_0}^x q_{\rm el}(r)dr} (1 + \oo(1)),
\ee
as $\im(z)\to\infty$.
\end{lemma}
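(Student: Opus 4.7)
The plan is to view the Dirac equation $\tau u = z u$ as a perturbation of the diagonalizable first-order system $u' = \I (z - q_{\rm el})\sig_2 u$. Writing
\begin{equation*}
u'(x) = \I(z - q_{\rm el}(x))\sig_2 u(x) - \I \sig_2 \ti{Q}(x) u(x), \qquad \ti{Q} := q_{\rm am}\sig_1 + (m + q_{\rm sc})\sig_3,
\end{equation*}
the fundamental matrix of the leading part reads
\begin{equation*}
F(z, x, y) = \exp\bigl(\I\,\theta(x, y)\,\sig_2\bigr) = \E^{\I\theta(x, y)} P_+ + \E^{-\I\theta(x, y)} P_-,
\end{equation*}
where $\theta(x, y) := z(x - y) - \int_y^x q_{\rm el}(r)dr$ and $P_\pm := \tfrac{1}{2}(\id \pm \sig_2)$ are the spectral projectors of $\sig_2$ onto its eigenvectors $(1, \pm\I)^T$. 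Then $\Phi$ obeys the Volterra integral equation
\begin{equation*}
\Phi(z, x) = F(z, x, x_0)\Phi(z, x_0) + \int_{x_0}^x F(z, x, y)(-\I\sig_2)\ti{Q}(y)\Phi(z, y)\,dy.
\end{equation*}

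For $\im z \to \infty$ and $x > y$, $|\E^{\I\theta(x, y)}| = \E^{-(x-y)\im z}$ decays while $|\E^{-\I\theta(x, y)}|$ grows, so $F(z, x, y) = \E^{-\I\theta(x, y)} P_-(1 + \oo(1))$. A standard Gronwall-type iteration of the Volterra equation (using $\ti{Q} \in L^1_{\rm loc}$) then bounds the integral remainder by $\OO(1/\im z)$ relative to the leading term. Since $-\I\theta(x, x_0) = -\I(x-x_0)z + \I\int_{x_0}^x q_{\rm el}(r)dr$, this yields the intermediate asymptotic
\begin{equation*}
\Phi(z, x) = \E^{-\I(x-x_0)z + \I\int_{x_0}^x q_{\rm el}(r)dr}\, P_-\Phi(z, x_0)\,(1 + \oo(1)).
\end{equation*}

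The main obstacle is to replace $P_-\Phi(z, x_0)$ by $\Phi(z, x_0)$ itself, equivalently to show that $P_+\Phi(z, x_0) = \oo(\Phi(z, x_0))$ as $\im z \to \infty$. In the regular case at $a$, this would follow by applying the same Volterra representation on $(a, x_0)$ with $\Phi(z, a) = \gamma(z) w$ for some entire $\gamma$ and a real vector $w$: the splitting $w = P_+ w + P_- w$ then propagates forward with exponentials $\E^{\pm\I\theta(x_0, a)}$ of opposite growth, so that $|P_+\Phi(z, x_0)|$ is exponentially smaller than $|P_-\Phi(z, x_0)|$ by the factor $\E^{-2(x_0 - a)\im z}$. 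In the singular limit-point case, $\Phi(z, \cdot)$ is proportional to the Weyl solution $u_-(z, \cdot)$ at $a$, and the desired alignment has to be read off from the high-energy asymptotics of $u_-$ (whose derivation is itself an instance of the same Volterra argument run near $a$). The case $x < x_0$ is handled by the analogous argument, the roles of $P_+$ and $P_-$ being interchanged on the segment from $x_0$ back to $x$.
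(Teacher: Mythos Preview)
Your Volterra-iteration approach is sound and in fact reproduces the asymptotics of the fundamental solutions $c(z,x)$, $s(z,x)$ that the paper simply quotes. The paper's proof, however, is organized differently: it writes $\Phi(z,x)=\Phi_1(z,c)\bigl(c(z,x)-m_-(z)s(z,x)\bigr)$ (and the analogous formula for the second component), inserts the known asymptotics for $c$, $s$, and invokes the Weyl-function asymptotic $m_-(z)=\I+\oo(1)$ as a black box. Your ``main obstacle'' --- showing $P_+\Phi(z,x_0)=\oo(\Phi(z,x_0))$ --- is \emph{literally} this last fact: since $\Phi(z,c)=\Phi_1(z,c)\,(1,-m_-(z))^T$ and $P_+(1,-m_-)^T=\tfrac12(1+\I m_-)(1,\I)^T$, the alignment is equivalent to $m_-(z)\to\I$. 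So the two proofs have the same content; the paper just factors through the Weyl function, which makes the singular case transparent.

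Where your write-up is shaky is precisely the singular case. You say the alignment ``is itself an instance of the same Volterra argument run near $a$'', but if $a$ is limit point the potential is not integrable there and the Volterra iteration does not converge on $(a,x_0)$. The correct route is the nesting Weyl-disk argument: for any interior point $d\in(a,c)$ the value $m_-(z)$ lies in the Weyl disk computed from $c(z,\cdot)$, $s(z,\cdot)$ on the \emph{compact} interval $[d,c]$, and your Volterra estimates on that interval show the disk shrinks to $\{\I\}$ as $\im z\to\infty$. This is independent of the behaviour at $a$ and is what the paper is citing. A minor point: with only $\ti Q\in L^1_{\rm loc}$ the iteration gives a remainder of size $\oo(1)$, not $\OO(1/\im z)$; fortunately the lemma only claims $\oo(1)$.
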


\begin{proof}
Using
\[
\Phi_1(z,x) = \Phi_1(z,c) ( c_1(z,x) - m_-(z) s_1(z,x) )
\]
and the well-known asymptotics (cf.\ \cite{cg}, \cite[Lemma~7.2.1]{ls} --- in fact this also follows as a special case from
Lemma~\ref{lem:asphi} below)
\begin{align*}
c(z,x) &= \begin{pmatrix}\cos(z (x-c) - \int_c^x q_{\rm el}(r)dr) \\ -\sin(z (x-c) - \int_c^x q_{\rm el}(r)dr) \end{pmatrix} + \oo\big(\E^{|\im(z)|(x-c)}\big),\\
s(z,x) &= \begin{pmatrix}\sin(z (x-c) - \int_c^x q_{\rm el}(r)dr) \\ \cos(z (x-c) - \int_c^x q_{\rm el}(r)dr) \end{pmatrix} + \oo\big(\E^{|\im(z)|(x-c)}\big)
\end{align*}
for $x>c$ and
\[
m_-(z) = \I + \oo(1)
\]
we see~\eqref{IOphias} for $x_0=c$ and $x>c$. The second component follows similarly from
$\Phi_2(z,x) = \Phi_2(z,c) ( s_2(z,x) - m_-(z)^{-1} c_2(z,x) )$. The case $x<x_0$ follows after reversing the roles of $x_0$ and $x$.
Since $c$ is arbitrary, the proof is complete.
\end{proof}

\section{Supersymmetry}

In this section we want to establish the connection with the standard theory for one-dimensional Schr\"odinger operators
(cf.\ \cite{kst2}) if our Dirac operator is supersymmetric, that is, $q_{\rm el}= q_{\rm sc}=0$. In this case we can write our operator
as
\be\label{eq:dsusy}
H = \begin{pmatrix}
m & A_q\\ A_q^* & -m
\end{pmatrix}
\ee
where
\begin{align}\begin{split}
A_q f &= a_q f, \qquad a_q = - \frac{d}{dx} + q_{\rm am}(x), \\
\dom(A_q) &= \{ f\in L^2(a,b) \,|\, f\in AC_{loc}(a,b),\: a_q f \in L^2(a,b) \}.
\end{split}\end{align}
Here we use $A_q$ and $a_q$ for the operator and differential expression, respectively.
It is straightforward to check (cf.\ \cite[Problem~9.3]{tschroe}) that $A_q$ is closed and that its adjoint
is given by
\begin{align}\begin{split}
A_q^* f &=  a_q^* f, \qquad a_q^* = \frac{d}{dx} + q_{\rm am}(x), \\
\dom(A_q^*) & = \{ f\in L^2(a,b) \,|\,  f\in AC_{loc}(a,b),\: a_q^* f \in L^2(a,b),\\
&\qquad\qquad\qquad\qquad\qquad\qquad \lim_{x\to a,b} f(x) g(x) =0, \forall g\in\dom(A_q) \}.\label{eq:a*}
\end{split}\end{align}
In particular, our operator $H$ is self-adjoint. Note that if $\tau$ is in the limit point case at $a$ (or $b$), then
the boundary conditions in \eqref{eq:a*} hold automatically at the corresponding endpoint. However, in the limit circle case the operator $H$ is associated with a
specific boundary condition (for instance, in the case when both endpoint are regular, the boundary conditions in \eqref{eq:a*} are precisely the Dirichlet conditions). A straightforward computation verifies
\be\label{eqhsq}
H^2 = \begin{pmatrix}
A_q A_q^* +m^2 & 0\\ 0 & A_q^*A_q + m^2
\end{pmatrix}.
\ee
Here, $A_q^* A_q$ and $A_qA_q^*$ are generalized one-dimensional Schr\"odinger operators
of the type considered in \cite{egnt1,egnt2}.

Note that in this case $\tau u = z u$ is equivalent to
\be
a_q a_q^* u_1 = (z^2-m^2) u_1, \qquad u_2 = (z+m)^{-1} a_q^* u_1,
\ee
as well as
\be
a_q^* a_q u_2 = (z^2-m^2) u_2, \qquad u_1 = (z-m)^{-1} a_q u_2.
\ee
By spectral mapping, \eqref{eqhsq} implies that Hypothesis~\ref{hyp:gen} will hold if and only
if the corresponding hypothesis holds for $A_q^* A_q$ (or $A_qA_q^*$). Consequently,
Theorem~8.4 from \cite{egnt2} implies that there is a system of entire solutions $\phi(\zeta,x)$,
$\theta(\zeta,x)$ for $(a_q a_q^*-\zeta)y=0$ such that $\phi(\zeta,x)$ is in the domain of $A_qA_q^*$ near $a$. One easily checks that $a_q^* \phi(\zeta,x)$,
$\zeta^{-1} a_q^* \theta(\zeta,x)$ is a corresponding system for $(a_q^* a_q-\zeta)y=0$. Thus
\be\label{eq:ptsusy}
\Phi(z,x)= \begin{pmatrix} (z+m) \phi(z^2-m^2,x)\\ a_q^*\phi(z^2-m^2,x) \end{pmatrix}, \quad
\Theta(z,x)= \begin{pmatrix} \theta(z^2-m^2,x)\\ \frac{1}{z+m}a_q^* \theta(z^2-m^2,x) \end{pmatrix}
\ee
is a corresponding system for our Dirac operator $H$. Note that since $a_q^* \theta(0,x) =0$ the solution
$\Theta(z,x)$ is indeed entire. Moreover, by Theorem~3.4 of \cite{kst3},
\be
\Psi(z,x)= \Theta(z,x) + \frac{m_q(z^2-m^2)}{z+m} \Phi(z,x),
\ee
lies in the domain of $H$ near $b$ (here $m_q(\zeta)$ is the singular Weyl function of $A_q A_q^*$).
Note that while in \cite{kst3} we assumed $q_{\rm am} \in AC(a,b)$ the results extend to the present
situation in a straightforward manner.

We summarize our main findings from this section in the following theorem.

\begin{theorem}
Let our Dirac operator $H$ be given by \eqref{eq:dsusy} and suppose the spectrum of the Schr\"odinger-type operator
$A_q A_q^*$ is purely discrete when restricted to $(a,c)$. Then $H$ satisfies Hypothesis~\ref{hyp:gen} and the
singular Weyl function associated with the fundamental system \eqref{eq:ptsusy} is given by
\be
M(z)=\frac{m_q(z^2-m^2)}{z+m},
\ee
where $m_q(\zeta)$ is the singular Weyl function of $A_q A_q^*$.
\end{theorem}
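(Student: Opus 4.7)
My plan is to verify the hypothesis, verify that \eqref{eq:ptsusy} is an admissible real entire fundamental system, and then recognize the Weyl solution near $b$ as the lift of the Schrödinger Weyl solution. For Hypothesis~\ref{hyp:gen}, I would argue directly rather than through the operator $H^2$. An eigenfunction $f$ of $H^D_{(a,c)}$ at eigenvalue $\lambda$ has first component satisfying $A_qA_q^{*}f_1=(\lambda^2-m^2)f_1$ with Dirichlet condition $f_1(c)=0$ and $f_1\in L^2(a,c)$, so $\lambda^2-m^2$ belongs to the (by assumption discrete) Dirichlet spectrum of $A_qA_q^{*}$ on $(a,c)$; conversely each such eigenvalue $\zeta_n$ produces at most two Dirac eigenvalues $\lambda=\pm\sqrt{\zeta_n+m^2}$ via the lift $(u_n,(\lambda+m)^{-1}a_q^{*}u_n)$. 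Hence $\sigma(H^D_{(a,c)})$ is purely discrete and Lemma~\ref{lem:pt} supplies a real entire fundamental system.

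Next I would check the properties of $\Theta,\Phi$ defined by \eqref{eq:ptsusy}. A direct computation shows that any $u_1$ with $a_qa_q^{*}u_1=(z^2-m^2)u_1$ paired with $u_2=(z+m)^{-1}a_q^{*}u_1$ solves $\tau u=zu$, so $\Theta,\Phi$ are indeed solutions. Entirety of $\Phi$ is immediate; for $\Theta$ the apparent pole at $z=-m$ is cancelled by the stated identity $a_q^{*}\theta(0,\cdot)\equiv 0$. A Wronskian computation gives
\begin{align*}
W(\Theta,\Phi)=\Theta_1\Phi_2-\Theta_2\Phi_1=\theta\,a_q^{*}\phi-\phi\,a_q^{*}\theta=\theta\phi'-\phi\theta'=1,
\end{align*}
where the last step uses the Schrödinger normalization of $\theta,\phi$ inherited from \cite{egnt2}. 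That $\Phi(z,\cdot)$ lies in $\dom(H)$ near $a$ is inherited from $\phi(\zeta,\cdot)\in\dom(A_qA_q^{*})$: the $L^2$ membership of $\Phi_1=(z+m)\phi$ and $\Phi_2=a_q^{*}\phi$ near $a$ follows from $\phi\in\dom(A_qA_q^{*})\subseteq\dom(A_q^{*})$, and the boundary form in \eqref{eq:a*} translates into the Wronskian condition $W_a(u_-,\Phi)=0$ in \eqref{domH}.

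For the identification of $M(z)$, Theorem~3.4 of \cite{kst3} provides the Schrödinger singular Weyl solution $\psi_q(\zeta,x)=\theta(\zeta,x)+m_q(\zeta)\phi(\zeta,x)$ lying in $\dom(A_qA_q^{*})$ near $b$. Lifting via $u_1(z,x)=\psi_q(z^2-m^2,x)$ and $u_2=(z+m)^{-1}a_q^{*}u_1$ produces a solution of $\tau u=zu$ in $\dom(H)$ near $b$. Using $\theta=\Theta_1$, $a_q^{*}\theta=(z+m)\Theta_2$, $\phi=\Phi_1/(z+m)$, $a_q^{*}\phi=\Phi_2$ to rewrite both components yields
\begin{align*}
u(z,x)=\Theta(z,x)+\frac{m_q(z^2-m^2)}{z+m}\,\Phi(z,x),
\end{align*}
and comparison with \eqref{defpsi} delivers the formula $M(z)=m_q(z^2-m^2)/(z+m)$.

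The main obstacle I anticipate is the rigorous verification that domain and, in the limit circle case, boundary-condition properties at the singular endpoints lift cleanly between the Schrödinger and Dirac pictures; in particular, matching the boundary form in \eqref{eq:a*} with the Wronskian conditions in \eqref{domH} for both $\Phi$ near $a$ and the lifted Weyl solution near $b$. This is precisely the point where the passing remark in the text that results from \cite{kst3} ``extend to the present situation in a straightforward manner'' must be substantiated. Once that translation is in place, everything else reduces to the Wronskian algebra sketched above.
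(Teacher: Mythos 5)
Your proposal is correct and follows essentially the same route as the paper: construct the real entire fundamental system by lifting the Schr\"odinger system from \cite{egnt2}, and identify $M(z)$ by lifting the Weyl solution from \cite{kst3}. The Wronskian computation, the cancellation of the apparent pole of $\Theta$ at $z=-m$ via $a_q^*\theta(0,\cdot)=0$, and the algebra leading to $M(z)=m_q(z^2-m^2)/(z+m)$ match the paper's discussion preceding the theorem statement.

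The one place you diverge is the verification of Hypothesis~\ref{hyp:gen}. The paper argues via the spectral mapping of $(H^D_{(a,c)})^2$ into a direct sum of Schr\"odinger-type operators, which transfers compactness of the resolvent (hence pure discreteness of the spectrum) directly. Your eigenvalue-level correspondence $\lambda\leftrightarrow\lambda^2-m^2$ establishes a bijection of point spectra but, as stated, does not by itself exclude essential spectrum for $H^D_{(a,c)}$; one still needs a resolvent-level argument (or an appeal to the structure of 1D ODE operators, or the route via Lemma~\ref{lem:pt}(ii): once $\Phi(z,x)$ is built by the lift and shown to lie in $\dom(H)$ near $a$, discreteness follows automatically). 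Since you do construct $\Phi$ via \eqref{eq:ptsusy} and verify its membership in $\dom(H)$ near $a$, the latter route closes the gap and is arguably the cleanest way to phrase it.

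You also correctly flag the real technical burden, shared with the paper's own argument: the translation between the boundary form in \eqref{eq:a*} for $A_q^*$ and the Wronskian boundary conditions in \eqref{domH}, both for $\Phi$ near $a$ and for the lifted Weyl solution near $b$, together with the extension of \cite[Theorem~3.4]{kst3} beyond $q_{\rm am}\in AC(a,b)$. The paper leaves that to a passing remark; your proof would need to fill it in, but your identification of the issue is exactly right.
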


We will use this connection to investigate an illustrative example in the next section.

\section{An example: The unperturbed radial Dirac operator}\label{sec4}

In this section we completely solve a prototypical example of a Dirac operator with two singular endpoints, namely
the unperturbed radial Dirac operator. We will use this explicit example to illustrate
some results from the foregoing sections. Additional information about the radial Dirac operator
mentioned in this section can be found in \cite{baev,gtv,th}. Explicitly, we look at the case
where the interval is the positive half-axis and the potential $Q(x)$ is given by
\begin{equation}\label{eq:Qkap}
Q_\kappa(x)= \begin{pmatrix}m & \frac{\kappa}{x} \\
\frac{\kappa}{x}  &-m\end{pmatrix}, \qquad x\in(0,\infty).
\end{equation}
As the case $\kappa<0$ can be reduced to the case $\kappa>0$ by the simple gauge transformation
$- \sigma_1 \tau \sigma_1$, we restrict our attention to the case $\kappa\ge 0$.

This particular Dirac operator is of the type considered in the previous section and hence can be reduced to the analysis of
the Bessel equation
\be\label{eq:bes}
-u''+\frac{l(l+ 1)}{x^2}u=\zeta u,\quad l\ge -\frac{1}{2}.
\ee
For the analysis of this equation in the context of singular Weyl--Titchmarsh--Kodaira theory we refer to (e.g.) \cite{kst}.
Here we just state the relevant results. Recall that a particular fundamental system of entire solutions of \eqref{eq:bes} satisfying
\be \label{a0}
W(\theta_l(\zeta),\phi_l(\zeta))=1
\ee
is given by
\be\label{defphil}
\phi_l(\zeta,x) = 
\zeta^{-\frac{2l+1}{4}} \sqrt{\frac{\pi x}{2}} J_{l+\frac{1}{2}}(\sqrt{\zeta} x),
\ee
\be\label{defthetal}
\theta_l(\zeta,x) = -
\zeta^{\frac{2l+1}{4}} \sqrt{\frac{\pi x}{2}} \begin{cases}
\frac{-1}{\sin((l+\frac{1}{2})\pi)} J_{-l-\frac{1}{2}}(\sqrt{\zeta} x), & {l+\frac{1}{2}} \in \R_+\setminus \N_0,\\
Y_{l+\frac{1}{2}}(\sqrt{\zeta} x) -\frac{1}{\pi}\log(\zeta) J_{l+\frac{1}{2}}(\sqrt{\zeta} x), & {l+\frac{1}{2}} \in\N_0,\end{cases}
\ee
where $J_{\nu}$ and $Y_{\nu}$ are the usual Bessel and Neumann functions \cite{dlmf}.
All branch cuts are chosen along the negative real axis unless explicitly stated otherwise.
If $\nu$ is an integer they of course reduce to spherical Bessel and Neumann functions
and can be expressed in terms of trigonometric functions (cf.\ e.g.\ \cite{dlmf}, \cite[Section~10.4]{tschroe}).
Finally, the Weyl solution is given by
\be
\psi_l(\zeta,x)= \theta_l(\zeta,x) + m_l(\zeta) \phi_l(\zeta,x)=
\I \sqrt{\frac{\pi x}{2}} (\I \sqrt{-\zeta})^{l+\frac{1}{2}} H_{l+\frac{1}{2}}^{(1)}(\I\sqrt{-\zeta}x)
\ee
and the singular Weyl function is
\be\label{eq:II.11}
m_l(\zeta) = \begin{cases}
\frac{-1}{\sin((l+\frac{1}{2})\pi)} (-\zeta)^{l+\frac{1}{2}}, & {l+\frac{1}{2}}\in\R_+\setminus \N_0,\\
\frac{-1}{\pi} z^{l+\frac{1}{2}}\log(-\zeta), & {l+\frac{1}{2}} \in\N_0,\end{cases}
\ee
where 
$H_{l+1/2}^{(1)} = J_{l+1/2} + \I Y_{l+1/2}$ are the Hankel functions of the first kind.

Using these formulae, and abbreviating
\begin{align}
a_\kappa^*  =\frac{d}{dx} +\frac{\kappa}{x}, \qquad \zeta  =z^2-m^2,
\end{align}
we immediately obtain the regular radial solution
\be\label{eq:phi_dirac}
\Phi_\kappa(z,x)=
\begin{pmatrix}
(z+m)\phi_\kappa(\zeta,x)\\
a_\kappa^*\phi_{\kappa}(\zeta,x)
\end{pmatrix}
\ee
and the singular radial solution
\be\label{eq:theta_dirac}
\Theta_\kappa(z,x)=
\begin{pmatrix}
\theta_\kappa(\zeta,x)\\
\frac{1}{z+m}a_\kappa^*\theta_{\kappa}(\zeta,x)
\end{pmatrix}.
\ee
Using \cite[formulas (5.5.3), (10.6.2)]{dlmf}, we obtain
\be\label{recphi}
a_\kappa^* \phi_\kappa(\zeta,x) = 
\zeta^{-\frac{2\kappa-1}{4}} \sqrt{\frac{\pi x}{2}} J_{\kappa-\frac{1}{2}}(\sqrt{\zeta} x)
=\begin{cases}
 \phi_{\kappa-1}(\zeta,x), & \kappa\ge \frac{1}{2},\\
\cos(\pi\kappa)\theta_{-\kappa}(\zeta,x), & \kappa\in [0,\frac{1}{2}),
 \end{cases}
\ee
as well as
\be\label{rectheta}
a_\kappa^*\theta_\kappa(\zeta,x)
=\begin{cases}
  \zeta
  \theta_{\kappa-1}(\zeta,x), & \kappa\ge \frac{1}{2},\\
\frac{\zeta}{\cos(\pi\kappa)}
\phi_{-\kappa}(\zeta,x), & \kappa\in [0,\frac{1}{2}).
 \end{cases}
\ee
By construction we have
\begin{align}
W_x(\Theta_\kappa(z),\Phi_k(z)) = 1
\end{align}
and our singular Weyl function defined by
\begin{align}
\Psi_\kappa(z,x)=\Theta_\kappa(z,x)+M_\kappa(z)\Phi_\kappa(z,x)\in L^2((1,\infty),\C^2)
\end{align}
is given by
\begin{align}
M_\kappa(z)=\frac{1}{z+m}m_\kappa(z^2-m^2),\quad z\in\C\setminus(-\infty,-m]\cup[m,\infty).
\end{align}
The associated spectral measure is given by
\be
d\rho_\kappa(\lam) = 
\chi_{(-\infty,-m]\cup[m,\infty)}(\lam) \frac{|\lam^2-m^2|^{\kappa+1/2}}{|\lam|+m} \frac{d\lam}{\pi}.
\ee
Furthermore, one infers that $M_\kappa(z)$ is in the generalized Nevanlinna class $N_{\kappa_0}^\infty$ with $\kappa_0=\floor{\kappa + 1/2}$.

\section{The limit circle case}

In this section we are going to extend \cite[Appendix A]{kst2} to the case of one-dimensional
Dirac operators. 
More precisely, we show that whenever $\tau$ is in the limit circle case at $a$, we may introduce a particular fundamental system such that the corresponding singular Weyl function is a Herglotz--Nevanlinna function. 
The proofs only require straightforward adaptations and we hence omit them here (details can be found in \cite{brun}).

To this end, we start with a Hypothesis which will turn out to be
equivalent to the claim that $\tau$ is in the limit circle case at $a$.
\begin{hypothesis}\label{hyp:limit} Fix $\lam_0 \in \R$ and suppose that $\Phi_0(x)$ and $\Theta_0(x)$ 
are two real-valued solutions of $\tau u =\lam_0 u$
which satisfy $W(\Theta_0, \Phi_0)=1$. Assume that the limits
\be \label{wronskilimits}
\lim_{x \rightarrow a} W_x(\Phi_0, u(z)) \qquad \text{and} \qquad \lim_{x \rightarrow a} W_x(\Theta_0, u(z))
\ee
exist for every solution $u(z,x)$ of $\tau u = z u$.
\end{hypothesis}

\begin{remark} \label{hypindependent}
Hypothesis \ref{hyp:limit} is independent of the choice of $\lam_0\in\R$.
\end{remark}

Indeed, let $\Phi_1(x)$ and $\Theta_1(x)$ be two real-valued solutions of $\tau u = \lam_1 u$ for some
$\lam_1 \in \R$ which satisfy $W(\Theta_1, \Phi_1)=1$. Setting $f_1=\Phi_0(x)$,
$f_2 = \Phi_1(x)$, $f_3=\Theta_0(x)$ and $f_4=u(z,x)$ in the Pl\"ucker identity
\be \label{pluecker}
W_x (f_1,f_2)W_x(f_3, f_4) + W_x(f_1, f_3)W_x(f_4, f_2) + W_x(f_1, f_4) W_x(f_2, f_3) = 0
\ee
and using $W(\Theta_0, \Phi_0)=1$ yields
\be \label{lambda:ind}
W_x(\Phi_1, u(z))= W_x(\Phi_0,\Phi_1)W_x (\Theta_0, u(z)) - W_x(\Phi_0, u(z)) W_x(\Theta_0, \Phi_1).
\ee
The Pl\"ucker identity \eqref{pluecker} remains valid in the limit $x \rightarrow a$. If
Hypothesis \ref{hyp:limit} holds, the limit $\lim_{x\rightarrow a}W_x(\Phi_1, u(z))$ exists, as then
all limits on the right-hand side of \eqref{lambda:ind} exist.
To see that $\lim_{x\rightarrow a}W_x(\Theta_1, u(z))$ exists as well, one just needs to replace
$\Phi_1(x)$ by $\Theta_1(x)$ in the above calculation.
Altogether we have shown that, if Hypothesis~\ref{hyp:limit} holds for one $\lam_0 \in \R$, then it also holds for
any other $\lam_1 \in \R$ which justifies Remark~\ref{hypindependent}.

\begin{lemma} \label{limitcircle-hyp}
If $\tau$ is in the limit circle case at $a$, then Hypothesis \ref{hyp:limit} holds. In this case, the limits
\eqref{wronskilimits} are holomorphic with respect to $z$ whenever $u(z,x)$ is.
\end{lemma}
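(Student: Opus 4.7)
My plan is to deduce the statement directly from Green's formula for the Dirac Wronskian, combined with the classical fact that $\tau$ being in the limit circle case at $a$ means every solution of $\tau u = zu$ is square integrable near $a$ for every $z\in\C$ (see e.g.\ \cite{wei2}).

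First, I would derive the Lagrange-type identity
\[
\frac{d}{dx} W_x(u_1,u_2) = (z_1-z_2)(u_{1,1}u_{2,1}+u_{1,2}u_{2,2})
\]
whenever $\tau u_j = z_j u_j$. This is a one-line computation using $u' = \I\sigma_2(z-Q)u$, $\sigma_2^T = -\sigma_2$ and $Q^T=Q$. Integrating from an interior point $d$ to a fixed $c\in I$ yields
\[
W_d(u_1,u_2) = W_c(u_1,u_2) + (z_2-z_1)\int_d^c (u_{1,1}u_{2,1}+u_{1,2}u_{2,2})\,dy.
\]

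Setting $(u_1,z_1)=(\Phi_0,\lam_0)$ (resp.\ $(\Theta_0,\lam_0)$) and $(u_2,z_2)=(u(z),z)$, the limit circle assumption gives $\Phi_0,\Theta_0,u(z,\cdot)\in L^2((a,c),\C^2)$, so the integrand is in $L^1((a,c))$ by Cauchy--Schwarz and the integral converges as $d\downarrow a$. This yields existence of the limits in \eqref{wronskilimits} together with the explicit formula
\[
\lim_{x\to a}W_x(\Phi_0,u(z)) = W_c(\Phi_0,u(z)) + (z-\lam_0)\int_a^c \bigl(\Phi_{0,1}u_1(z,y)+\Phi_{0,2}u_2(z,y)\bigr)\,dy,
\]
and likewise for $\Theta_0$.

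For the holomorphy assertion, the first term on the right is clearly holomorphic in $z$ whenever $z\mapsto u(z,c)$ is. For the integral, I would invoke the standard Weyl-disk bound: on each compact $K\subset\C$ the $L^2((a,c),\C^2)$-norms $\|u(z,\cdot)\|$ are uniformly bounded for $z\in K$. The truncated integrals $\int_{d_n}^c$ (with $d_n\downarrow a$) are then holomorphic in $z$ and, by Cauchy--Schwarz, converge uniformly on $K$, so the limit is holomorphic. The main obstacle is precisely this $z$-uniform $L^2$-estimate near $a$, which is nontrivial in general but a standard consequence of the limit circle condition documented in the cited background references.
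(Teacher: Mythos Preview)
Your proposal is correct and follows the standard route. The paper itself omits the proof of this lemma, remarking only that it is a straightforward adaptation of \cite[Appendix~A]{kst2} with details in \cite{brun}; the argument there proceeds exactly as you outline --- via the Lagrange identity for the Wronskian together with the square-integrability of all solutions near $a$ in the limit circle case, and with holomorphy obtained from the locally uniform $L^2$-bound (which, as you note, is the one nontrivial ingredient and is a well-known consequence of the Weyl-disk analysis).
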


Now suppose $\tau$ satisfies Hypothesis \ref{hyp:limit} and set
\begin{align}
\Phi(z,x)&=W_a(c(z),\Phi_0)s(z,x) - W_a(s(z),\Phi_0)c(z,x),  \label{limitsol1} \\
\Theta(z,x)&=W_a(c(z),\Theta_0)s(z,x) - W_a(s(z),\Theta_0)c(z,x). \label{limitsol2}
\end{align}
Hereby, the solutions $s(z,x)$ and $c(z,x)$ are defined in the same way as in Section~\ref{sec:swm}. Observe that we
have $\Phi(z,x)^*=\Phi(z^*,x)$ and $\Theta(z,x)^*=\Theta(z^*,x)$. Moreover, an easy calculation shows
$\Phi(\lam_0,x)= \Phi_0(x)$ and $\Theta(\lam_0,x)= \Theta_0(x)$.

\begin{lemma} \label{wronskilemma}
If Hypothesis \ref{hyp:limit} holds, then the solutions $\Phi(z,x)$ and $\Theta(z,x)$ defined in
\eqref{limitsol1} and \eqref{limitsol2} satisfy $W(\Theta(z),\Phi(z))=1$ as well as the identities
\begin{align}
W_a(\Theta(z),\Phi(\hat{z})) &=1, &
W_a(\Phi(\hat{z}), \Phi(z)) = W_a(\Theta(\hat{z}), \Theta(z)) & = 0.
\end{align}
\end{lemma}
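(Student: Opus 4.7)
The approach is to use the Pl\"ucker identity \eqref{pluecker} as the only real tool: it is a pointwise algebraic identity in $x$, so it passes to the limit $x\to a$ whenever all six Wronskians involved have limits there. By Hypothesis~\ref{hyp:limit} together with Remark~\ref{hypindependent}, this is guaranteed for any choice of $f_i$ among $c(z)$, $s(z)$, $\Phi_0$, $\Theta_0$ and the combinations $\Phi(z)$, $\Theta(z)$, $\Phi(\hat z)$, $\Theta(\hat z)$ appearing in the statement.

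The preliminary step is to compute the four base pairings of $\Theta(z)$ and $\Phi(z)$ against $\Phi_0$ and $\Theta_0$. Expanding via the definitions \eqref{limitsol1}, \eqref{limitsol2} by bilinearity of $W_x$ (the Wronskian coefficients are constants in $x$ and factor out), and then invoking the Pl\"ucker identity for $(c(z),\Theta_0,s(z),\Phi_0)$ in the limit $x\to a$ together with $W(c(z),s(z))=1$ and $W(\Phi_0,\Theta_0)=-1$, one obtains
\begin{align*}
W_a(\Theta(z),\Phi_0) &= 1, & W_a(\Theta(z),\Theta_0) &= 0, \\
W_a(\Phi(z),\Phi_0) &= 0, & W_a(\Phi(z),\Theta_0) &= -1.
\end{align*}
Two of these vanish immediately from the expansion; the other two are precisely the relation that Pl\"ucker produces in the limit.

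For the identities in the statement, I apply \eqref{pluecker} once more, now with $f_3=\Phi_0$, $f_4=\Theta_0$. In the limit $x\to a$, using $W(\Phi_0,\Theta_0)=-1$, it takes the compact form
\be
W_a(f_1,f_2) = W_a(f_1,\Theta_0)\,W_a(f_2,\Phi_0) - W_a(f_1,\Phi_0)\,W_a(f_2,\Theta_0).
\ee
Substituting the base values from the preliminary step for $(f_1,f_2)$ successively equal to $(\Theta(z),\Phi(\hat z))$, $(\Phi(\hat z),\Phi(z))$, and $(\Theta(\hat z),\Theta(z))$ yields $1$, $0$, and $0$, respectively. Specialising the first to $\hat z = z$ gives $W_a(\Theta(z),\Phi(z))=1$; since $\Theta(z)$ and $\Phi(z)$ both solve $\tau u = zu$, their Wronskian is constant in $x$, so in fact $W(\Theta(z),\Phi(z))=1$ as claimed.

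The proof is essentially mechanical; the only real pitfall is careful sign bookkeeping inside the Pl\"ucker identity. There is no analytic obstacle, since Hypothesis~\ref{hyp:limit} supplies all limits required and the passage to the limit in \eqref{pluecker} is justified by its being a pointwise algebraic identity in $x$.
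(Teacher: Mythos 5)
Your argument is correct, and it is exactly the kind of Pl\"ucker-identity bookkeeping that the paper itself applies to establish Remark~\ref{hypindependent} and that it references from \cite{kst2} (the paper omits the proof of this lemma, deferring to \cite{brun}). The base pairings $W_a(\Theta(z),\Phi_0)=1$, $W_a(\Phi(z),\Theta_0)=-1$, $W_a(\Theta(z),\Theta_0)=W_a(\Phi(z),\Phi_0)=0$, the reduction of \eqref{pluecker} with $f_3=\Phi_0$, $f_4=\Theta_0$ to $W_a(f_1,f_2)=W_a(f_1,\Theta_0)W_a(f_2,\Phi_0)-W_a(f_1,\Phi_0)W_a(f_2,\Theta_0)$, and the final passage from $W_a$ to $W$ via constancy of the Wronskian for two solutions of the same equation all check out.
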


Now we will prove that Hypothesis \ref{hyp:limit} is in fact equivalent to $\tau$ being in the limit circle case at $a$.

\begin{corollary}
If Hypothesis \ref{hyp:limit} holds, then $\tau$ is in the limit circle case at $a$. Moreover,
the solutions $\Phi(z,x)$ and $\Theta(z,x)$ defined in
\eqref{limitsol1} and \eqref{limitsol2} satisfy
\begin{align}
W_c(\Phi(z)^*, \Phi(z)) &= -  2\I\,\im (z) \int_a^c | \Phi(z,x) |^2 dx, \label{wronskiintphi} \\
W_c(\Theta(z)^*, \Theta(z)) &= -2\I\,\im (z) \int_a^c | \Theta(z,x) |^2 dx, \label{wronskiinttheta}
\end{align}
and are entire with respect to $z$.
\end{corollary}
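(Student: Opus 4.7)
The plan is to prove the three assertions --- integral identities, limit circle property at $a$, and entireness in $z$ --- in that order, because each conclusion naturally feeds into the next.

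For the integral identities \eqref{wronskiintphi} and \eqref{wronskiinttheta}, I would start from the Lagrange--Green identity for the Dirac expression: if $\tau u = zu$ and $\tau v = \lambda v$, a direct computation using the componentwise form of $\tau$ (and the fact that $Q_{12}=Q_{21}=q_{\rm am}$) gives
\be
\frac{d}{dx} W_x(u,v) = (z-\lambda)\bigl(u_1(x)v_1(x)+u_2(x)v_2(x)\bigr).
\ee
Taking $u=\Phi(z)^*$, which solves $\tau u = z^* u$, and $v=\Phi(z)$, this specialises to $\frac{d}{dx} W_x(\Phi(z)^*,\Phi(z)) = -2\I\,\im(z)\,|\Phi(z,x)|^2$. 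The crucial boundary datum is $W_a(\Phi(z)^*,\Phi(z))=0$: this follows by setting $\hat z = z^*$ in the identity $W_a(\Phi(\hat z),\Phi(z))=0$ from Lemma~\ref{wronskilemma} and invoking the reality property $\Phi(z^*,x)=\Phi(z,x)^*$ stated right before that lemma. Integrating the derivative identity from $x\in(a,c)$ to $c$ and letting $x\to a$ then yields \eqref{wronskiintphi}; the same reasoning applied to $\Theta$, using $W_a(\Theta(\hat z),\Theta(z))=0$ and $\Theta(z^*,x)=\Theta(z,x)^*$, yields \eqref{wronskiinttheta}.

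Next, the limit circle property at $a$ drops out immediately. For any $z$ with $\im(z)\neq 0$, the left-hand sides of \eqref{wronskiintphi} and \eqref{wronskiinttheta} are finite, so $\Phi(z,\cdot)$ and $\Theta(z,\cdot)$ both lie in $L^2$ near $a$. Since $W(\Theta(z),\Phi(z))=1$ by Lemma~\ref{wronskilemma}, they are linearly independent, hence every solution of $\tau u = zu$ lies in $L^2$ near $a$, which is precisely the limit circle criterion at $a$.

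Finally, for entireness of $\Phi(z,x)$ and $\Theta(z,x)$ in $z$, I would appeal to Lemma~\ref{limitcircle-hyp}, which is now applicable because $\tau$ has been shown to be in the limit circle case at $a$. Since $c(z,x)$ and $s(z,x)$ are entire in $z$ (they are solutions with $z$-independent initial conditions at $c$), the four Wronskian limits $W_a(c(z),\Phi_0)$, $W_a(s(z),\Phi_0)$, $W_a(c(z),\Theta_0)$, $W_a(s(z),\Theta_0)$ appearing in \eqref{limitsol1} and \eqref{limitsol2} are holomorphic on all of $\C$, i.e.\ entire. Consequently $\Phi(z,x)$ and $\Theta(z,x)$, being linear combinations of the entire functions $c(z,x)$ and $s(z,x)$ with entire coefficients, are themselves entire in $z$ for each fixed $x\in I$.

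The subtlest point is the interchange of the integration with the limit $x\to a$ in the first step: one needs $\lim_{x\to a} W_x(\Phi(z)^*,\Phi(z))=0$ so that $\int_a^c|\Phi(z,x)|^2\,dx$ is well defined as an improper integral and the boundary contribution at $a$ drops. Everything needed for this --- the existence of the Wronskian limit at $a$ and its precise value $0$ --- is packaged in Lemma~\ref{wronskilemma} and the reality property of $\Phi$, so there is no real analytic obstacle; the main care is bookkeeping the complex conjugation correctly when turning Lemma~\ref{wronskilemma} into a statement about $\Phi(z)^*$ rather than $\Phi(\hat z)$.
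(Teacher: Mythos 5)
Your proof is correct and follows the natural route. The paper itself omits the argument (deferring to the references), so a literal comparison is not possible, but your chain of reasoning — Lagrange identity with $Q^T=Q$ to get $\frac{d}{dx}W_x(u,v)=(z-\lambda)\langle u,v\rangle_{\C^2}$, then $W_a(\Phi(z)^*,\Phi(z))=W_a(\Phi(z^*),\Phi(z))=0$ from Lemma~\ref{wronskilemma} and the reality of $\Phi$ to kill the boundary term at $a$, integrate, read off $L^2$ membership near $a$ of both $\Phi$ and $\Theta$, invoke linear independence ($W(\Theta,\Phi)=1$) to conclude limit circle, and finally apply the holomorphy clause of Lemma~\ref{limitcircle-hyp} to the coefficients $W_a(c(z),\Phi_0)$, $W_a(s(z),\Phi_0)$, etc., to get entireness — is exactly the adaptation from the Schr\"odinger setting that the authors have in mind, and you correctly flag and dispose of the one real subtlety, namely that finiteness of $\int_a^c|\Phi(z,x)|^2\,dx$ must be extracted from the existence of $\lim_{x\to a}W_x(\Phi(z)^*,\Phi(z))$ rather than assumed.
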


\begin{lemma}
Suppose Hypothesis \ref{hyp:limit}, let $H$ be some self-adjoint operator associated with $\tau$ and
let the boundary condition at $a$ be induced by $\Phi_0$. 
Then $\Phi(z,x)$ defined in \eqref{limitsol1} lies in the domain of $H$ near $a$. Moreover,
we have
\be \label{weyl-}
m_{-}(z)= \frac{W_a(\Phi_0, c(z))}{W_a(\Phi_0, s(z))}.
\ee
\end{lemma}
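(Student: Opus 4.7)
The plan is to verify the two assertions essentially by direct bilinear manipulation of Wronskians. The limit circle hypothesis at $a$ (which, by the preceding corollary, follows from Hypothesis~\ref{hyp:limit}) does all the heavy lifting: every solution of $\tau u=zu$ is automatically square integrable near $a$, so the only non-trivial conditions to check are the boundary condition at $a$ and the algebraic identification of $m_-(z)$.

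First I would verify that $\Phi(z,\cdot)$ satisfies the boundary condition at $a$ induced by $\Phi_0$, namely $W_a(\Phi_0,\Phi(z))=0$. Substituting the definition \eqref{limitsol1}, using bilinearity of $W_a$ (the limits of Wronskians exist by Hypothesis~\ref{hyp:limit}), and using the antisymmetry $W_a(c(z),\Phi_0)=-W_a(\Phi_0,c(z))$ and $W_a(s(z),\Phi_0)=-W_a(\Phi_0,s(z))$, one immediately obtains
\begin{align*}
W_a(\Phi_0,\Phi(z)) &= W_a(c(z),\Phi_0)\,W_a(\Phi_0,s(z))- W_a(s(z),\Phi_0)\,W_a(\Phi_0,c(z))\\
&=-W_a(\Phi_0,c(z))\,W_a(\Phi_0,s(z))+W_a(\Phi_0,s(z))\,W_a(\Phi_0,c(z))=0.
\end{align*}
Combined with $\Phi(z,\cdot)\in L^2$ near $a$ (limit circle) and $\tau\Phi(z,\cdot)=z\Phi(z,\cdot)\in L^2$ near $a$, this shows $\Phi(z,\cdot)$ lies in the domain of $H$ near $a$.

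For the formula for $m_-(z)$, recall from Section~\ref{sec:swm} that $u_-(z,x)=c(z,x)-m_-(z)\,s(z,x)$ must lie in the domain of $H^D_{(a,c)}$ near $a$, hence satisfies the same boundary condition $W_a(\Phi_0,u_-(z))=0$. Expanding by bilinearity gives
\begin{equation*}
0=W_a(\Phi_0,c(z))-m_-(z)\,W_a(\Phi_0,s(z)),
\end{equation*}
from which \eqref{weyl-} follows by dividing through (the denominator is non-zero precisely when $z\notin\sigma(H^D_{(a,c)})$, which is the domain in which $m_-(z)$ is defined).

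There is no real obstacle here; the only point requiring any care is to make sure that all Wronskian limits at $a$ exist and are well-defined in order to legitimize the bilinear manipulations — this is exactly the content of Hypothesis~\ref{hyp:limit} together with the preceding lemma showing that $\Phi(z,x)$ and $\Theta(z,x)$ are entire in $z$, so that the identity for $m_-(z)$ is also meromorphic in the correct half-planes.
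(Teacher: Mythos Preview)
Your argument is correct and is the natural, direct verification one expects here. The paper itself omits the proof of this lemma (referring to \cite{brun} for details), but what you have written is precisely the straightforward Wronskian manipulation that the omitted proof would contain: check $W_a(\Phi_0,\Phi(z))=0$ by bilinearity and antisymmetry, then read off $m_-(z)$ from the boundary condition $W_a(\Phi_0,u_-(z))=0$.
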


We are now able to introduce the singular Weyl function $M(z)$ associated with the solutions $\Phi(z,x)$ and $\Theta(z,x)$.
As in Section~\ref{sec:swm}, this is done by requiring that
\be \label{weyllimit}
\Psi(z,x) = \Theta(z,x) + M(z) \Phi(z,x) \in L^2((c,b), \C^2)
\ee
and that $\Psi(z,x)$ satisfies the boundary condition of $H$ at $b$ if $\tau$ is limit circle at $b$. 
The following theorem contains the main result of this section.

\begin{theorem} \label{limitthm}
Suppose Hypothesis \ref{hyp:limit}, let $H$ be some self-adjoint operator
associated with $\tau$ and let the boundary condition at $a$ be induced by $\Phi_0$.
Then the singular Weyl function defined in \eqref{weyllimit} is a Herglotz--Nevanlinna
function and satisfies
\be \label{weylfunclimit1}
\im (M(z))= \im (z) \int_a^b | \Psi (z,x) |^2 dx.
\ee
\end{theorem}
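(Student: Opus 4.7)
My plan is to adapt the Schrödinger-case proof of \cite[Appendix~A]{kst2}, whose core tool is the Lagrange (Green) identity for $\tau$. Using $\sigma_2^* = \sigma_2$ and reality of $Q$, a direct computation gives
\begin{equation*}
\frac{d}{dx} W_x(f^*,g) = (z_1^* - z_2)\,\langle f(x), g(x)\rangle
\end{equation*}
for solutions of $\tau f = z_1 f$ and $\tau g = z_2 g$. Specializing to $f = g = \Psi(z,\cdot)$ and integrating from $c$ to $d$ yields
\begin{equation*}
W_d(\Psi(z)^*,\Psi(z)) - W_c(\Psi(z)^*,\Psi(z)) = -2\I\,\im(z)\int_c^d |\Psi(z,x)|^2\,dx.
\end{equation*}

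The remaining work is the evaluation of the two boundary terms as $c\downarrow a$ and $d\uparrow b$. At the singular endpoint $a$ the limit exists by Hypothesis~\ref{hyp:limit}: expanding $\Psi(z) = \Theta(z) + M(z)\Phi(z)$ bilinearly and applying Lemma~\ref{wronskilemma} with $\hat z = z^*$ (using that $\Theta(z^*,x) = \Theta(z,x)^*$ and $\Phi(z^*,x) = \Phi(z,x)^*$ since both solutions are real entire) I get $W_a(\Theta(z^*),\Theta(z)) = W_a(\Phi(z^*),\Phi(z)) = 0$, $W_a(\Theta(z^*),\Phi(z)) = 1$, and $W_a(\Phi(z^*),\Theta(z)) = -1$, so
\begin{equation*}
W_a(\Psi(z)^*,\Psi(z)) = M(z) - M(z)^* = 2\I\,\im(M(z)).
\end{equation*}
At $b$, the function $\Psi(z,\cdot)$ lies in $L^2((c,b),\C^2)$ and, in the limit circle case, satisfies the boundary condition fixing $H$. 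A Plücker identity argument (with $u_+$ the real solution inducing the boundary condition when applicable, or simply using that any two $L^2$ solutions near $b$ have vanishing Wronskian in the limit point case) then gives $W_b(\Psi(z)^*,\Psi(z)) = 0$. Assembling these pieces I obtain
\begin{equation*}
-2\I\,\im(M(z)) = -2\I\,\im(z)\int_a^b |\Psi(z,x)|^2\,dx,
\end{equation*}
which is precisely \eqref{weylfunclimit1}.

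Finally, to conclude that $M$ is a Herglotz--Nevanlinna function I would note that $M(z)$ is defined as a ratio of Wronskians of $\Theta(z),\Phi(z)$ (entire in $z$) with a solution at $b$ depending holomorphically on $z\in\C\setminus\R$, so $M$ is meromorphic off the real axis; the identity just derived forces $\im(M(z)) > 0$ whenever $\im(z) > 0$ (since $\Psi(z,\cdot)\not\equiv 0$), which rules out poles in the upper half-plane and yields the Herglotz--Nevanlinna property. The main subtlety is the evaluation of $W_b(\Psi^*,\Psi)$: getting it to vanish simultaneously in the limit point and limit circle cases is where one needs the Plücker identity and a careful use of the boundary-condition structure built into the definition of $\Psi$; everything else reduces to the Lagrange identity together with the algebraic relations of Lemma~\ref{wronskilemma}.
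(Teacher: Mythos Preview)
Your proposal is correct and follows exactly the route the paper intends: the authors omit the proof, saying it is a straightforward adaptation of \cite[Appendix~A]{kst2}, and your argument---Lagrange identity applied to $\Psi(z,\cdot)$, evaluation of $W_a(\Psi(z)^*,\Psi(z))$ via Lemma~\ref{wronskilemma}, vanishing of $W_b(\Psi(z)^*,\Psi(z))$ via the Pl\"ucker identity and the boundary structure at $b$---is precisely that adaptation. One small notational slip: in the paper's convention $*$ denotes complex conjugation, so $\sigma_2^* = -\sigma_2$ rather than $\sigma_2$; this does not affect your computation, since what you actually need is that $\sigma_2$ is Hermitian and $Q$ is real symmetric, which together give $\tau(f^*) = (\tau f)^*$ and hence the Lagrange identity in the form you state.
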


\begin{lemma} \label{Ulimit}
Suppose Hypothesis \ref{hyp:limit}, let $H$ be some self-adjoint operator associated with
$\tau$ and let the boundary condition at $a$ be induced by $\Phi_0$. Denote by $U$ the associated
spectral transform from Section~\ref{sec:swm}. Then we have
\be \label{greenpsi}
(U\Psi(z,.))(\lam)=\frac{1}{\lam-z}
\ee
for every $z \in \C \setminus \sigma(H)$. Differentiating with respect to $z$ we even obtain
\be \label{greenpsidz}
(U\partial_z^k \Psi(z,.))(\lam)=\frac{k!}{(\lam-z)^{k+1}}.
\ee
\end{lemma}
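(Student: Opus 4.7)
The plan is to adapt the strategy of Lemma~\ref{lemUub}: express the truncation $\Psi(z,\cdot)\chi_{(x_0,b)}$ as an explicit linear combination of the columns $G_1(z,x_0,\cdot)$ and $G_2(z,x_0,\cdot)$ of the Green's function for $x_0$ near $a$, then let $x_0\to a$. I first note that $\Psi(z,\cdot)\in L^2(I,\C^2)$ under Hypothesis~\ref{hyp:limit}: in the limit circle case at $a$ every solution is $L^2$ near $a$, and $\Psi(z,\cdot)$ is $L^2$ near $b$ by construction (satisfying the boundary condition there if $\tau$ is limit circle at $b$).

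Set $f_{x_0} := -\Theta_2(z,x_0)\,G_1(z,x_0,\cdot) + \Theta_1(z,x_0)\,G_2(z,x_0,\cdot)$. Using the piecewise formula \eqref{defgf} for $G$ together with the Wronskian identities $W_{x_0}(\Theta(z),\Phi(z))=1$ and $W_{x_0}(\Theta(z),\Psi(z))=M(z)$, a direct computation yields $f_{x_0}(y)=M(z)\Phi(z,y)$ for $y\in(a,x_0)$ and $f_{x_0}(y)=\Psi(z,y)$ for $y\in(x_0,b)$, so that
\begin{equation*}
\Psi(z,\cdot)\chi_{(x_0,b)} = f_{x_0} - M(z)\,\Phi(z,\cdot)\chi_{(a,x_0)}\quad\text{in } L^2(I,\C^2).
\end{equation*}
Applying $U$ to both sides: by linearity and Lemma~\ref{lemUub}, the first term contributes $W_{x_0}(\Theta(z),\Phi(\lam))/(\lam-z)$, while the Lagrange identity $(\lam-z)\Phi(\lam,y)^{T}\Phi(z,y)=\partial_y W_y(\Phi(\lam),\Phi(z))$, combined with $W_a(\Phi(z),\Phi(\lam))=0$ from Lemma~\ref{wronskilemma}, gives $(U[\Phi(z,\cdot)\chi_{(a,x_0)}])(\lam) = -W_{x_0}(\Phi(z),\Phi(\lam))/(\lam-z)$. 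The two contributions assemble into
\begin{equation*}
(U[\Psi(z,\cdot)\chi_{(x_0,b)}])(\lam) = \frac{W_{x_0}(\Theta(z)+M(z)\Phi(z),\Phi(\lam))}{\lam-z} = \frac{W_{x_0}(\Psi(z),\Phi(\lam))}{\lam-z}.
\end{equation*}

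Now I let $x_0\to a$. Dominated convergence gives $\Psi(z,\cdot)\chi_{(x_0,b)}\to\Psi(z,\cdot)$ in $L^2(I,\C^2)$, so unitarity of $U$ passes this to $L^2(\R,d\rho)$-convergence of the left-hand side to $U\Psi(z,\cdot)$. Pointwise, Lemma~\ref{wronskilemma} yields $W_a(\Psi(z),\Phi(\lam))=W_a(\Theta(z),\Phi(\lam))+M(z)\cdot 0 = 1$, so the right-hand side converges to $1/(\lam-z)$ for every $\lam\neq z$. Extracting an a.e.-convergent subsequence from the $L^2(\R,d\rho)$-convergent sequence identifies the two limits, proving \eqref{greenpsi}. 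The derivative formula \eqref{greenpsidz} then follows exactly as in Corollary~\ref{corGdz}, by induction on $k$: $\Psi(z,\cdot)$ is an $L^2(I,\C^2)$-holomorphic function of $z$ on $\C\setminus\sigma(H)$, hence $\partial_z$ commutes with $U$ and one differentiates $1/(\lam-z)$ in $z$. The main subtle step will be the piecewise identification of $f_{x_0}$, which reduces to the identity $W_{x_0}(\Theta(z),\Psi(z))=M(z)$ obtained from $\Psi=\Theta+M\Phi$ together with $W(\Theta,\Theta)=0$ and $W(\Theta,\Phi)=1$.
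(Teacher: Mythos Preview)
The paper does not give a proof of this lemma; Section~5 states at the outset that all proofs in this section are straightforward adaptations of \cite[Appendix~A]{kst2} and refers to \cite{brun} for details. Your argument is correct and is precisely the kind of adaptation one would expect: you recycle Lemma~\ref{lemUub} by writing $\Psi(z,\cdot)\chi_{(x_0,b)}$ as an explicit linear combination of the Green's-function columns $G_i(z,x_0,\cdot)$ plus the compactly supported correction $M(z)\Phi(z,\cdot)\chi_{(a,x_0)}$, apply $U$, and then let $x_0\to a$ using the limit-circle Wronskian identities of Lemma~\ref{wronskilemma}. The piecewise identification of $f_{x_0}$ via $W_{x_0}(\Theta(z),\Psi(z))=M(z)$ and $W_{x_0}(\Theta(z),\Phi(z))=1$ is correct, as is the Lagrange-identity computation for the correction term (crucially using $W_a(\Phi(z),\Phi(\lambda))=0$). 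The identification of the $L^2(\R,d\rho)$ limit with the pointwise limit via an a.e.-convergent subsequence is standard, and \eqref{greenpsidz} then follows exactly as in Corollary~\ref{corGdz}.

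One small remark: a seemingly more direct route---computing $\int_a^b \Phi(\lambda,y)^T\Psi(z,y)\,dy$ via the Lagrange identity alone---would require showing $W_c(\Phi(\lambda),\Psi(z))\to 0$ in $L^2(\R,d\rho)$ as $c\uparrow b$, which is not obvious since $\Phi(\lambda,\cdot)$ need not lie in $L^2$ near $b$. Your detour through the Green's function neatly sidesteps this, so the approach is not just correct but well chosen.
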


We conclude this section by refining the integral representation of $M(z)$ which has been established in Theorem \ref{IntR}.

\begin{corollary}
Suppose the same assumptions as in Theorem \ref{limitthm}. Then we have
\be \label{intweyllimit}
M(z)= \re(M(\I)) + \int_{\R} \left( \frac{1}{\lam-z} - \frac{\lam}{1+\lam^2} \right)d\rho(\lam),
\ee
where $\rho$ (which is exactly the spectral measure from Section~\ref{sec:swm}) satisfies $\int_{\R} d\rho=\infty$
and $\int_{\R} \frac{d\rho(\lam)}{1+\lam^2}  < \infty$.
\end{corollary}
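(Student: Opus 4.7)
The plan is to invoke the classical Herglotz--Nevanlinna representation theorem and then pin down the three free parameters (the real constant, the linear coefficient, and the infinity of total mass) using the tools already established, namely Lemma~\ref{Ulimit}, the Wronskian identities of Lemma~\ref{wronskilemma}, and the identity \eqref{weylfunclimit1}.

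By Theorem~\ref{limitthm}, $M(z)$ is a Herglotz--Nevanlinna function, so the classical Nevanlinna representation produces constants $\alpha\in\R$, $\beta\ge 0$, and a positive Borel measure $\mu$ with $\int_\R \frac{d\mu(\lam)}{1+\lam^2}<\infty$ such that
\[
M(z)=\alpha+\beta z+\int_\R\left(\frac{1}{\lam-z}-\frac{\lam}{1+\lam^2}\right)d\mu(\lam).
\]
Applying the Stieltjes--Liv\v{s}i\'c inversion formula to the right-hand side reproduces exactly \eqref{defrho}, so $\mu$ must coincide with $\rho$, and the integrability condition comes along for free. Taking real parts at $z=\I$, together with the elementary identity $\re\bigl(1/(\lam-\I)\bigr)=\lam/(1+\lam^2)$, then yields $\alpha=\re M(\I)$.

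To see that $\beta=0$, I would compute $\im M(\I y)/y$ in two ways. On the one hand, taking imaginary parts in the representation gives $\im M(\I y)/y=\beta+\int_\R d\rho(\lam)/(\lam^2+y^2)$. On the other hand, \eqref{weylfunclimit1} gives $\im M(\I y)/y=\|\Psi(\I y,\cdot)\|^2$, and Lemma~\ref{Ulimit} combined with unitarity of $U$ gives $\|\Psi(\I y,\cdot)\|^2=\int_\R d\rho(\lam)/(\lam^2+y^2)$. Comparing the two expressions forces $\beta=0$.

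Finally, for $\int d\rho=\infty$ I argue by contradiction. If $\rho$ were finite then $1\in L^2(\R,d\rho)$, so $f:=U^{-1}(1)\in L^2(I,\C^2)$. Intertwining the resolvent via $U$, the function $(H-z)^{-1}f$ has spectral transform $(\lam-z)^{-1}$, which by Lemma~\ref{Ulimit} also equals $U\Psi(z,\cdot)$; unitarity of $U$ then forces $\Psi(z,\cdot)=(H-z)^{-1}f\in\dom(H)$. This would require $W_a(\Phi_0,\Psi(z,\cdot))=0$, but using $\Phi(\lam_0,\cdot)=\Phi_0$ together with $W_a(\Theta(z),\Phi(\lam_0))=1$ and $W_a(\Phi(\lam_0),\Phi(z))=0$ from Lemma~\ref{wronskilemma}, a direct computation gives $W_a(\Phi_0,\Psi(z,\cdot))=-1$, a contradiction. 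I expect this last step to be the main obstacle, since it requires recognizing that the obstruction to $\Psi(z,\cdot)$ lying in $\dom(H)$ is precisely the boundary form induced by $\Phi_0$; the earlier steps are routine applications of the Herglotz machinery and \eqref{weylfunclimit1}.
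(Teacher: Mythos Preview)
Your proof is correct. The paper omits its own proof of this corollary, referring instead to the analogous Schr\"odinger argument in \cite[Appendix~A]{kst2} and to \cite{brun} for details; the route you take---Herglotz representation, identification of $\mu$ with $\rho$ via Stieltjes inversion, $\beta=0$ from comparing \eqref{weylfunclimit1} with the unitarity relation $\|\Psi(\I y,\cdot)\|^2=\int_\R(\lam^2+y^2)^{-1}\,d\rho(\lam)$ coming from Lemma~\ref{Ulimit}, and $\rho(\R)=\infty$ by the domain contradiction---is exactly the standard argument those references employ.
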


\section{Exponential growth rates}
\label{sec:egr}

It turns out that the real entire fundamental system $\Theta(z,x)$, $\Phi(z,x)$ from Section~\ref{sec:swm} is not sufficient for the proofs of our inverse uniqueness results.
To this end we will need information on the growth order of the functions $\Theta(\,\cdot\,,x)$ and $\Phi(\,\cdot\,,x)$.
Our presentation in this section will closely follow \cite[Section~3]{et}.

We will say a real entire solution $\Phi(z,x)$ is of growth order at most $s\geq 0$ if the entire functions $\Phi_1(\,\cdot\,,x)$ and $\Phi_2(\,\cdot\,,x)$ are of growth order at most $s$ for all $x\in I$.
 Our first aim is to extend Lemma~\ref{lem:pt} and to establish the connection between the growth order of $\Phi(z,x)$ and the convergence exponent
of the spectrum. We begin by recalling some basic notation and refer to the classical book by Levin \cite{lev} for proofs and further
background.

Given some discrete set $S\subseteq \C$, the number
\begin{equation}
 \inf\biggr\lbrace s\geq0 \,\biggr|\, \sum_{\mu\in S} \frac{1}{1+|\mu|^s}<\infty \biggr\rbrace \in [0,\infty],
\end{equation}
is called the convergence exponent of $S$. Moreover, the smallest $p\in\N_0$ such that 
\begin{equation}
 \sum_{\mu\in S} \frac{1}{1+|\mu|^{p+1}}<\infty
\end{equation}
will be referred to as the genus of $S$. Introducing the elementary factors
\begin{equation}
 E_p(\zeta,z) = \left(1-\frac{z}{\zeta}\right) \exp\left(\sum_{k=1}^p\frac{1}{k} \frac{z^k}{\zeta^k}\right), \quad z\in\C,
\end{equation}
if $\zeta\not=0$ and $E_p(0,z)=z$, we recall that the product $\prod_{\mu\in S} E_p(\mu,z)$
converges locally uniformly to an entire function of growth order $s$, where $s$ and $p$ are the
convergence exponent and genus of $S$, respectively.

\begin{theorem}\label{thm:IOphiev}
For each $s\geq 0$ the following properties are equivalent:
\begin{enumerate}[label=\emph{(\roman*)}, leftmargin=*, widest=XX]
\item The spectrum of $H^D_{(a,c)}$ is discrete and has convergence exponent at most $s$.
\item There  is a real entire solution $\Phi(z,x)$ of growth order at most $s$ which is non-trivial and lies in the domain of $H$ near $a$ for each $z\in\C$.
\end{enumerate}
In this case $s\ge 1$.
\end{theorem}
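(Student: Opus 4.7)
The plan is to follow the strategy of \cite[Section~3]{et}, developed there for one-dimensional Schr\"odinger operators, and adapt it to the present Dirac setting. The key bridge between the two statements is that, for any non-trivial real entire solution $\Phi(z,x)$ in the domain of $H$ near $a$, the Dirichlet eigenvalues of $H^D_{(a,c)}$ coincide (counted with multiplicities) with the zeros of the entire function $\Phi_1(\,\cdot\,,c)$.

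For (ii)$\Rightarrow$(i), given $\Phi$ of growth order at most $s$, I would first rule out $\Phi_1(\,\cdot\,,c)\equiv 0$: otherwise non-triviality of $\Phi(z,\,\cdot\,)$ combined with uniqueness for the initial value problem at $c$ would force $\Phi_2(z,c)\neq 0$ for every $z\in\C$, turning every $z\in\C$ into a Dirichlet eigenvalue of $H^D_{(a,c)}$ and contradicting self-adjointness. Thus $\Phi_1(\,\cdot\,,c)$ is a non-trivial entire function of order at most $s$, and its zero set---exactly $\sigma(H^D_{(a,c)})$---forms a discrete set whose convergence exponent is at most its order, hence at most $s$.

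For (i)$\Rightarrow$(ii), Lemma~\ref{lem:pt} provides a real entire solution $\tilde\Phi(z,x)$ in the domain near $a$; its first component $\tilde\Phi_1(\,\cdot\,,c)$ has zeros precisely at the eigenvalues $\{\mu_n\}$, whose convergence exponent is at most $s$. Forming the canonical product $P(z)$ over $\{\mu_n\}$ (an entire function of order at most $s$), the quotient $\tilde\Phi_1(\,\cdot\,,c)/P$ is entire and nowhere vanishing, hence equals $\E^{h(z)}$ for some entire $h$. Setting $\Phi(z,x):=\E^{-h(z)}\tilde\Phi(z,x)$ produces another real entire solution in the domain near $a$ with $\Phi_1(z,c)=P(z)$ of order at most $s$. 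Once $\Phi_2(\,\cdot\,,c)$ is shown to have order at most $s$ as well, the identity
\begin{equation*}
\Phi(z,x)=\Phi_1(z,c)\,c(z,x)+\Phi_2(z,c)\,s(z,x),
\end{equation*}
combined with the standard fact that $c(z,x)$ and $s(z,x)$ are entire of order at most $1$ in $z$, will yield growth order at most $\max(s,1)=s$ for $\Phi(\,\cdot\,,x)$ at every $x\in I$.

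The crucial technical step, and the one I expect to be the main obstacle, is the order bound on $\Phi_2(\,\cdot\,,c)$. Since any solution in the domain of $H$ near $a$ is proportional to the Weyl solution $u_-(z,x)=c(z,x)-m_-(z)s(z,x)$---this covers the limit circle case as well, with $u_-$ implementing the prescribed boundary condition at $a$---one has $\Phi_2(z,c)=-P(z)\,m_-(z)$, where $m_-$ is the Herglotz--Nevanlinna function associated with $H^D_{(a,c)}$. The poles of $m_-$ are exactly $\{\mu_n\}$ and its zeros interlace with them by the Herglotz property, hence share the same convergence exponent; together with the standard Herglotz estimate $m_-(\I y)=\OO(y)$ as $y\to\infty$ and a canonical factorization argument in the spirit of \cite{lev}, this shows $P\cdot m_-$ is entire of order at most $\max(s,1)=s$. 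Finally, $s\geq 1$ is forced by the asymptotics~\eqref{IOphias}: for any $x\neq x_0$, $|\Phi(z,x)|$ grows at least like $\exp(|x-x_0|\cdot|\!\im(z)|)$ as $|\!\im(z)|\to\infty$, so the order of $\Phi(\,\cdot\,,x)$ is at least $1$ and any $s$ dominating this order satisfies $s\geq 1$; by the equivalence just established, the same lower bound applies to the convergence exponent in (i).
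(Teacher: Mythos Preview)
Your strategy is essentially the paper's, and the direction (ii)$\Rightarrow$(i) is identical. In the direction (i)$\Rightarrow$(ii), however, the step you yourself flag as ``the main obstacle'' is not actually carried out. You write $\Phi_2(z,c)=-P(z)m_-(z)$ and then claim that the Herglotz bound $m_-(\I y)=\OO(y)$ together with ``a canonical factorization argument in the spirit of \cite{lev}'' yields order at most $s$. An estimate along a single ray cannot control the exponential factor in a Hadamard factorization: writing $P\cdot m_-=Q\,\E^{g}$ with $Q$ the canonical product over the (interlacing) Neumann zeros, nothing you have said bounds the entire function $g$. The precise tool that does this is Krein's theorem \cite[Theorem~27.2.1]{lev}: a meromorphic Herglotz--Nevanlinna function with simple interlacing real zeros $\{\nu_j\}$ and poles $\{\mu_j\}$ admits the genus-zero representation
\[
 m_-(z)=C\prod_{j}\frac{E_0(\mu_j,z)}{E_0(\nu_j,z)}.
\]
The paper uses exactly this: it forms the genus-$p$ canonical products $\alpha(z)=\prod_j E_p(\nu_j,z)$ and $\ti\beta(z)=\prod_j E_p(\mu_j,z)$ (both of order $\le s$), writes $m_-=\E^{h}\ti\beta/\alpha$, and compares with Krein's formula to identify $h$ explicitly as a polynomial of degree at most $p$. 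It then sets $\Phi_2(z,c)=\alpha(z)$ and $\Phi_1(z,c)=-\E^{h(z)}\ti\beta(z)$, both of order $\le s$. Your construction (renormalizing an arbitrary $\tilde\Phi$ from Lemma~\ref{lem:pt}) is equivalent once Krein's theorem is invoked, but as written the order bound on $\Phi_2(\,\cdot\,,c)$ is a genuine gap.

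A minor point on $s\ge 1$: the asymptotic~\eqref{IOphias} gives $|\Phi(z,x)|\sim|\Phi(z,x_0)|\,\E^{(x-x_0)\im(z)}$, which controls a \emph{ratio}, not $|\Phi(z,x)|$ itself; to conclude you still need a lower bound on $|\Phi(z,x_0)|$ along the ray. This can be repaired (if the order were $<1$, the genus-zero product over the real zeros is bounded below on the imaginary axis), but the paper avoids the issue by arguing directly on the spectral side as in \cite[Lemma~6.3]{kst2}.
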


\begin{proof}
First suppose that the spectrum of $H^D_{(a,c)}$ is discrete and has convergence exponent at most $s$. 
Then the same 
holds true for the spectrum of the operator $H^N_{(a,c)}$ and as in \cite[Lemma~6.3]{kst2} one shows that $s\geq 1$.
 We will denote the spectra of these operators with
\begin{align*}
\sig(H^D_{(a,c)}) = \{ \mu_j \}_{j\in\Z} \quad\text{and}\quad \sig(H^N_{(a,c)}) = \{ \nu_{j} \}_{j\in\Z}.
\end{align*}
Note that the eigenvalues $\mu_j$, $\nu_{j}$, $j\in\Z$ are precisely the zeros
of $\Phi_1(\,\cdot\,,c)$ and $\Phi_2(\,\cdot\,,c)$, respectively. Also recall that both spectra are interlacing
(due to the fact that the quotient of the functions $\Phi_1(\,\cdot\,,c)$ and $\Phi_2(\,\cdot\,,c)$ is a Herglotz--Nevanlinna function)
\begin{align*}
\nu_{j-1} < \mu_j < \nu_j, \qquad j\in\Z,
\end{align*}
and that Krein's theorem \cite[Theorem~27.2.1]{lev} states
\begin{equation}\label{eqKrein}
m_-(z) = C \prod_{j\in\Z} \frac{E_0(\mu_j,z)}{E_0(\nu_{j},z)}
\end{equation}
for some real constant $C\not=0$.
Now consider the real entire functions
\[
\alpha(z) = \prod_{j\in\Z} E_p\left(\nu_{j},z\right) \quad\text{and}\quad \ti{\beta}(z) = \prod_{j\in\Z} E_p\left(\mu_{j},z\right),
\]
where $p\in\N_0$ is the genus of the sequences of eigenvalues.
Then clearly $\alpha(z)$ and $\ti{\beta}(z)$ are of growth order at most $s$ by Borel's theorem (see \cite[Theorem~4.3.3]{lev}).
Next note that $m_-(z) = \E^{h(z)} \ti{\beta}(z) \alpha(z)^{-1}$ for some entire function $h(z)$ since the right-hand side
has the same poles and zeros as $m_-(z)$. 
Comparing this with Krein's formula \eqref{eqKrein}, we obtain that $h(z)$ is in fact a polynomial of degree at most $p$:
\[
h(z) = \sum_{k=1}^p \frac{z^k}{k} \sum_{j\in\Z} \left( \frac{1}{\nu_{j}^k} - \frac{1}{\mu_j^k}\right) +\ln(C), \quad z\in\C,
\]
where the sums converge absolutely by the interlacing property of the eigenvalues.
In particular, the function $\beta(z) = -m_-(z)\alpha(z)= -\E^{h(z)} \ti{\beta}(z)$ is  of growth order at most $s$ as well.
Hence the solution $\Phi(z,x)$ with $\Phi_2(z,c) = \alpha(z)$ and $\Phi_1(z,c) = \beta(z)$, $z\in\C$ lies
in the domain of $H$ near $a$ and is of growth order at most $s$.

For the converse, let $\Phi(z,x)$ be a real entire solution of growth order at most $s$ which lies in the domain of $H$ near $a$.
Then $m_-(z) = -\Phi_2(z,c)/\Phi_1(z,c)$ and hence the spectrum of $H_{(a,c)}^D$ is discrete and coincides with the zeros of $\Phi_1(\,\cdot\,,c)$.
Now since the function $\Phi_1(\,\cdot\,,c)$ is of growth order at most $s$, its zeros are of convergence exponent at most $s$.
\end{proof}

Given a real entire solution $\Phi(z,x)$ of growth order $s\geq 1$ we are not able to prove the existence of a second solution $\Theta(z,x)$ of the same growth order.
However, we recall the following lemma which provides a criterion to ensure the existence of a second solution $\Theta(z,x)$ which has growth order arbitrarily close to $s$. To this end, we denote by $R_s(\C)$ the set of all entire functions $f(z)$ such that $|f(z)| \leq B \E^{A |z|^{s}}$ for some constants $A$, $B>0$. We will write $\Phi(\,\cdot\,,x) \in R_s(\C)$ if and only if $\Phi_1(\,\cdot\,,x), \Phi_2(\,\cdot\,,x) \in R_s(\C)$.

\begin{lemma}[\cite{et,kst2}]\label{lem:cor}
Let $s\geq1$ and suppose that $\Phi(\,\cdot\,,x) \in R_s(\C)$ for one (and hence for all) $x\in I$.
Then there is a real entire second solution $\Theta(z,x)\in R_s(\C)$ with
$W(\Theta(z),\Phi(z))=1$ if and only if
\begin{equation}\label{eqnSSphiboundlow}
   |\Phi_1(z,y)| + |\Phi_2(z,y)| \geq b \E^{-a|z|^{s}}, \quad z\in\C,
\end{equation}
for some constants $a$, $b>0$ and some $y\in I$.
\end{lemma}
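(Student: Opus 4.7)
The plan is to prove necessity by a one-line Wronskian computation and handle sufficiency by a local-to-global construction: build the initial data at a single point via a Bezout-type interpolation, then propagate it along the ODE.

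\emph{Necessity.} Suppose a solution $\Theta(\,\cdot\,,x)\in R_s(\C)$ with $W(\Theta(z),\Phi(z))=1$ exists. Fix any $y\in I$ and bound
\[
1 = |\Theta_1(z,y)\Phi_2(z,y) - \Theta_2(z,y)\Phi_1(z,y)| \leq \bigl(|\Theta_1(z,y)|+|\Theta_2(z,y)|\bigr)\bigl(|\Phi_1(z,y)|+|\Phi_2(z,y)|\bigr).
\]
Since the first factor on the right is dominated by $2B\E^{A|z|^s}$, we get $|\Phi_1(z,y)|+|\Phi_2(z,y)|\geq (2B)^{-1}\E^{-A|z|^s}$, which is the desired bound with $b=(2B)^{-1}$ and $a=A$.

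\emph{Sufficiency, initial step at the point $y$.} The lower bound forces $\Phi_1(\,\cdot\,,y)$ and $\Phi_2(\,\cdot\,,y)$ to have no common zeros. I would first construct entire functions $T_1, T_2\in R_s(\C)$ satisfying the scalar Bezout identity $T_1(z)\Phi_2(z,y)-T_2(z)\Phi_1(z,y)=1$. To do so, enumerate the zeros $\{z_n\}$ of $\Phi_1(\,\cdot\,,y)$; at each zero the lower bound yields $|\Phi_2(z_n,y)|\geq b\,\E^{-a|z_n|^s}$, so the Hermite interpolation data $1/\Phi_2(\,\cdot\,,y)$ (together with its derivatives, at zeros of higher order) grow at most like $\E^{a|z_n|^s}$. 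Standard interpolation results for finite order entire functions (e.g.\ \cite[Section~3]{et}, \cite{kst2}) then produce $T_1\in R_s(\C)$ matching this data. The quotient $T_2:=(T_1\Phi_2-1)/\Phi_1$ is then entire by construction, and membership in $R_s(\C)$ follows from Cartan-type minimum-modulus estimates, which control $|\Phi_1|^{-1}$ off exceptional discs by $\E^{O(|z|^s)}$, combined with the maximum principle applied to the entire function $T_2$. This last step is the main obstacle: separately $T_1$, $\Phi_2$ and $1/\Phi_1$ are only controlled by $\E^{O(|z|^s)}$ a priori, and one must use that the numerator $T_1\Phi_2-1$ vanishes on the Cartan-exceptional zeros of $\Phi_1$ to recover the right growth order for the quotient.

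\emph{Sufficiency, propagation to all $x\in I$.} Define $\Theta(z,x)$ as the unique solution of the initial value problem $\tau u = z u$, $u(z,y)=(T_1(z),T_2(z))^\top$. Holomorphy of the flow in the spectral parameter yields real entirity of $\Theta(\,\cdot\,,x)$ for every $x$, and since any two solutions of $\tau u=zu$ have constant Wronskian in $x$, the identity $W_y(\Theta(z),\Phi(z))=T_1(z)\Phi_2(z,y)-T_2(z)\Phi_1(z,y)=1$ propagates globally. For the growth order, the Dirac system $u'=\I\sigma_2(z-Q)u$ and Gr\"onwall give
\[
|\Theta(z,x)| \leq |\Theta(z,y)|\exp\!\left(\int_y^x (|z|+\|Q(r)\|)\,dr\right) \leq B'\,\E^{A'|z|^s},
\]
where the last inequality uses $|z|\leq 1+|z|^s$ for $s\geq 1$ together with $\Theta(\,\cdot\,,y)\in R_s(\C)$. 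Hence $\Theta(\,\cdot\,,x)\in R_s(\C)$ for every $x\in I$, completing the proof.
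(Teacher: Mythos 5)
Your necessity argument and the overall strategy for sufficiency (solve a scalar Bezout identity at the single point $y$, then use the initial value problem and Gr\"onwall to propagate to all of $I$) are sound and do reflect the approach in the cited references. The propagation step is fine: since $|z|\leq 1+|z|^s$ for $s\geq 1$, the Gr\"onwall bound indeed stays inside $R_s(\C)$, and real-entirity propagates because $T_1,T_2$ are real entire and the coefficients are real.

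The genuine gap is the Bezout construction at $y$, and more precisely the first half of it. You treat the existence of an interpolant $T_1\in R_s(\C)$ with $T_1(\mu_j)=1/\Phi_2(\mu_j,y)$ (where $\mu_j$ are the zeros of $\Phi_1(\,\cdot\,,y)$) as a ``standard interpolation result,'' but it is not: the data grow like $\E^{a|\mu_j|^s}$ and the nodes are only known to be the zero set of an arbitrary function in $R_s(\C)$. Free interpolation in $R_s(\C)$ with data of that growth requires nontrivial density/separation hypotheses on the node sequence that do not follow from membership in $R_s(\C)$ alone. In this problem what actually saves the construction is the structure you never invoke: because $\Phi$ lies in the domain of $H$ near $a$, the ratio $m_-(z)=-\Phi_2(z,y)/\Phi_1(z,y)$ is Herglotz, hence the zeros of $\Phi_1(\,\cdot\,,y)$ and $\Phi_2(\,\cdot\,,y)$ are real, simple, and interlace, and this interlacing together with the lower bound \eqref{eqnSSphiboundlow} is what makes the interpolation series converge with the right growth. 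Without injecting that, your ``standard interpolation'' step is unsupported; with it, the references' proof is essentially what you need to reproduce. Your second concern, about the quotient $T_2=(T_1\Phi_2-1)/\Phi_1$, is less of an obstacle than you suggest: once $T_1\Phi_2-1\in R_s(\C)$ vanishes to the right order at each $\mu_j$, the Cartan minimum-modulus estimate on circles around each $\mu_j$ together with the maximum principle inside those circles, and Phragm\'en--Lindel\"of away from the exceptional set, give $T_2\in R_s(\C)$ along the lines you indicate. So the proof is recoverable, but the interpolation step needs to be carried out using the interlacing structure rather than appealed to as folklore.
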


The previous result enables us to provide a sufficient condition for a second solution of order $s+\eps$ to exist, in terms of the interlacing zeros of $\Phi_1(\,\cdot\,,c)$
and $\Phi_2(\,\cdot\,,c)$, which we denote by $\lbrace \mu_j\rbrace_{j\in\Z}$ and $\lbrace \nu_{j}\rbrace_{j\in\Z}$ respectively.

\begin{lemma}[\cite{et,kst2}]
Suppose $\Phi(z,x)$ is a real entire solution of growth order $s\geq 1$ and that for some $r>0$ all but finitely many of the discs given by
\begin{equation}\label{IOestmunu}
|z-\mu_j|<|\mu_j|^{-r} \quad\text{and}\quad |z-\nu_{j}|<|\nu_{j}|^{-r}, \quad j\in\Z,
\end{equation}
are disjoint. Then for every $\eps>0$ there is a real entire second solution $\Theta(z,x)$ with growth order at most $s+\eps$ and $W(\Theta(z),\Phi(z))=1$.
\end{lemma}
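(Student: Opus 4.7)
The plan is to reduce the claim to Lemma~\ref{lem:cor}, so it suffices to exhibit a point $y\in I$ and constants $a$, $b>0$ such that
\begin{equation*}
 |\Phi_1(z,y)|+|\Phi_2(z,y)| \geq b\,\E^{-a|z|^{s+\eps}}, \qquad z\in\C.
\end{equation*}
I would take $y=c$, so that, as in the proof of Theorem~\ref{thm:IOphiev}, the real entire functions $\Phi_1(\,\cdot\,,c)$ and $\Phi_2(\,\cdot\,,c)$ are of growth order at most $s$ and have precisely the zero sets $\{\mu_j\}_{j\in\Z}$ and $\{\nu_j\}_{j\in\Z}$, respectively.

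The next step is to invoke a classical minimum-modulus theorem for entire functions of finite order (see, e.g., \cite[Ch.~I]{lev}): for any entire function $f$ of order at most $s$ with zero set $\{\zeta_k\}$ and any $\eps>0$, there exist constants $A$, $B>0$ such that
\begin{equation*}
 |f(z)| \geq B\,\E^{-A|z|^{s+\eps}}
\end{equation*}
for every $z$ lying outside the union of the discs $|z-\zeta_k|<|\zeta_k|^{-r}$. Applying this separately to $\Phi_1(\,\cdot\,,c)$ and $\Phi_2(\,\cdot\,,c)$ produces lower bounds of this form outside the $\mu_j$-discs and the $\nu_j$-discs, respectively.

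The disjointness hypothesis now enters decisively. By assumption, all but finitely many of the two families of discs are pairwise disjoint, so there is a bounded exceptional set $F\subset\C$ outside of which no point can lie simultaneously in a $\mu_j$-disc and a $\nu_j$-disc. Hence, for every $z\in\C\setminus F$, at least one of the two lower bounds applies and
\begin{equation*}
 |\Phi_1(z,c)| + |\Phi_2(z,c)| \geq B\,\E^{-A|z|^{s+\eps}}.
\end{equation*}
Since $\Phi(z,\cdot)$ is a non-trivial solution for every $z\in\C$, the vector $(\Phi_1(z,c),\Phi_2(z,c))$ never vanishes, so the left-hand side is a continuous, strictly positive function of $z$ and is therefore bounded away from zero on the compact closure of $F$. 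The bound extends to all of $\C$ after enlarging $A$ and shrinking $B$, and Lemma~\ref{lem:cor} applied with exponent $s+\eps$ then produces the desired real entire solution $\Theta(z,x)\in R_{s+\eps}(\C)$ with $W(\Theta(z),\Phi(z))=1$.

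The only genuinely nontrivial step is the minimum-modulus estimate with exceptional discs of the precise prescribed radii $|\zeta_k|^{-r}$; one must pass from the more standard statement (in which only the total radius of the excluded set is controlled) to the stated form. This is, however, a purely entire-function-theoretic argument which was carried out in \cite{et,kst2} and which requires no modification in the present Dirac setting.
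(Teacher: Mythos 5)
Your proof is correct and takes the same route as the cited references \cite{et,kst2}: verify the lower-bound criterion of Lemma~\ref{lem:cor} by applying the minimum-modulus theorem for entire functions of finite order separately to $\Phi_1(\,\cdot\,,c)$ and $\Phi_2(\,\cdot\,,c)$, then use the disjointness of the exceptional discs to ensure that at least one of the two lower bounds is in force outside a bounded set. The remaining bounded set is handled by continuity together with the non-vanishing of $(\Phi_1(z,c),\Phi_2(z,c))$, exactly as in those references.
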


\begin{remark}\label{rem:uniqExp}
By the Hadamard product theorem~\cite[Theorem~4.2.1]{lev}, a solution $\Phi(z,x)$ of growth order $s\geq 1$ is unique up to a factor $\E^{g(z)}$,
for some polynomial $g(z)$ real modulo $\I\pi$ and of degree at most $s$.
A solution $\Theta(z,x)$ of growth order at most $s$ is unique only up to $f(z) \Phi(z,x)$, where $f(z)$ is a real entire function of growth order at most $s$.
\end{remark}

Finally, observe that under the assumptions in this section, one can always use the function $\hat{g}(z)=\exp(z^{2\ceil{(p+1)/2}})$ in Theorem~\ref{IntR}.

\section{A local Borg--Marchenko uniqueness result}
\label{sec:lbmt}

Now we turn to our Borg--Marchenko uniqueness result. Our argument will adapt the main strategy from
\cite{kst2} to the present situation (see also \cite{ben,cg}).
We will assume that our Dirac operator $H$ is in standard form, that is, normalized such that
\be
q_{\rm el} \equiv 0.
\ee
 First of all we note that by
\be
m_-(z) = - \frac{\Phi_2(z,c)}{\Phi_1(z,c)} = \I + \oo(1)
\ee
we see that $\Phi_1(z,c)$ and $\Phi_2(z,c)$ have the same asymptotic growth as $|z|\to\infty$ along nonreal rays and hence it does not
matter which one we take for this purpose.

\begin{lemma}[\cite{kst2}]\label{lemAsymM}
For each $x\in I$, the singular Weyl function $M(z)$ and the Weyl solution $\Psi(z,x)$ defined in~\eqref{defpsi}
have the following asymptotics:
\begin{align}\label{asymM}
M(z) &= -\frac{\Theta_j(z,x)}{\Phi_j(z,x)} + \OO\left(\frac{1}{\Phi_j(z,x)^2}\right),\\ \label{asympsi}
\Psi_j(z,x) &= \frac{\I}{2 \Phi_j(z,x)} \left( 1 + \oo(1) \right),
\end{align}
as $\im(z)\to\infty$.
\end{lemma}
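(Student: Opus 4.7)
The plan is to follow the strategy of the Schr\"odinger analogue (Lemma~3.5 in \cite{kst2}). Fix $x\in I$ and compute the singular Weyl function $M(z)$ via the Wronskian evaluated at the interior point $x$. Writing $\tilde{m}_+(z,x) = u_{+,2}(z,x)/u_{+,1}(z,x)$ for the (normalization-invariant) Weyl function based at $x$, one immediately obtains
\begin{equation*}
M(z) = -\frac{W_x(\Theta(z),u_+(z))}{W_x(\Phi(z),u_+(z))} = -\frac{\Theta_1(z,x)\tilde{m}_+(z,x) - \Theta_2(z,x)}{\Phi_1(z,x)\tilde{m}_+(z,x) - \Phi_2(z,x)}.
\end{equation*}

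Next I would eliminate one component of $\Theta(z,x)$ using the Wronskian identity $\Theta_1\Phi_2 - \Theta_2\Phi_1 = 1$. For $j=1$, substituting $\Theta_2 = \Theta_1\Phi_2/\Phi_1 - 1/\Phi_1$ and simplifying the resulting fraction yields
\begin{equation*}
M(z) + \frac{\Theta_1(z,x)}{\Phi_1(z,x)} = -\frac{1}{\Phi_1(z,x)^2 \bigl(\tilde{m}_+(z,x) - \Phi_2(z,x)/\Phi_1(z,x)\bigr)}.
\end{equation*}
For $j=2$, dividing numerator and denominator of $M(z)$ by $\tilde{m}_+(z,x)$ first and then substituting $\Theta_1 = \Theta_2\Phi_1/\Phi_2 + 1/\Phi_2$ gives the parallel identity
\begin{equation*}
M(z) + \frac{\Theta_2(z,x)}{\Phi_2(z,x)} = -\frac{1}{\Phi_2(z,x)^2 \bigl(\Phi_1(z,x)/\Phi_2(z,x) - 1/\tilde{m}_+(z,x)\bigr)}.
\end{equation*}

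It then remains to control the denominators. The previous lemma together with its proof shows $\tilde{m}_-(z,x) = \I + \oo(1)$ as $\im(z)\to\infty$, and the same type of asymptotic expansion of $c(z,\cdot)$, $s(z,\cdot)$ applied on the right of $x$ gives $\tilde{m}_+(z,x) = \I + \oo(1)$ as well. Since $\Phi(z,\cdot)$ is entire and lies in the domain of $H$ near $a$, it is a scalar multiple of the Weyl solution at $x$, hence $\Phi_2(z,x)/\Phi_1(z,x) = -\tilde{m}_-(z,x)$. Therefore both denominators satisfy
\begin{equation*}
\tilde{m}_+(z,x) - \Phi_2(z,x)/\Phi_1(z,x) = 2\I + \oo(1), \qquad \Phi_1(z,x)/\Phi_2(z,x) - 1/\tilde{m}_+(z,x) = 2\I + \oo(1),
\end{equation*}
which immediately establishes \eqref{asymM} for $j=1,2$. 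Finally, \eqref{asympsi} follows from $\Psi_j(z,x) = \Phi_j(z,x)\bigl(M(z) + \Theta_j(z,x)/\Phi_j(z,x)\bigr)$ and the two displays above, which collapse to
\begin{equation*}
\Psi_j(z,x) = -\frac{1}{\Phi_j(z,x)\bigl(2\I + \oo(1)\bigr)} = \frac{\I}{2\Phi_j(z,x)}\bigl(1+\oo(1)\bigr).
\end{equation*}

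The only non-algebraic step, and thus the main obstacle, is confirming the Weyl asymptotic $\tilde{m}_\pm(z,x) = \I + \oo(1)$ at the arbitrary base point $x$ rather than at $c$. This is a routine adaptation of the high-energy expansion of the fundamental solutions $c(z,\cdot)$, $s(z,\cdot)$ already quoted in the proof of the previous lemma, but it is the point where the $q_{\rm el}\equiv 0$ normalization is genuinely used; everything else reduces to algebra and the Wronskian normalizations $W(\Theta,\Phi)=1$ and $W(\Phi,\Psi)=-1$.
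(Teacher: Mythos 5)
Your argument is correct and follows the same route as the cited Schr\"odinger analogue in \cite{kst2}: rewrite $M(z)$ in terms of the $m$-function at the interior base point $x$, use the Wronskian normalization $\Theta_1\Phi_2-\Theta_2\Phi_1=1$ to isolate $M+\Theta_j/\Phi_j$ with a denominator involving $\tilde m_+(z,x)-\Phi_2(z,x)/\Phi_1(z,x)$ (resp.\ $\Phi_1/\Phi_2-1/\tilde m_+$), then invoke $\tilde m_\pm(z,x)=\I+\oo(1)$ so that both denominators are $2\I+\oo(1)$, after which \eqref{asymM} and \eqref{asympsi} follow by elementary algebra.

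One small inaccuracy in your closing remark: the asymptotics $\tilde m_\pm(z,x)=\I+\oo(1)$ do not actually depend on the normalization $q_{\rm el}\equiv 0$. The high-energy expansions of $c(z,\cdot)$ and $s(z,\cdot)$ with $q_{\rm el}\not\equiv 0$ merely replace $z(x-c)$ by $z(x-c)-\int_c^x q_{\rm el}$; since both components of $c$ and $s$ acquire the same shift, the ratios defining $\tilde m_\pm$ are unaffected and the limit $\I$ persists. The normalization $q_{\rm el}\equiv 0$ is imposed in Section~\ref{sec:lbmt} for reasons tied to the Borg--Marchenko comparison, not for this asymptotic lemma.
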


In particular, \eqref{asymM} shows that asymptotics of $M(z)$ immediately follow once one
has corresponding asymptotics for the solutions $\Theta(z,x)$ and $\Phi(z,x)$. Moreover, the leading
asymptotics depend only on the values of $Q(x)$ near the endpoint $a$ (and on the choice of $\Theta(z,x)$ and $\Phi(z,x)$).
The following local Borg--Marchenko type uniqueness result shows that the converse is true as well.

In order to state this theorem, let $Q(x)$ and $\tilde{Q}(x)$ be two potentials on two intervals $(a,b)$ and $(a,\tilde{b})$, respectively.
By $H$ and $\tilde{H}$ we denote some corresponding self-adjoint operators with separated boundary conditions.
Furthermore, we will denote all quantities associated with $\tilde{H}$ by adding a twiddle. We will also use
the common short-hand notation $h_1(z) \sim h_2(z)$ to abbreviate the asymptotic relation
$h_1(z) = h_2(z) (1 + \oo(1))$ (or equivalently $h_2(z) = h_1(z) (1 + \oo(1))$) as $|z|\to\infty$
in some specified manner.

\begin{theorem}\label{thmbm}
Suppose $\Theta(z,x)$, $\tilde{\Theta}(z,x)$, $\Phi(z,x)$, $\tilde{\Phi}(z,x)$ are of growth order at most $s$ for some $s\geq 1$ and $\tilde{\Phi}_1(z,x) \sim \Phi_1(z,x)$ for one (and hence by~\eqref{IOphias} for all) $x\in(a,b)\cap(a,\ti{b})$ as $|z|\to\infty$ along some nonreal rays dissecting the complex plane into sectors of opening angles less than $\nicefrac{\pi}{s}$.
Then for each $c\in(a,b)\cap(a,\ti{b})$, the following properties are equivalent:
\begin{enumerate}[label=\emph{(\roman*)}, ref=(\roman*), leftmargin=*, widest=XX]
 \item\label{itbm1} We have $Q(x) = \tilde{Q}(x)$ for almost all $x\in(a,c)$ and $W_a(\Phi,\tilde{\Phi})=0$.
 \item\label{itbm2} For each $\delta>0$ there is an entire function $f(z)$ of growth order at most $s$ such that
  \begin{align*}\tilde{M}(z)-M(z) = f(z) + \OO\left(\frac{1}{\Phi_1(z,c)^{2}}\right),\end{align*}
   as $|z|\rightarrow\infty$ in the sector $|\im(z)|\geq \delta\,|\re(z)|$.
\end{enumerate}
\end{theorem}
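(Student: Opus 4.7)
\emph{Plan.} I would adapt the singular Borg--Marchenko argument of \cite{kst2} (whose Schr\"odinger version originates with Bennewitz \cite{ben}) to the Dirac setting, using the high-energy asymptotics of Lemma~\ref{lemAsymM} and the growth-order framework of Section~\ref{sec:egr} in place of their Schr\"odinger counterparts. The direction \ref{itbm1}$\Rightarrow$\ref{itbm2} is the easy one; \ref{itbm2}$\Rightarrow$\ref{itbm1} carries the main content.

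For \ref{itbm1}$\Rightarrow$\ref{itbm2}, assume $Q=\tilde Q$ on $(a,c)$, so that $\tau=\tilde\tau$ there and $\tilde\Phi(z,\cdot)$ is a real entire solution of $\tau u=zu$ lying in the domain of $H$ near $a$. By Remark~\ref{rem:uniqExp}, necessarily $\tilde\Phi=\E^{g}\Phi$ and $\tilde\Theta=\Theta-h\Phi$ on $(a,c)$ for some entire $g,h$ of growth order at most $s$; the hypothesis $W_a(\Phi,\tilde\Phi)=0$ is what rules out any $\Theta$-component in the first identity. The asymptotic $\tilde\Phi_1\sim\Phi_1$ forces $\E^{g(z)}\to 1$ along a ray, and since no nonconstant polynomial tends to $0$ along any ray, $g\equiv 0$. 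Applying Lemma~\ref{lemAsymM} at $x=c$ to both $M$ and $\tilde M$ and subtracting then gives
\[
\tilde M(z)-M(z)=h(z)+\OO(\Phi_1(z,c)^{-2}),
\]
so \ref{itbm2} holds with $f=h$.

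For \ref{itbm2}$\Rightarrow$\ref{itbm1}, fix $x\in(a,c)$ and introduce
\[
F(z,x):=\Psi(z,x)-\tilde\Psi(z,x)+f(z)\tilde\Phi(z,x).
\]
Rewriting $F=(\Theta-\tilde\Theta)+M(\Phi-\tilde\Phi)+(M+f-\tilde M)\tilde\Phi$ and using \ref{itbm2} to cancel the poles of $\tilde M-M-f$, one checks that $F(\,\cdot\,,x)$ extends to an entire function of growth order at most $s$. Combining \eqref{asympsi}, $\tilde\Phi_1\sim\Phi_1$, the error bound in \ref{itbm2}, and the exponential estimate \eqref{IOphias} shows that $F(z,x)\to 0$ as $|z|\to\infty$ along each prescribed ray. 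Since the sectors so determined have opening $<\pi/s$ and $F$ has order at most $s$, Phragm\'en--Lindel\"of (applied component-wise) yields $F(\,\cdot\,,x)\equiv 0$, i.e.\ $\tilde\Psi=\Psi+f\tilde\Phi$ on $(a,c)$. Applying $\tilde\tau$ to this identity and using $\tilde\tau\tilde\Psi=z\tilde\Psi$, $\tilde\tau\tilde\Phi=z\tilde\Phi$ yields $\tilde\tau\Psi=\tau\Psi$, hence $(Q-\tilde Q)\Psi(z,x)=0$ on $(a,c)$ for all $z$; varying $z$ so that $\Psi(z,x)$ spans $\C^2$ forces $Q=\tilde Q$ a.e.\ on $(a,c)$, and the relation $W_a(\Phi,\tilde\Phi)=0$ then follows by passing to $x\to a$ in the identity.

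\emph{Main obstacle.} The technical heart is the Phragm\'en--Lindel\"of step: the growth-order bound $s$ and the sector-opening threshold $<\pi/s$ are matched exactly so that decay on the bounding rays upgrades to vanishing throughout each sector, but turning the asymptotic inputs into a uniform indicator bound on $F$ along each ray requires careful component-wise tracking of the competing exponential types of $\Phi,\tilde\Phi,\Theta,\tilde\Theta$ against the $\OO(\Phi_1(z,c)^{-2})$ error. A closely related subtlety is verifying that $F$ genuinely extends as an entire function across all of $\sigma(H)\cup\sigma(\tilde H)$: hypothesis \ref{itbm2}, via Lemma~\ref{lemAsymM} and the fact that $\Phi_1(\,\cdot\,,c)$ vanishes on $\sigma(H^D_{(a,c)})$, must be shown to produce the necessary residue cancellations.
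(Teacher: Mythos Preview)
Your direction \ref{itbm1}$\Rightarrow$\ref{itbm2} matches the paper's argument. The problem is in \ref{itbm2}$\Rightarrow$\ref{itbm1}: your test function
\[
F(z,x)=\Psi(z,x)-\tilde\Psi(z,x)+f(z)\tilde\Phi(z,x)
\]
is \emph{not} entire. Hypothesis \ref{itbm2} is a purely asymptotic statement about $\tilde M-M-f$ as $|z|\to\infty$ in a sector; it says nothing about what happens at finite points on the real axis. The poles of $M$ sit on $\sigma(H)$ and those of $\tilde M$ on $\sigma(\tilde H)$, and there is no reason a priori for these spectra to agree (that is part of what you are trying to prove). At an eigenvalue $\lambda_0$ of $H$ which is not an eigenvalue of $\tilde H$, the term $M(z)\Phi(z,x)$ has a genuine pole while $\tilde M(z)\tilde\Phi(z,x)$ and $f(z)\tilde\Phi(z,x)$ are regular, so $F$ has a pole there. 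Since the nonreal rays dissect $\C$ into sectors some of which necessarily cross the real axis, Phragm\'en--Lindel\"of cannot be applied to $F$ on those sectors, and the whole argument collapses. Your own ``closely related subtlety'' is in fact the main gap, and the suggested fix via ``residue cancellations'' does not work: no cancellation occurs.

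The paper avoids this by choosing a test function built \emph{only} from the entire data. For fixed $x\in(a,c)$ and $j=1,2$ it sets
\[
G(z)=\tilde\Phi_j(z,x)\Theta_j(z,x)-\Phi_j(z,x)\tilde\Theta_j(z,x)-f(z)\Phi_j(z,x)\tilde\Phi_j(z,x),
\]
which is manifestly entire of growth order at most $s$. Only \emph{after} that does one rewrite $G$ in terms of $\Psi_j,\tilde\Psi_j$ (valid for $z\in\C\setminus\R$) to read off the decay along the nonreal rays and feed Phragm\'en--Lindel\"of. The endgame is also different from yours: from $G\equiv 0$ one divides by $\Phi_j\tilde\Phi_j$, differentiates in $x$, and uses $W(\Theta,\Phi)=W(\tilde\Theta,\tilde\Phi)=1$ to obtain $\dfrac{(-1)^jz-Q_{11}(x)}{\Phi_j(z,x)^2}=\dfrac{(-1)^jz-\tilde Q_{11}(x)}{\tilde\Phi_j(z,x)^2}$, after which matching poles forces $\Phi_j=\tilde\Phi_j$ and $Q=\tilde Q$ on $(a,c)$.
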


\begin{proof}
If \ref{itbm1} holds, then by Remark~\ref{rem:uniqExp} the solutions are related by
\begin{align}\label{eqnLBMsolPHI}
 \tilde{\Phi}(z,x) = \Phi(z,x)\E^{g(z)}, \quad x\in(a,c],~ z\in\C,
\end{align}
and
\begin{align}\label{eqnLBMsolTHETA}
 \tilde{\Theta}(z,x) = \Theta(z,x) \E^{-g(z)} -f(z)\tilde{\Phi}(z,x), \quad x\in(a,c],~z\in\C,
\end{align}
 for some polynomial $g(z)$ of degree at most $s$ and some real entire function $f(z)$ of growth order at most $s$.
 From the asymptotic behavior of the functions $\Phi_1(z,x)$, $\tilde{\Phi}_1(z,x)$ we infer that $g=0$.
 Now the asymptotics in Lemma~\ref{lemAsymM} show that
 \begin{align*}
  \tilde{M}(z) - M(z) & = \frac{\Theta_1(z,c)}{\Phi_1(z,c)} - \frac{\tilde{\Theta}_1(z,c)}{\tilde{\Phi}_1(z,c)} + \OO\left(\frac{1}{\Phi_1(z,c)^{2}}\right) \\
                  & = f(z) + \OO\left(\frac{1}{\Phi_1(z,c)^{2}}\right),
 \end{align*}
 as $|z|\to\infty$ in any sector $|\im(z)| \ge \delta\, |\re(z)|$.
 
For the converse suppose that property \ref{itbm2} holds and for every fixed $x\in(a,c)$ and $j=1$, $2$ consider the entire function 
\begin{align*}
 G(z) = \tilde{\Phi}_j&(z,x)  \Theta_j(z,x) - \Phi_j(z,x) \tilde{\Theta}_j(z,x) - f(z)\Phi_j(z,x) \tilde{\Phi}_j(z,x), \quad z\in\C.
\end{align*}
Since away from the real axis this function may be written as
\begin{align*}
 G(z) & = \tilde{\Phi}_j(z,x) \Psi_j(z,x) - \Phi_j(z,x) \tilde{\Psi}_j(z,x) \\
      & \qquad\qquad + (\tilde{M}(z)-M(z) - f(z)) \Phi_j(z,x) \tilde{\Phi}_(z,x), \quad z\in\C\backslash\R,
\end{align*}
it vanishes as $|z|\to\infty$ along our nonreal rays. In fact, for the first two terms this
follows from \eqref{asympsi} together with our hypothesis that $\Phi_j(\,\cdot\,,x)$ and $\tilde{\Phi}_j(\,\cdot\,,x)$
have the same asymptotics. The last term tends to zero because of our assumption on the difference of the Weyl functions.
Moreover, by our hypothesis, $G$ is of growth order at most $s$ and thus we
can apply the Phragm\'en--Lindel\"of theorem (e.g., \cite[Section~6.1]{lev}) in the sectors bounded by our rays.
Thus $G$ is bounded on all of $\C$ and by Liouville's theorem it must be zero (since it vanishes along a ray); that is,
\[
\tilde{\Phi}_j(z,x) \Theta_j(z,x) - \Phi_j(z,x) \tilde{\Theta}_j(z,x) = f(z)\Phi_j(z,x)\tilde{\Phi}_j(z,x), \quad z\in\C.
\]
Dividing both sides of this identity by $\Phi_j(z,x)\tilde{\Phi}_j(z,x)$, differentiating with respect to $x$, and using $W(\Theta,\Phi)=W(\tilde{\Theta},\tilde{\Phi})=1$ shows 
\[
\frac{(-1)^j z - Q_{11}(x)}{\Phi_j(z,x)^2} = \frac{(-1)^j z - \tilde{Q}_{11}(x)}{\tilde{\Phi}_j(z,x)^2}, \quad z\in\C\backslash\R,
\]
for $j=1$, $2$ and almost all $x\in(a,c)$. 
Hence the poles on both sides must coincide, implying $\Phi_j(z,x)= \tilde{\Phi}_j(z,x)$ as well as $Q_{11}(x)=\tilde{Q}_{11}(x)$ for almost all $x\in(a,c)$.
But this also implies $W_a(\Phi,\tilde{\Phi})$ and $Q_{12}(x)=\tilde{Q}_{12}(x)$ finishing the proof.
\end{proof}

Note that the implication \ref{itbm2} $\Rightarrow$ \ref{itbm1} could also be proved under somewhat weaker conditions.
First of all the assumption on the growth of the entire functions $f(z)$ is only due to the use of the Phragm\'{e}n--Lindel\"{o}f principle.
Hence it would also suffice that for each $\eps>0$ we have
\begin{align}\label{eqnbmaltgrowth}
\sup_{|z|=r_n} |f(z)| \leq B \E^{A r_n^{s+\eps}},
\end{align}
for some increasing sequence of positive numbers $r_n\uparrow\infty$ and constants $A$, $B\in\R$.
Furthermore, for this implication to hold it would also suffice that the solutions have the same order of magnitude as $|z|\rightarrow\infty$ along our nonreal rays instead of the same asymptotics. 
Lastly, it would also be enough to only know the asymptotics of the difference of the Weyl functions in \ref{itbm2} along the nonreal rays. 

While at first sight it might look like the condition on the asymptotics of the solutions $\Phi_1(z,x)$ and $\tilde{\Phi}_1(z,x)$ requires knowledge
about them, this is not the case, since the high energy asymptotics will only involve some qualitative information
on the kind of the singularity at $a$.
Next, the appearance of the additional freedom of the function $f(z)$ just reflects the fact that we only ensure the same normalization
for the solutions $\Phi(z,x)$ and $\tilde{\Phi}(z,x)$ but not for $\Theta(z,x)$ and $\tilde{\Theta}(z,x)$ (cf.\ Remark~\ref{rem:uniqExp}).

\begin{corollary}\label{corbm}
Suppose $\Theta(z,x)$, $\tilde{\Theta}(z,x)$, $\Phi(z,x)$, $\tilde{\Phi}(z,x)$ are of growth order at most $s$ for some $s\geq 1$ and  $\tilde{\Phi}_1(z,x) \sim \Phi_1(z,x)$ for some $x\in(a,b)\cap(a,\ti{b})$ as $|z|\to\infty$ along some nonreal rays dissecting the complex plane into sectors of opening angles less than $\nicefrac{\pi}{s}$.
If
\begin{align}\label{eqncorbm}
 \tilde{M}(z) - M(z) = f(z), \quad z\in\C\backslash\R,
\end{align}
for some entire function $f(z)$ of growth order at most $s$, then $H= \tilde{H}$.
\end{corollary}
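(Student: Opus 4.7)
My plan is to combine Theorem~\ref{thmbm} with the spectral transformation of Theorem~\ref{thm:sptr} in order to identify the two operators. First I would observe that the identity $\tilde{M}(z) - M(z) = f(z)$ trivially satisfies hypothesis~\ref{itbm2} of Theorem~\ref{thmbm} (with vanishing $\OO$-term) at every $c \in (a,b)\cap(a,\tilde{b})$. Applying that theorem at every such $c$ yields $Q(x) = \tilde{Q}(x)$ almost everywhere on $(a,\min(b,\tilde{b}))$ together with $W_a(\Phi,\tilde\Phi) = 0$. Moreover, inspecting the pole-matching step in the proof of Theorem~\ref{thmbm}, one finds the stronger identity $\Phi(z,x) = \tilde{\Phi}(z,x)$ throughout the common interval, so in view of $\tilde{M}=M+f$ also $\tilde{\Theta}(z,x) = \Theta(z,x) - f(z)\Phi(z,x)$ there.

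Second, I would verify that the spectral measures coincide. Since $M(z^*) = M(z)^*$ and the analogous relation holds for $\tilde M$, the entire function $f = \tilde M - M$ is real on the real axis, so $\im(f(\lam + \I\eps))$ tends to zero uniformly on compact subsets of $\R$ as $\eps\downarrow 0$. Applying the Stieltjes--Liv\v{s}i\'c formula~\eqref{defrho} to both $M$ and $\tilde M$ and subtracting, one concludes $\tilde\rho = \rho$ as measures.

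Third, I would identify the operators via the spectral transforms. By Theorem~\ref{thm:sptr}, both $U: L^2((a, b), \C^2) \to L^2(\R, d\rho)$ and $\tilde U: L^2((a, \tilde b), \C^2) \to L^2(\R, d\rho)$ are unitary and intertwine their respective operators with multiplication by $\lam$. Because $\Phi = \tilde\Phi$ on the common interval, the defining formula~\eqref{eqhatf} gives $Uf = \tilde U(Jf)$ for every $f$ compactly supported in $(a,\min(b,\tilde b))$, where $J$ denotes extension by zero. By density this extends to all of $L^2((a,\min(b,\tilde b)),\C^2)$. Since $U$ is surjective onto $L^2(\R, d\rho)$, so is $\tilde U \circ J$; combined with the injectivity of $\tilde U$, this forces $J$ to be surjective, which is only possible when $b = \tilde b$. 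Then $U = \tilde U$, and the intertwining property gives $H = \tilde H$.

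The step I expect to be the main obstacle is the surjectivity argument forcing $b = \tilde b$: it depends essentially on both $U$ and $\tilde U$ being genuine unitaries onto the \emph{same} target space $L^2(\R, d\rho)$, which is exactly why the preliminary equality $\tilde \rho = \rho$ is indispensable. Once that is in hand, the conclusion $H=\tilde H$ is an automatic consequence of the spectral representation.
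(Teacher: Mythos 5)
Your proof is correct but follows a genuinely different route from the paper's. Both begin identically by feeding hypothesis~\eqref{eqncorbm} into Theorem~\ref{thmbm} (for every $c$) and extracting $Q=\tilde Q$ and $\Phi=\tilde\Phi$ on the common interval. From there the approaches diverge. The paper stays entirely within the Weyl-solution framework: it combines the relations $\tilde\Phi = \Phi$ and $\tilde\Theta = \Theta - f\Phi$ with $\tilde M = M+f$ to conclude $\tilde\Psi(z,x) = \Psi(z,x)$ on $(a,\min(b,\tilde b))$; it then rules out $b<\tilde b$ by observing that $b$ would become a regular endpoint of $H$ (since $\tilde Q$ is integrable up to $b$), forcing the Weyl solution $\Psi$ to satisfy a fixed boundary condition there, which is impossible; finally $\Psi=\tilde\Psi$ identifies the boundary conditions and hence the operators. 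You instead pass to the spectral representation: you note that the entire function $f$ is real on $\R$, so the Stieltjes--Liv\v{s}i\'c inversion yields $\rho = \tilde\rho$; then, exploiting $\Phi = \tilde\Phi$, you factor the unitary $U$ through $\tilde U$ composed with extension-by-zero $J$, and deduce $b=\tilde b$ from the abstract surjectivity argument, after which $U = \tilde U$ intertwines both operators with the same multiplication operator. Each approach has its merits: the paper's argument is lighter, avoiding the spectral transform entirely and staying at the level of solutions; yours is more structural, brings out explicitly the intermediate fact $\rho=\tilde\rho$ (which is of independent interest and is precisely the input for Theorem~\ref{thmSpectFuncDisc}), and replaces the somewhat delicate ``$\Psi$ cannot satisfy a boundary condition at a regular point'' argument by a clean functional-analytic surjectivity count. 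One cosmetic remark: the identity $\tilde\Theta = \Theta - f\Phi$ that you record is never actually used in your argument; only $\Phi=\tilde\Phi$ and $\rho=\tilde\rho$ are needed.
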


\begin{proof}
Without loss of generality, we may suppose that $b\leq \ti{b}$.
Then Theorem~\ref{thmbm} shows that $Q(x)=\tilde{Q}(x)$ for almost all  $x\in(a,b)$ and that the boundary condition at $a$ (if any) is the same.
As in the proof of Theorem~\ref{thmbm} one has~\eqref{eqnLBMsolPHI} as well as~\eqref{eqnLBMsolTHETA} with $g=0$ (note that the function $f$ in~\eqref{eqnLBMsolPHI} turns out to be the same as the one in ~\eqref{eqncorbm}) 
and hence
\begin{align*}
  \tilde{\Psi}(z,x) & = \tilde{\Theta}(z,x) + \tilde{M}(z)\tilde{\Phi}(z,x) 
               = \Theta(z,x) - f(z)\tilde{\Phi}(z,x) + (M(z)+f(z))\Phi(z,x) \\
              & = \Theta(z,x) + M(z)\Phi(z,x) = \Psi(z,x),
\end{align*}
for every $x\in(a,b)$ and $z\in\C\backslash\R$.
If $b<\tilde{b}$, then the right endpoint $b$ of $H$ would be regular as $\tilde{Q}$ is integrable over $[c,b]$.
Therefore, $\Psi(z,x)$ and thus also $\tilde{\Psi}(z,x)$ would satisfy some boundary condition at $b$, which is not possible. 
Hence we necessarily have $b= \tilde{b}$ and finally, since $\Psi(z,x)=\tilde{\Psi}(z,x)$, $H$ and $\tilde{H}$ also have the same boundary condition at $b$ (if any).
\end{proof}

Note that instead of assumption~\eqref{eqncorbm} it would also suffice to presume that for each fixed value $c\in(a,b)\cap(a,\tilde{b})$ one has 
\begin{align}
 M(z) - \tilde{M}(z) = f(z) + \OO\left(\frac{1}{\Phi_1(z,c)^2}\right),
\end{align}
as $|z|\rightarrow\infty$ along our nonreal rays and $\tilde{M}(z_0)= M(z_0)+f(z_0)$ for some $z_0\in\C\backslash\R$.

\section{Applications to perturbed radial Dirac operators}\label{secPertRD}

In this section we investigate perturbations of the free radial Dirac operator from Section \ref{sec4} with symmetric real-valued potentials $Q(x)$ on an interval $(0,b)$ satisfying 
\be\label{hyp:pertrad}
Q(x) = Q_\kappa(x) + P(x), \quad
\begin{cases}
P(x) \in L^1_{loc}[0,b), & \kappa \ne \frac{1}{2},\\
(1+|\log(x)|) P(x) \in L^1_{loc}[0,b), & \kappa=\frac{1}{2}.
\end{cases}
\ee
For notational simplicity, we choose $m=0$ as it can be absorbed in $P(x)$ anyway. 

Under these assumptions, the regular solution of $\tau u = z u$ will satisfy
\be\label{eqvarconst}
\Phi(z,x) = \Phi_\kappa(z,x) + \int_0^x K(z,x,y) P(y) \Phi(z,y) dy, 
\ee
where
\be
K(z,x,y) = \begin{pmatrix}
z K_\kappa(z^2,x,y) & \left(- \frac{\partial}{\partial x} + \frac{\kappa}{x}\right) K_{\kappa-1}(z^2,x,y)\\
\left(\frac{\partial}{\partial x} + \frac{\kappa}{x}\right) K_\kappa(z^2,x,y) & z K_{\kappa-1}(z^2,x,y)
\end{pmatrix}
\ee
and
\be
K_l(z^2,x,y) = \begin{cases}
\phi_l(z^2,x) \theta_l(z^2,y) - \phi_l(z^2,y) \theta_l(z^2,x),& l\ge -\frac{1}{2},\\
-K_{-1-l}(z^2,x,y), & l\in [-1,-\frac{1}{2}).
\end{cases}
\ee
For its investigation we will need the following standard estimates (see, e.g., Appendix A in \cite{kst}).

\begin{lemma}[\cite{kst}]\label{lem:estbes}
For every $l> - \frac{1}{2}$ there is a constant $C$ such that for every $z\in\C$ and $0 < y \leq x\le 1$ the following estimates hold:
\begin{align}\label{estphil}
\left|\phi_l(z^2,x)\right| &\leq C \left(\frac{x}{1+ |z| x}\right)^{l+1} \E^{|\im(z)| x},\\
\left| \frac{\partial}{\partial x} \phi_l(z^2,x)\right| &\leq C \left(\frac{x}{1+ |z| x}\right)^{l} \E^{|\im(z)| x},\\ \label{estGl}
\left|K_l(z^2,x,y)\right| &\leq C \left(\frac{x}{1+ |z| x}\right)^{l+1} \left(\frac{1+ |z| y}{y}\right)^l \E^{|\im(z)| (x-y)},\\
\left|\frac{\partial}{\partial x} K_l(z^2,x,y)\right| &\leq C \left(\frac{x}{1+ |z| x}\right)^l \left(\frac{1+ |z| y}{y}\right)^l \E^{|\im(z)| (x-y)}.
\end{align}
For the case $l=-\frac{1}{2}$, one has to replace the estimates for $K_l$ by
\begin{align}\label{a8}
\left|K_{-1/2}(z^2,x,y)\right| &\leq C \left(\frac{x}{1+ |z| x}\right)^{1/2} \left(\frac{y}{1+ |z| y}\right)^{1/2}
\\ \nn & \qquad\qquad\qquad\qquad \times
\E^{|\im(z)| (x-y)}(1-\log(y)),\\
\left|\frac{\partial}{\partial x}K_{-1/2}(z^2,x,y)\right| & \leq C \left(\frac{1+ |z| x}{x}\right)^{1/2} \left(\frac{y}{1+ |z| y}\right)^{1/2}
\\ \nn & \qquad\qquad\qquad\qquad \times
\E^{|\im(z)| (x-y)} (1-\log(y)).
\end{align}
\end{lemma}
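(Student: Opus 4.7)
The plan is to reduce everything to classical pointwise bounds on the Bessel functions $J_{l+1/2}$ and $Y_{l+1/2}$ via the explicit representations~\eqref{defphil} and~\eqref{defthetal}. Since the lemma is quoted from \cite{kst}, one expects the core of the argument to be purely an exercise in special function estimates; the intended proof is to cite \cite[Appendix~A]{kst}, but here is how I would carry it out directly.

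First I would establish a unified bound of the form
\begin{equation*}
|J_\nu(w)| \le C\,\frac{|w|^\nu}{(1+|w|)^{\nu+1/2}}\,\E^{|\im(w)|}, \qquad \nu > -\tfrac{1}{2},
\end{equation*}
together with the companion derivative bound obtained from $\frac{d}{dw}(w^{-\nu}J_\nu(w))=-w^{-\nu}J_{\nu+1}(w)$ and $\frac{d}{dw}(w^\nu J_\nu(w))=w^\nu J_{\nu-1}(w)$. This is proven by splitting into $|w|\le 1$, where one uses the convergent power series $J_\nu(w)=\sum_k \frac{(-1)^k}{k!\,\Gamma(\nu+k+1)}(w/2)^{\nu+2k}$, and $|w|\ge 1$, where one uses the standard integral representation giving the $|w|^{-1/2}\E^{|\im(w)|}$ asymptotics. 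The two regimes are patched at $|w|\sim 1$.

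Next I would substitute $w=\sqrt{\zeta}\,x$ with $\zeta=z^2$ and apply these estimates to~\eqref{defphil}. The prefactor $\zeta^{-(2l+1)/4}$ in $\phi_l$ exactly cancels the factor $|w|^{l+1/2}$ from the small-argument behaviour of $J_{l+1/2}$, so a direct insertion yields~\eqref{estphil}; the derivative estimate follows from the recurrence applied to $x\,\phi_l$. For $K_l$, one writes the kernel as a product difference and applies the bound~\eqref{estphil} to $\phi_l(\zeta,x)$ and the analogous bound for $\theta_l$ (which uses $J_{-l-1/2}$ or $Y_{l+1/2}$, but has the opposite-signed $\zeta$-prefactor, so interchanging the roles of the two quantities $x/(1+|z|x)$ and $(1+|z|y)/y$ in small/large argument regimes produces~\eqref{estGl}). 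The inequality $|\im(z)|(x-y)\ge 0$ for $y\le x$ is used to consolidate the exponential factors.

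The delicate part is the case $l=-\tfrac{1}{2}$, where the formula for $\theta_l$ involves $\log(\zeta)\,J_{l+1/2}$ and where $Y_{1/2}$ itself contains a logarithmic singularity upon pairing with $J_{1/2}$ in the Wronskian structure of $K_{-1/2}$. Here one needs a sharper analysis: using $Y_{1/2}(w)=-J_{-1/2}(w)$ together with the defining formula for $\theta_{-1/2}$, the log-contribution is tracked as $-\frac{1}{\pi}\log(\zeta)\phi_{-1/2}$, and substituting $\sqrt{\zeta}\sim 1/y$ produces the $(1-\log y)$ factor in~\eqref{a8}. This is the main obstacle: making the logarithmic dependence explicit so that the factor $1-\log(y)$ emerges uniformly in $z$, rather than merely as a $\OO(\log(1+|z|y))$ correction. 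Once this endpoint case is settled, combining with the generic $l>-\tfrac{1}{2}$ estimates yields the full lemma.
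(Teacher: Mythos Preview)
The paper does not prove this lemma; it simply cites \cite[Appendix~A]{kst}, as you correctly note, and your outline for $\phi_l$ and its derivative via the uniform bound $|J_\nu(w)|\le C|w|^\nu(1+|w|)^{-\nu-1/2}\E^{|\im w|}$ is the standard one and works.

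There is, however, a genuine gap in your treatment of $K_l$. Bounding $\phi_l$ and $\theta_l$ separately and multiplying gives at best
\[
|\phi_l(z^2,x)\,\theta_l(z^2,y)| \le C\left(\frac{x}{1+|z|x}\right)^{l+1}\left(\frac{1+|z|y}{y}\right)^{l}\E^{|\im z|(x+y)},
\]
since $\theta_l(\,\cdot\,,y)$ is itself entire of exponential type $y$ in $z$ and genuinely grows like $\E^{|\im z|y}$. The target \eqref{estGl} carries the sharper factor $\E^{|\im z|(x-y)}$, and no inequality of the form $x-y\ge 0$ bridges that; moreover, this is precisely the exponent needed for the iteration in Lemma~\ref{lemPRDPhi} to converge. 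One must exploit the cancellation in the difference $\phi_l(x)\theta_l(y)-\phi_l(y)\theta_l(x)$, and the standard way to do so is to replace $\theta_l$ by the Weyl solution $\psi_l=\theta_l+m_l\phi_l$ (which leaves the difference unchanged) and use the Hankel-function decay $|\psi_l(z^2,y)|\le C\bigl((1+|z|y)/y\bigr)^l\E^{-|\im z|y}$ in each half-plane separately. That substitution is the missing ingredient. A smaller slip: for $l=-\tfrac12$ the order is $l+\tfrac12=0$, so one works with $J_0$ and $Y_0$, not $Y_{1/2}$; the logarithm in \eqref{a8} comes from the small-argument behavior $Y_0(w)\sim\tfrac{2}{\pi}\log w$ together with the explicit $\log\zeta$ subtraction in \eqref{defthetal}, not from the half-integer identity you quote.
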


Based on these estimates, we can solve \eqref{eqvarconst} in the usual way. 

\begin{lemma}\label{lemPRDPhi}
If the potential $Q(x)$ is of the form~\eqref{hyp:pertrad}, then the integral equation \eqref{eqvarconst} has a unique solution satisfying
\begin{equation}\label{eq:estphi}
|\Phi(z,x)-\Phi_\kappa(z,x)|\le C\,  I(x) \left(\frac{x}{1+ |z| x}\right)^{\kappa} \E^{|\im(z)| x}
\end{equation}
near zero for some constant $C>0$, where 
\be
I(x) = \int_0^x \|P(r)\| \begin{cases}
 dr, & \kappa \ne \frac{1}{2},\\
 (1-\log(r)) dr, & \kappa = \frac{1}{2}.
\end{cases}
\ee
Hereby, $\|.\|$ is the operator norm corresponding to the euclidean norm on $\C^2$. 
\end{lemma}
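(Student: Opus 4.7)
The plan is to solve the Volterra integral equation~\eqref{eqvarconst} by the standard method of successive approximations. I set $\Phi^{(0)}(z,x) = \Phi_\kappa(z,x)$ and define inductively
\begin{equation*}
\Phi^{(n)}(z,x) = \Phi_\kappa(z,x) + \int_0^x K(z,x,y) P(y) \Phi^{(n-1)}(z,y)\,dy.
\end{equation*}
First I use Lemma~\ref{lem:estbes} (combined with the symmetry $K_l = -K_{-1-l}$ when $l \in [-1,-\nicefrac{1}{2})$ in order to handle $\kappa \in [0,\nicefrac{1}{2})$) to bound the four entries of the matrix $K(z,x,y)$. The goal is the estimate
\begin{equation*}
\|K(z,x,y)\|\cdot|\Phi_\kappa(z,y)| \le C\, w(y) \left(\frac{x}{1+|z|x}\right)^\kappa \mathrm{e}^{|\im(z)|(x-y)},
\end{equation*}
where $w(y)=1$ for $\kappa \ne \nicefrac{1}{2}$ and $w(y)= 1-\log(y)$ for $\kappa = \nicefrac{1}{2}$ (the latter forced by the logarithmically enhanced bound~\eqref{a8}). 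To obtain the $(x/(1+|z|x))^\kappa$ factor, I combine the bound $|\phi_\kappa(\zeta,y)| \le C(y/(1+|z|y))^{\kappa+1}\mathrm{e}^{|\im(z)|y}$ (which absorbs the prefactor $z+m=z$ in the first component of $\Phi_\kappa$) with the bound~\eqref{estGl} for $K_\kappa$ and $K_{\kappa-1}$, noticing that the products telescope since $(1+|z|y)/y$ from the kernel cancels against the $y/(1+|z|y)$ coming from $\Phi_\kappa$.

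With this kernel estimate in hand, an induction on $n$ using Fubini gives
\begin{equation*}
|\Phi^{(n)}(z,x) - \Phi^{(n-1)}(z,x)| \le \frac{(C\,I(x))^n}{n!} \left(\frac{x}{1+|z|x}\right)^\kappa \mathrm{e}^{|\im(z)|x},
\end{equation*}
because the $n$-fold nested integral of $\|P(y_j)\| w(y_j)$ equals $I(x)^n/n!$. Summing the telescoping series $\Phi(z,x) = \Phi_\kappa(z,x) + \sum_{n\ge 1}(\Phi^{(n)}(z,x) - \Phi^{(n-1)}(z,x))$ produces a solution to~\eqref{eqvarconst}, and the tail bound $\sum_{n\ge 1} (CI(x))^n/n! \le C\, I(x)\,\mathrm{e}^{CI(x)}$ (which is $\OO(I(x))$ on any compact subinterval of $[0,b)$) yields~\eqref{eq:estphi}. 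Uniqueness is immediate from the Volterra structure: the difference of two solutions satisfies a homogeneous Volterra equation with the same kernel, and iterating the kernel estimate $n$ times forces the difference to vanish.

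The main obstacle is executing the kernel bound uniformly across the three regimes of $\kappa$. For $\kappa > \nicefrac{1}{2}$ both $K_\kappa$ and $K_{\kappa-1}$ are controlled by~\eqref{estGl} with parameter $\ge -\nicefrac{1}{2}$, so the calculation is direct. The borderline case $\kappa = \nicefrac{1}{2}$ loses a logarithm from~\eqref{a8}, which is precisely why the hypothesis~\eqref{hyp:pertrad} absorbs it into $I(x)$. The delicate case is $\kappa \in [0,\nicefrac{1}{2})$, where $K_{\kappa-1}$ must first be rewritten via $K_{\kappa-1}=-K_{-\kappa}$ before the Bessel estimates apply, and where one must check that the off-diagonal entries $(\pm\partial_x + \kappa/x)K_l$ do not generate a $1/y$ singularity incompatible with local integrability of $\|P(y)\|$. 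Once this bookkeeping is done, the iteration argument goes through as described.
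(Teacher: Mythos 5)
Your proof is essentially the same as the paper's: Picard iteration from $\Phi_\kappa$, the Bessel-kernel estimates of Lemma~\ref{lem:estbes} (with the reflection $K_{\kappa-1}=-K_{-\kappa}$ when $\kappa<\nicefrac{1}{2}$ and the logarithmic loss at $\kappa=\nicefrac{1}{2}$ absorbed into $I(x)$), a factorial gain from the nested integral of $\|P\|w$, and summation of the resulting series. One slip worth flagging: the exponential in your displayed kernel estimate should be $\E^{|\im(z)|x}$ rather than $\E^{|\im(z)|(x-y)}$, since $|\Phi_\kappa(z,y)|$ already contributes a factor $\E^{|\im(z)|y}$ (your induction step in fact uses the correct exponent), and the bookkeeping you defer for $\kappa\in[0,\nicefrac{1}{2})$ is resolved in the paper by one explicit use of the monotonicity $y/(1+|z|y)\le x/(1+|z|x)$ for $0<y\le x$.
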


\begin{proof}
 Using the estimates from Lemma~\ref{lem:estbes}, we immediately get 
\begin{align*}
\left|\Phi_{\kappa}(z,x)\right| &\leq C \left(\frac{x}{1+ |z| x}\right)^{\kappa} \E^{|\im(z)| x}
\end{align*}
for some constant $C>0$, all $z\in\C$ and $0<x\leq 1$ as well as
\begin{align*}
 \left\|K(z,x,y)\right\| &\leq C \left(\frac{x}{1+ |z| x}\right)^{\kappa} \left(\frac{1+ |z| y}{y}\right)^{\kappa} \E^{|\im(z)| (x-y)} \begin{cases} 1, & \kappa\neq\frac{1}{2}, \\
(1-\log(y)),& \kappa=\frac{1}{2}, 
\end{cases}
\end{align*}
 for all $z\in\C$ and $0<y\leq x \leq 1$. 
Hereby, we have also employed the inequality $\frac{y}{1+|z|y}\le \frac{x}{1+|z|x}$ in the case $\kappa\in [0,\frac{1}{2})$. 
Now we can construct $\Phi(z,x)$ using the standard iterative procedure. Namely, let
\begin{align}\label{eq:Phin}
\Phi(z,x) & =\sum_{n=0}^\infty\Phi^n(z,x), & \Phi^n(z,x) & =\int_0^x K(z,x,y) P(y)\Phi^{n-1}(z,y)dy,
\end{align}
and $\Phi^0(z,x)=\Phi_\kappa(z,x)$. 
Using induction and the estimates from above one shows 
\begin{align*}
\left|\Phi^n(z,x)\right| \le \frac{C^{n+1}}{n!}\left(\frac{x}{1+ |z| x}\right)^{\kappa} \E^{|\im(z)| x} I(x)^n
\end{align*}
near zero. 
Therefore, the sum in~\eqref{eq:Phin} converges uniformly near zero (and also locally uniformly in $z\in\C$) to a solution of the integral equation~\eqref{eqvarconst} with the required estimate~\eqref{eq:estphi}. 
\end{proof}

Since it is readily verified that a solution of the integral equation~\eqref{eqvarconst} is a solution of $\tau u = zu$, Lemma~\ref{lemPRDPhi} gives rise to a real entire solution $\Phi(z,x)$. 
In the following, we will also need the asymptotics of this solution as $\im(z)\to\infty$. 
The next result was first shown in \cite{ser} for the special case $\kappa\in\N_0$.
We give a streamlined proof which works for every $\kappa\geq 0$.

\begin{lemma}\label{lem:asphi}
If the potential $Q(x)$ is of the form in~\eqref{hyp:pertrad} with its trace normalized to zero, then the solution of \eqref{eqvarconst} has the asymptotics
\be\label{asymPhiprd}
|\Phi(z,x)-\Phi_\kappa(z,x)| = \oo\bigl(|z|^{-\kappa}\E^{|\im(z)| x}\bigr)
\ee
as $|z|\to\infty$ for all $x$ near zero. In particular, 
\begin{align}
\Phi(z,x) \sim  
(\pm z)^{-\kappa}\begin{pmatrix}\sin\bigl(z x \mp \frac{\kappa \pi}{2}\bigr)\\ \cos\bigl(z x \mp \frac{\kappa \pi}{2}\bigr)\end{pmatrix}
\end{align}
as $|z|\to\infty$ in any sector $|\arg(\pm z)| < \pi - \delta$.
\end{lemma}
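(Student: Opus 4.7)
The plan is to refine the $\OO(|z|^{-\kappa}\E^{|\im(z)|x})$ bound implicit in Lemma~\ref{lemPRDPhi} into a genuine $\oo$-estimate, by combining the high-energy asymptotics of the Bessel functions in~\eqref{defphil}--\eqref{defthetal} with the trace-zero hypothesis on $P$. Iterating the Volterra equation~\eqref{eqvarconst} once gives
\begin{equation*}
\Phi(z,x) - \Phi_\kappa(z,x) = \int_0^x K(z,x,y) P(y) \Phi_\kappa(z,y)\,dy + \int_0^x K(z,x,y) P(y) (\Phi(z,y)-\Phi_\kappa(z,y))\,dy,
\end{equation*}
and the remainder integral is controlled by the iteration estimate behind Lemma~\ref{lemPRDPhi} by $C I(x)^2 |z|^{-\kappa}\E^{|\im(z)|x}$; since $I(x)\to 0$ as $x\to 0^+$, this is $\le \eps |z|^{-\kappa}\E^{|\im(z)|x}$ for $x$ in a sufficiently small neighbourhood of zero.

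For the leading integral I would split the domain at some $x_1\in(0,x)$: the part over $(0,x_1]$ is bounded directly by $CI(x_1)|z|^{-\kappa}\E^{|\im(z)|x}$ via Lemmas~\ref{lemPRDPhi} and~\ref{lem:estbes}, which is $<\eps |z|^{-\kappa}\E^{|\im(z)|x}$ if $x_1$ is chosen small enough. On $[x_1,x]$, where $|z|y$ is large, I would insert the asymptotics $\phi_l(z^2,y)\sim z^{-l-1}\sin(zy-l\pi/2)$, $a_\kappa^*\phi_\kappa(z^2,y)\sim z^{-\kappa}\cos(zy-\kappa\pi/2)$ and $K_l(z^2,x,y)\sim z^{-1}\sin(z(x-y))$ coming from the classical asymptotics of $J_\nu$. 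A direct expansion of $K(z,x,y) P(y) \Phi_\kappa(z,y)$ using the matrix entries listed below~\eqref{eqvarconst} together with standard product-to-sum identities reveals that its leading $|z|^{-\kappa}$-part splits as
\begin{equation*}
|z|^{-\kappa}\bigl[\alpha(zx)\,\tr P(y) + \beta(z,x,y)\bigr],
\end{equation*}
where $\alpha(zx)$ depends only on $zx$ (through $\sin(zx\mp\kappa\pi/2)$ and $\cos(zx\mp\kappa\pi/2)$) and $\beta(z,x,y)$ is a linear combination of $\cos(z(x-2y)+\kappa\pi/2)$, $\sin(z(x-2y)+\kappa\pi/2)$ with coefficients built from the off-trace entries of $P(y)$. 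The trace-zero hypothesis annihilates the first summand.

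What remains are oscillatory integrals of the form $\int_{x_1}^x \E^{\pm\I z(x-2y)} P(y)\,dy$. For real $z\to\pm\infty$, the Riemann--Lebesgue lemma applied to $P\in L^1[x_1,x]$ yields $\oo(1)$. For $|\im(z)|\to\infty$, writing $\cos$ and $\sin$ as sums of complex exponentials, one exponential is dominated by $\E^{-2|\im(z)|x_1}\E^{|\im(z)|x}$ and hence is exponentially small relative to $\E^{|\im(z)|x}$, while the other, after the substitution $u=x-y$, takes the form $\E^{|\im(z)|x}\int_0^{x-x_1}\E^{-2|\im(z)|u}P(x-u)\,du$ and is $\oo(\E^{|\im(z)|x})$ by dominated convergence. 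Combining establishes~\eqref{asymPhiprd}. The explicit asymptotic then follows by inserting the classical $J_\nu$ asymptotics, valid uniformly in $|\arg(w)|<\pi-\delta$, into~\eqref{defphil} and~\eqref{eq:phi_dirac} to obtain $\Phi_\kappa(z,x)\sim z^{-\kappa}(\sin(zx-\kappa\pi/2),\cos(zx-\kappa\pi/2))^{\top}$ in the sector $|\arg(z)|<\pi-\delta$, and analogously in $|\arg(-z)|<\pi-\delta$ after keeping track of the branch of $\sqrt{z^2}$ across the negative real axis.

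The main obstacle is the matrix-level computation that identifies the $y$-independent leading part of $KP\Phi_\kappa$ as $\alpha(zx)\,\tr P(y)$: without this cancellation one would only obtain $\OO(|z|^{-\kappa}\E^{|\im(z)|x})$, not the desired $\oo$. Handling the residual oscillatory integrals uniformly across both the real and the imaginary directions of $z\to\infty$ also requires combining two rather different tools (Riemann--Lebesgue on the one hand and a Laplace/dominated-convergence argument on the other).
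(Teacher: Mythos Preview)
Your treatment of the first iterate $\Phi^1=\int_0^x K P\Phi_\kappa$ is reasonable and takes a route different from the paper: you insert the large-$|z|$ Bessel asymptotics, identify the non-oscillatory (in $y$) leading contribution as proportional to $\tr P(y)$ and $P_{12}-P_{21}$ (both zero by hypothesis), and then handle the oscillatory remainder via Riemann--Lebesgue and Laplace-type arguments. The paper instead first assumes $P\in C^1$ and integrates by parts, computing the antiderivatives in closed form via the Bessel recursion identities~\eqref{eq:k12}--\eqref{eq:k21}; this produces an \emph{explicit} gain of order $T(z,x)/|z|$ with $T(z,x)=\log(1+|z|x)$, which then propagates through every iterate $\Phi^n$, and general $P$ follows by $L^1$-approximation by smooth potentials. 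The paper's route yields a quantitative rate; yours is more direct but only gives a bare $\oo(1)$.

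There is, however, a genuine gap in your handling of the remainder $\int_0^x K P(\Phi-\Phi_\kappa)$. The bound $C I(x)^2|z|^{-\kappa}\E^{|\im(z)|x}$ is, for each \emph{fixed} $x>0$, merely an $\OO$-estimate as $|z|\to\infty$; the fact that $I(x)\to0$ as $x\to0^+$ is irrelevant, since~\eqref{asymPhiprd} asserts the $\oo$-estimate for each fixed $x$, not in a joint limit $x\to0$. As written you only obtain $\limsup_{|z|\to\infty}|z|^\kappa\E^{-|\im(z)|x}\,|\Phi(z,x)-\Phi_\kappa(z,x)|\le C I(x)^2$, which is positive. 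To close the gap within your framework you must propagate the $\oo$-information on $\Phi^1$ through the Volterra structure: once $h(z,y):=|\Phi^1(z,y)|\bigl(\tfrac{y}{1+|z|y}\bigr)^{-\kappa}\E^{-|\im(z)|y}\to0$ pointwise in $y$ with the uniform dominant $h(z,y)\le CI(y)$ from Lemma~\ref{lemPRDPhi}, a Gr\"onwall inequality combined with dominated convergence upgrades this to $|\Phi(z,x)-\Phi_\kappa(z,x)|=\oo\bigl(|z|^{-\kappa}\E^{|\im(z)|x}\bigr)$. Without this propagation step the argument is incomplete.
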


\begin{proof}
We will first establish the claim for the case when $P$ has a bounded continuous derivative near zero. 
More precisely, we suppose that 
\begin{align}\label{eqnPlog}
\begin{cases}
P(x)\in C^1[0,x], & \kappa\neq \frac{1}{2},\\
(1-\log(x))P(x),\ (1-\log(x))P'(x) \in C[0,x], & \kappa=\frac{1}{2}.
\end{cases}
\end{align}
Firstly, let us estimate the $\Phi^1(z,x)$ defined in \eqref{eq:Phin} by decomposing it in 
\[
\Phi^1(z,x)= \Pi(z,x)+ \Upsilon(z,x),
\]
where
\begin{align*}
\Pi(z,x)&=\int_0^x P_{11}(y)K(z,x,y)\begin{pmatrix}
\Phi_{\kappa,1}(z,y)\\
-\Phi_{\kappa,2}(z,y)
\end{pmatrix}dy,\\
\Upsilon(z,x)&=\int_0^x P_{12}(y)K(z,x,y)\begin{pmatrix}
\Phi_{\kappa,2}(z,y)\\
\Phi_{\kappa,1}(z,y)
\end{pmatrix}dy.
\end{align*}

In order to estimate $\Pi(z,x)$, we integrate by parts  to obtain 
\[
\Pi(z,x)=P_{11}(x)R(z,x,x)-\int_0^x P_{11}'(y) R(z,x,y)dy,
\]
 where
 \[
R(z,x,y) =\int_0^y K(z,x,t)\begin{pmatrix}
\Phi_{\kappa,1}(z,t)\\
-\Phi_{\kappa,2}(z,t)
\end{pmatrix}dt.
 \] 
Using the identities
\begin{align}\label{eq:k12}
K_{12}(z,x,y)&=a_\kappa(x)K_{\kappa-1}(z^2,x,y)=a_\kappa^*(y)K_{\kappa}(z^2,x,y),\\
K_{21}(z,x,y)&=a_\kappa^\ast(x) K_{\kappa}(z^2,x,y)=a_\kappa(y)K_{\kappa-1}(z^2,x,y),\label{eq:k21}
\end{align}
as well as the relations provided in Section~\ref{sec4}, we compute  
\begin{align*}
R(z,x,y)=\begin{pmatrix}
-K_\kappa(z^2,x,y)a^*_\kappa\phi_{\kappa}(z^2,y)+\lim_{t\to 0}K_\kappa(z^2,x,t)a^*_\kappa\phi_{\kappa}(z^2,t)\\
-zK_{\kappa-1}(z^2,x,y)\phi_{\kappa}(z^2,y)+\lim_{t\to 0}zK_{\kappa-1}(z^2,x,t)\phi_{\kappa}(z^2,t)
\end{pmatrix}.
\end{align*}
Noting that (cf.\ \cite[Section 2]{kt})
\[
\phi_l(\zeta,x)\sim C_l^{-1} x^{l+1},\quad \theta_l(\zeta,x)\sim \frac{C_l x^{-l}}{2l+1}, \quad C_l=\frac{2^{l+1}\Gamma(l+\frac{3}{2})}{\sqrt{\pi}},\quad l> -\frac{1}{2},
\]
as $x\to 0$, 
we get
\begin{align*}
R(z,x,y)=\begin{pmatrix}
-K_\kappa(z^2,x,y)a^*_\kappa\phi_{\kappa}(z^2,y)\\ 
-zK_{\kappa-1}(z^2,x,y)\phi_{\kappa}(z^2,y)
\end{pmatrix}+\begin{pmatrix}
\phi_{\kappa}(z^2,x)\\
0
\end{pmatrix}
\end{align*} 
and, in particular, 
\[
R(z,x,x)=\begin{pmatrix}
\phi_{\kappa}(z^2,x)\\
0
\end{pmatrix}.
\]
From all this we immediately infer  
\begin{align*}
 |R(z,x,y)| & \le C \left(\frac{x}{1+|z|x}\right)^{\kappa+1} \E^{|\im(z)|x}\begin{cases}
1, & \kappa\neq\frac{1}{2},\\
1-\log(y), & \kappa=\frac{1}{2},
\end{cases}
\end{align*}
for all $0<y\le x\le 1$. 
Hence we arrive at the following estimate for $x$ near zero 
\begin{align*}
|\Pi(z,x)|\le C B_P \left(\frac{x}{1+|z|x}\right)^{\kappa+1}\E^{|\im(z)|x},
\end{align*}
where the constant $B_P$ depends on the potential as follows 
\begin{align*}
 B_P = \begin{cases} \|P(y)\|_{C^1[0,x]}, & \kappa\not=\frac{1}{2}, \\ \|(1-\log(y))P(y)\|_{C[0,x]} + \|(1-\log(y)) P'(y)\|_{C[0,x]}, & \kappa = \frac{1}{2}. \end{cases}
\end{align*}

Similarly, in order to estimate $\Upsilon(z,x)$, consider the function
\[
S(z,x,y)=\int_0^y K(z,x,t)\begin{pmatrix}
\Phi_{\kappa,2}(z,t)\\
\Phi_{\kappa,1}(z,t)
\end{pmatrix}dt.
\]
Hereby note that the integrand may be written as  
\begin{align*}
& \begin{pmatrix}
z K_{\kappa}(z^2,x,t)a^*_\kappa\phi_{\kappa}(z^2,t)  + z a_\kappa(x)K_{\kappa-1}(z^2,x,t)\phi_{\kappa}(z^2,t)\\
a_{\kappa}^*(x) K_{\kappa}(z^2,x,t)a^*_\kappa\phi_{\kappa}(z^2,t)  + z^2 K_{\kappa-1}(z^2,x,t)\phi_{\kappa}(z^2,t)
\end{pmatrix}\\
& \qquad\qquad =\begin{pmatrix}
z (K_{\kappa}(z^2,x,t)a^*_\kappa\phi_{\kappa}(z^2,t)  +  a_\kappa^*(t)K_{\kappa}(z^2,x,t)\phi_{\kappa}(z^2,t))\\
a_{\kappa}(t) K_{\kappa-1}(z^2,x,t)a^*_\kappa\phi_{\kappa}(z^2,t)  +  K_{\kappa-1}(z^2,x,t)a_\kappa a_\kappa^*\phi_{\kappa}(z^2,t)
\end{pmatrix}.
\end{align*}
Using the identities \eqref{eq:k12} and  \eqref{eq:k21} once more, 
we compute
\begin{align*}
S(z,x,y) & =\begin{pmatrix}
zK_{\kappa}(z^2,x,y)\phi_{\kappa}(z^2,y)\\
-K_{\kappa-1}(z^2,x,y) a^*_\kappa\phi_{\kappa}(z^2,y)
\end{pmatrix}  \\
 & \qquad\qquad + \int_0^y \frac{2\kappa}{t}\begin{pmatrix}
zK_{\kappa}(z^2,x,t)\phi_{\kappa}(z^2,t)\\
K_{\kappa-1}(z^2,x,t) a^*_\kappa\phi_{\kappa}(z^2,t)
\end{pmatrix}dt
\end{align*}
and, in particular,  
\[
S(z,x,x) =
\int_0^x \frac{2\kappa}{t}\begin{pmatrix}
zK_{\kappa}(z^2,x,t)\phi_{\kappa}(z^2,t)\\
K_{\kappa-1}(z^2,x,t)a^*_\kappa\phi_{\kappa}(z^2,t)
\end{pmatrix}dt.
\]
Now using the estimates from Lemma \ref{lem:estbes}, we obtain
\begin{align*}
|S(z,x,y)|&\le C \frac{T(z,x)}{|z|} \left(\frac{x}{1+ |z| x}\right)^{\kappa} \E^{|\im(z)| x}  \begin{cases} 1, & \kappa\not=\frac{1}{2}, \\ 1-\log(y), & \kappa=\frac{1}{2},\end{cases} 
\end{align*}
where the function $T(z,x)$ is given by 
\begin{align*}
 T(z,x) = \begin{cases} \log(1+|z|x), & \kappa\not=\frac{1}{2}, \\ \log(1+|z|x) + |\mathrm{Li}_2(-|z|x)|, & \kappa=\frac{1}{2}. \end{cases}
\end{align*}
Hereby note that $\mathrm{Li}_2(-|z|x)=\oo(|z|)$ as $|z|\to \infty$ (see \cite[formula (25.12.2)]{dlmf}). Using these estimates and integrating by parts we arrive at the following estimate  
\begin{align*}
|\Upsilon(z,x)|\le C B_P \frac{T(z,x)}{|z|} \left(\frac{x}{1+ |z| x}\right)^{\kappa} \E^{|\im(z)| x}. 
\end{align*} 
The latter immediately implies the corresponding estimate for $\Phi^1(z,x)$. 

Noting that the functions in~\eqref{eqnPlog} are continuous, we infer from \eqref{eq:Phin}  
\begin{align}\label{eq:Phinest}
|\Phi^{n}(z,x)| & \le \frac{C^{n} B_P^{n}}{(n-1)!} \frac{T(z,x)}{|z|} \left(\frac{x}{1+|z|x}\right)^{\kappa}\E^{|\im(z)|x} 
\end{align}
for all $n\geq 2$. 
Summing up these inequalities, we arrive at the estimate 
\begin{align*}
|\Phi(z,x)-\Phi_\kappa(z,x)| & \le C \E^{2 B_P} \frac{T(z,x)}{|z|} \left(\frac{x}{1+|z|x}\right)^{\kappa}\E^{|\im(z)|x} 
\end{align*}
for some constant $C$, yielding the claim in this case. 

 Now we return to the general case and fix some $x\in(0,b)$ near zero. 
 Then for every $\varepsilon>0$ there is a $P_\varepsilon$ with a bounded continuous first derivative on $(0,x)$ such that $\| (P(y)-P_\varepsilon(y))(1-\log(y))\|_{L^1(0,x)}\le \varepsilon$.
 Hence, we infer from \eqref{eq:Phin} that 
\[
|\Phi^1(z,x;P)-\Phi^1(z,x;P_\varepsilon)|\le \varepsilon \, C\left(\frac{x}{1+|z|x}\right)^{\kappa}\E^{|\im(z)|x},
\]
as $|z|\to \infty$. Employing \eqref{eq:Phinest}, we arrive at the following estimate
\[
|\Phi^1(z,x;P)|\le C\left(\frac{x}{1+|z|x}\right)^{\kappa}\left(\varepsilon +B_{P_\varepsilon} \frac{T(z,x)}{|z|}\right)\E^{|\im(z)|x}.
\]
Using this estimate, after iteration we obtain
\[
|\Phi^n(z,x;P)|\le \frac{C^n B_P^{n-1}}{(n-1)!}\left(\frac{x}{1+|z|x}\right)^{\kappa}\left(\varepsilon + B_{P_\varepsilon} \frac{T(z,x)}{|z|}\right)\E^{|\im(z)|x},
\] 
which implies \eqref{asymPhiprd} since $\varepsilon>0$ can be chosen arbitrarily small.

Finally, utilizing the asymptotics (cf.\ \cite[Sections~VII.21 and VII.22]{wat})
\begin{align*}
\phi_\kappa(z^2,x)  = 
(\pm z)^{-\kappa-1}\sin\bigl(\pm z x- \frac{\kappa \pi}{2}\bigr) + \OO\bigl(|z|^{-\kappa-2}\E^{|\im(z)| x}\bigr),
\end{align*}
as $\lam\to\pm\infty$ as $|z|\to\infty$ in any sector $|\arg(\pm z)| < \pi -\delta$, we obtain
\begin{align*}
\Phi_\kappa(z,x)= 
(\pm z)^{-\kappa}\begin{pmatrix}\sin\bigl(z x \mp \frac{\kappa \pi}{2}\bigr)\\ \cos\bigl(z x \mp \frac{\kappa \pi}{2}\bigr)\end{pmatrix}
 + \OO\bigl(|z|^{-\kappa-1}\E^{|\im(z)| x}\bigr),
\end{align*}
from which the very last claim follows.
\end{proof}

Now we are ready to prove our main results in this section.

\begin{theorem}\label{thmpbgn}
Suppose that the potential $Q(x)$ is of the form~\eqref{hyp:pertrad} and let $\Phi(z,x)$ be as in Lemma~\ref{lemPRDPhi}.
Then there is a second solution $\Theta(z,x)$ with $W(\Theta(z),\Phi(z))=1$ such that the corresponding singular Weyl function is given by 
\be
M(z) = (1+z^2)^{\ceil{\kappa}} \int_\R \left(\frac{1}{\lam-z} - \frac{\lam}{1+\lam^2}\right) \frac{d\rho(\lam)}{(1+\lam^2)^{\ceil{\kappa}}}, \quad z\in\C\backslash\R. 
\ee
In particular, $M(z)\in N_{\kappa_0}^\infty$ for some $\kappa_0 \le \ceil{\kappa}$.
\end{theorem}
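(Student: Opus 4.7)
The plan is to first produce a real entire companion solution $\Theta_0(z,x)$ of growth order at most $1$ with $W(\Theta_0,\Phi)=1$, then invoke the integral representation of Theorem~\ref{IntR} with $\hat{g}(z)=(1+z^2)^{\ceil{\kappa}}$, and finally absorb the resulting real entire term by modifying $\Theta_0$ via Remark~\ref{rem:uniq}.

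By Lemma~\ref{lemPRDPhi}, the solution $\Phi(\,\cdot\,,x)$ belongs to $R_1(\C)$ for every $x\in(0,b)$. To apply Lemma~\ref{lem:cor}, I must produce one $y$ for which $|\Phi_1(z,y)|+|\Phi_2(z,y)|\geq b\E^{-a|z|}$ uniformly in $z\in\C$. Fix $y>0$ small. Lemma~\ref{lem:asphi} gives $\Phi(z,y)\sim (\pm z)^{-\kappa}\bigl(\sin(zy\mp \kappa\pi/2),\cos(zy\mp \kappa\pi/2)\bigr)$ as $|z|\to\infty$ in the sectors $|\arg(\pm z)|<\pi-\delta$; since $|\sin u|^{2}+|\cos u|^{2}=\cosh(2\im u)\geq 1$, one obtains $|\Phi_1(z,y)|^{2}+|\Phi_2(z,y)|^{2}\geq C|z|^{-2\kappa}$ for all large $|z|$. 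For $z$ in a bounded region the sum is uniformly positive by continuity, because the vector $\Phi(z,y)\in\C^{2}$ never vanishes (any joint zero at $y$ would, by uniqueness of the initial value problem, contradict the nontriviality of $\Phi$ guaranteed by the iteration in Lemma~\ref{lemPRDPhi}). Combining the two regimes, the required lower bound holds, and Lemma~\ref{lem:cor} supplies a real entire $\Theta_0(\,\cdot\,,x)\in R_1(\C)$ with $W(\Theta_0,\Phi)=1$. Let $M_0(z)$ denote the singular Weyl function associated with $(\Theta_0,\Phi)$.

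Next I verify $(1+\lam^2)^{-\ceil{\kappa}-1}\in L^1(\R,d\rho)$, which is what allows Theorem~\ref{IntR} to be applied with $\hat{g}(z)=(1+z^{2})^{\ceil{\kappa}}$. Lemma~\ref{lemUub} gives $\Phi_i(\,\cdot\,,x)\in L^{2}(\R,(1+\lam^{2})^{-1}d\rho)$ for $i=1,2$ and every fixed $x$. Choosing a small $x_0>0$, the real-axis asymptotics in Lemma~\ref{lem:asphi} yield $|\Phi_1(\lam,x_0)|^{2}+|\Phi_2(\lam,x_0)|^{2}\sim |\lam|^{-2\kappa}$ as $|\lam|\to\infty$, whence
\begin{equation*}
\int_\R \frac{|\lam|^{-2\kappa}}{1+\lam^{2}}\,d\rho(\lam)\leq C\int_\R \frac{|\Phi_1(\lam,x_0)|^{2}+|\Phi_2(\lam,x_0)|^{2}}{1+\lam^{2}}\,d\rho(\lam)<\infty.
\end{equation*}
Since $2\ceil{\kappa}+2\geq 2\kappa+2$, this implies $(1+\lam^{2})^{-\ceil{\kappa}-1}=O(|\lam|^{-2\kappa-2}(1+\lam^{2})^{-1}\cdot(1+\lam^{2})\cdot |\lam|^{2\kappa-2\ceil{\kappa}})$ is $\rho$-integrable as well. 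Theorem~\ref{IntR} now provides a real entire $E(z)$ with
\begin{equation*}
M_0(z)=E(z)+(1+z^{2})^{\ceil{\kappa}}\int_\R\left(\frac{1}{\lam-z}-\frac{\lam}{1+\lam^{2}}\right)\frac{d\rho(\lam)}{(1+\lam^{2})^{\ceil{\kappa}}}.
\end{equation*}

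Setting $\Theta(z,x):=\Theta_0(z,x)+E(z)\Phi(z,x)$ (the case $g\equiv 0$, $f=-E$ of Remark~\ref{rem:uniq}) produces a real entire second solution with $W(\Theta,\Phi)=1$ whose associated singular Weyl function is $M(z)=M_0(z)-E(z)$, giving exactly the integral representation claimed. The final assertion $M(z)\in N_{\kappa_0}^\infty$ with $\kappa_0\leq \ceil{\kappa}$ follows from Theorem~\ref{thm:nkap} applied with $k=\ceil{\kappa}$, using the integrability established above. The main obstacle is this integrability step: it is the place where the precise real-axis asymptotics of Lemma~\ref{lem:asphi} (and not merely the sectorial estimate) is indispensable, since one needs a genuine lower bound on $|\Phi_1(\lam,x_0)|^{2}+|\Phi_2(\lam,x_0)|^{2}$ for $\lam\in\R$ to convert the $L^{2}$-bound from Lemma~\ref{lemUub} into an integrability statement for a pure power of $(1+\lam^{2})$.
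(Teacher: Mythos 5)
Your proof is correct and follows essentially the same route as the paper: the key step in both is to combine the real-axis asymptotics $|\Phi(\lambda,x)|^2\sim|\lambda|^{-2\kappa}$ from Lemma~\ref{lem:asphi} with the $L^1(\R,(1+\lambda^2)^{-1}d\rho)$ bound from Lemma~\ref{lemUub} to obtain $(1+\lambda^2)^{-\ceil{\kappa}-1}\in L^1(\R,d\rho)$, after which the paper simply cites Theorem~\ref{thm:nkap} while you unpack its proof (Theorem~\ref{IntR} with $\hat g(z)=(1+z^2)^{\ceil{\kappa}}$ followed by absorbing the real entire term $E$ into $\Theta$ via Remark~\ref{rem:uniq}). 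One small redundancy: your opening paragraph producing a $\Theta_0$ of growth order at most $1$ via Lemma~\ref{lem:cor} is not needed here, since Theorem~\ref{IntR} requires no control on the growth of $\Theta_0$ and Lemma~\ref{lem:pt}(iii) already guarantees the existence of some fundamental system; the growth-order construction only becomes relevant for the Borg--Marchenko arguments of Section~\ref{sec:lbmt}.
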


\begin{proof}
By Lemma~\ref{lem:asphi} we have $|\Phi(\lam,x)|^2  \sim |\lam|^{-2\kappa}$ as $\lambda\rightarrow \pm\infty$. 
Since the function $|\Phi(\lambda,x)|^2$ belongs to $L^1(\R, (1+\lam^2)^{-1}d\rho)$ by Lemma~\ref{lemUub}, so does $(1+\lambda^2)^{-\ceil{\kappa}}$
and the claim follows from Theorem~\ref{thm:nkap}.
\end{proof}

Using \eqref{asymPhiprd}, we can even strengthen Lemma~\ref{lem:cor} in
this special case.

\begin{lemma}\label{lem:bes}
Suppose that the potential $Q(x)$ is of the form~\eqref{hyp:pertrad} and let $\Phi(z,x)$ be as in Lemma~\ref{lemPRDPhi}. 
Then there is a second solution $\Theta(z,x)$ with
$W(\Theta(z),\Phi(z))=1$ and such that $\Theta(.,x)$ is of finite exponential type for every $x\in(0,b)$.
\end{lemma}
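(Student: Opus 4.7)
The plan is to verify the two hypotheses of Lemma~\ref{lem:cor} with $s=1$: membership of $\Phi(\,\cdot\,,x)$ in $R_1(\C)$ and the pointwise lower bound $|\Phi_1(z,y)|+|\Phi_2(z,y)| \ge b\E^{-a|z|}$. Since $R_1(\C)$ coincides with the class of entire functions of finite exponential type, the second solution $\Theta$ produced by that lemma will have exactly the property asserted.

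For the upper bound, Lemma~\ref{lemPRDPhi} combined with the estimate $|\Phi_\kappa(z,x)|\le C\E^{|\im(z)|x}$ (immediate from Lemma~\ref{lem:estbes}) yields $|\Phi(z,x)|\le C\E^{|\im(z)|x}$ for $x$ near zero, so $\Phi(\,\cdot\,,x)\in R_1(\C)$. Propagating forward in $x$ via the first-order linear system $\I\sig_2\Phi' = (z-Q(x))\Phi$ and a Gronwall argument keeps $\Phi(\,\cdot\,,x)$ in $R_1(\C)$ for every $x\in(0,b)$.

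For the lower bound, I would fix a small $y\in(0,b)$ and, after the preliminary gauge transformation~\eqref{eqnGaugeEL} normalizing the trace of $Q$ to zero (which only rotates $\Phi$ pointwise and thus leaves the bounds we need unchanged up to constants), apply the high-energy asymptotics of Lemma~\ref{lem:asphi}. The identity $|\sin(zy\mp\kappa\pi/2)|^2+|\cos(zy\mp\kappa\pi/2)|^2 = \cosh(2\im(z)y)\ge 1$ yields $|\Phi_\kappa(z,y)| \ge c|z|^{-\kappa}\sqrt{\cosh(2\im(z)y)}$, which dominates the error term $o(|z|^{-\kappa}\E^{|\im(z)|y})$ from~\eqref{asymPhiprd} for $|z|$ large. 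Hence $|\Phi(z,y)|\ge c|z|^{-\kappa}$ for $|z|\ge R$, uniformly in the angle (by joining finitely many overlapping sectors $|\arg(\pm z)|<\pi-\delta$). On the compact disk $|z|\le R$, the functions $\Phi_1(\,\cdot\,,y)$ and $\Phi_2(\,\cdot\,,y)$ have no common zero, for a common zero at $z_0$ would force the nontrivial solution $\Phi(z_0,\,\cdot\,)$ to vanish at $y$ and hence identically. By continuity, $|\Phi_1|+|\Phi_2|$ is bounded below by a positive constant there. Combining the two regimes gives $|\Phi_1(z,y)|+|\Phi_2(z,y)|\ge|\Phi(z,y)|/\sqrt{2}\ge b\E^{-a|z|}$ for suitable $a,b>0$, since $|z|^{-\kappa}$ is easily larger than any $\E^{-a|z|}$ with $a>0$ for $|z|$ large.

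The main technical point is controlling the error from Lemma~\ref{lem:asphi} uniformly in the angular variable. In the regime of bounded $|\im(z)|$ but large $|\re(z)|$, $\sqrt{\cosh(2\im(z)y)}\sim 1$ and the polynomial lower bound $|z|^{-\kappa}$ beats the $o(|z|^{-\kappa})$ error; in the regime of large $|\im(z)|$, $\sqrt{\cosh(2\im(z)y)}\sim\E^{|\im(z)|y}/\sqrt{2}$ and the leading term again dominates the error by a full factor. With both hypotheses of Lemma~\ref{lem:cor} verified, that lemma delivers the second solution $\Theta(z,x)$ of finite exponential type with $W(\Theta(z),\Phi(z))=1$.
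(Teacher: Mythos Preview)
Your argument is correct and reaches the conclusion via Lemma~\ref{lem:cor} just as the paper does, but your route to the lower bound \eqref{eqnSSphiboundlow} differs from the paper's. The paper does not estimate $|\Phi_1|^2+|\Phi_2|^2$ directly; instead it forms the single complex combination $\Phi_2(z,x)-\I\Phi_1(z,x)$, observes that in the upper half plane the leading asymptotic becomes $z^{-\kappa}\E^{-\I(zx-\kappa\pi/2)}$ (so its modulus is exactly $|z|^{-\kappa}\E^{\im(z)x}$ with no cancellation), and then uses the Hermite--Biehler type fact that $\Phi_2-\I\Phi_1$ has no zeros in the closed upper half plane (a consequence of the Herglotz property of $m_-$) to extend the lower bound down to bounded $z$. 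Conjugation then covers $\im(z)\le 0$. Your approach instead exploits the exact identity $|\sin w|^2+|\cos w|^2=\cosh(2\im w)$ on the asymptotic vector, handles the compact disk by the elementary observation that $\Phi_1(\,\cdot\,,y)$ and $\Phi_2(\,\cdot\,,y)$ have no common zeros, and patches sectors to get uniformity. This is slightly longer but more self-contained: you never need the no-zeros-in-the-half-plane property, which the paper states without justification. Both arguments tacitly require the trace-zero normalisation to invoke Lemma~\ref{lem:asphi}; your explicit remark about the gauge transformation \eqref{eqnGaugeEL} is a reasonable way to address this, and since a pointwise rotation preserves $|\Phi_1|+|\Phi_2|$ up to a factor $\sqrt{2}$, the transferred bound suffices for Lemma~\ref{lem:cor}.
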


\begin{proof}
The estimate in Lemma~\ref{lem:asphi} imply the asymptotics 
\[
|\Phi_2(z,x) - \I \Phi_1(z,x)| = |z|^{-\kappa} \E^{\im(z) x}
+ \oo\bigl(|z|^{-\kappa}\E^{\im(z|x}\bigr)
\]
as $|z|\to \infty$ in $\im(z)\ge 0$.  Since $\Phi_2(z,x) - \I\Phi_2(z,x)$ has no zeros in the closed upper half plane, this shows
\[
|\Phi_2(z,x)| + |\Phi_1(z,x)| \ge |\Phi_2(z,x) -\I \Phi_1(z,x)| \geq  \frac{c}{1+|z|^\kappa} ,\quad \im(z)\ge 0,
\]
for some $c >0$. Conjugating $z$ shows that the same estimate holds for $\im(z)\le 0$ and thus the claim is a consequence of Lemma~\ref{lem:cor}. 
Hereby, also note that $\Theta(.,x)$ is of finite exponential type for every $x\in(0,b)$ if and only if it is for some $x\in(0,b)$. 
\end{proof}

In particular, the results of the previous sections apply to this example. For instance, Theorem~\ref{thmbm} now takes the following form.

\begin{theorem}\label{thmBMPRD}
Let $Q(x)$ and $\tilde{Q}(x)$ be two potentials of the form~\eqref{hyp:pertrad} with the same $\kappa\geq 0$ and their trace normalized to zero. 
Choose the solutions $\Phi(z,x)$, $\tilde{\Phi}(z,x)$ as in Lemma~\ref{lemPRDPhi}, $\Theta(z,x)$, $\tilde{\Theta}(z,x)$ according to Lemma~\ref{lem:bes}  and let $c\in(0,b)\cap(0,\tilde{b})$. 
If for every $\eps>0$ there is an entire function $f(z)$ of finite exponential type such that 
\begin{align}
 \tilde{M}(z)-M(z) = f(z) + \OO\big(\E^{-2(c-\eps) |\im(z)|}\big)
\end{align}
as $z\to\infty$ along the imaginary axis, then $Q(x)=\tilde{Q}(x)$ for almost all $x\in(0,c)$.
\end{theorem}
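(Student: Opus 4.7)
The plan is to reduce Theorem~\ref{thmBMPRD} to Theorem~\ref{thmbm} applied with $c$ replaced by $c-\eps$, and then let $\eps\downarrow 0$. First I would verify the structural hypotheses of Theorem~\ref{thmbm} with $s=1$: the estimate in Lemma~\ref{lemPRDPhi} shows that $\Phi(\cdot,x)$ and $\tilde\Phi(\cdot,x)$ are of finite exponential type, and Lemma~\ref{lem:bes} supplies matching second solutions $\Theta(\cdot,x)$, $\tilde\Theta(\cdot,x)$ of finite exponential type as well. Moreover, Lemma~\ref{lem:asphi} provides the common leading asymptotics
\begin{equation*}
\Phi(z,x),\ \tilde\Phi(z,x) \sim (\pm z)^{-\kappa}\begin{pmatrix}\sin\bigl(zx \mp \frac{\kappa\pi}{2}\bigr)\\ \cos\bigl(zx \mp \frac{\kappa\pi}{2}\bigr)\end{pmatrix}
\end{equation*}
in any sector $|\arg(\pm z)|<\pi-\delta$, so in particular $\tilde\Phi_1(z,x)\sim\Phi_1(z,x)$ along any family of nonreal rays dissecting the plane into sectors of opening strictly less than $\pi$.

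Next I would translate the hypothesis on $\tilde M-M$ into the asymptotic required by Theorem~\ref{thmbm}. Fix $\eps>0$, set $c'=c-\eps$, and apply the hypothesis with $\eps/2$ in place of $\eps$ to obtain an entire function $f$ of finite exponential type with
\begin{equation*}
\tilde M(z)-M(z)-f(z)=\OO\bigl(\E^{-2(c-\eps/2)|\im z|}\bigr)
\end{equation*}
as $z\to\infty$ along the imaginary axis. From Lemma~\ref{lem:asphi} I would derive $|\Phi_1(iy,c')|^2 \sim \frac{1}{4}|y|^{-2\kappa}\E^{2c'|y|}$, whence $1/|\Phi_1(iy,c')|^2 \sim 4|y|^{2\kappa}\E^{-2c'|y|}$ as $|y|\to\infty$. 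Since $\kappa\geq 0$, the additional factor $\E^{-\eps|\im z|}$ dominates the polynomial $|y|^{2\kappa}$ for large $|y|$, so we obtain $\tilde M(z)-M(z)=f(z)+\OO(1/\Phi_1(z,c')^{2})$ along the imaginary axis.

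Theorem~\ref{thmbm} with $c$ replaced by $c'$ then yields $Q(x)=\tilde Q(x)$ for almost all $x\in(0,c-\eps)$, and letting $\eps\downarrow 0$ completes the proof. The main technical obstacle is that condition (ii) of Theorem~\ref{thmbm} formally asks for the asymptotics of $\tilde M-M$ in a sector $|\im z|\geq\delta|\re z|$, whereas our hypothesis only gives decay along the imaginary axis. To close this gap I would invoke the refinement noted in the discussion following Theorem~\ref{thmbm}, where the asymptotics of the Weyl-function difference are only required along the chosen nonreal rays; alternatively one runs the Phragm\'en--Lindel\"of step directly for the entire function $G(z)=\tilde\Phi_j(z,x)\Theta_j(z,x)-\Phi_j(z,x)\tilde\Theta_j(z,x)-f(z)\Phi_j(z,x)\tilde\Phi_j(z,x)$ at a fixed base point $x<c'$, exploiting that its exponential type is at most $2x<2c'$ so that its exponential decay along the imaginary axis forces $G\equiv 0$ via a half-plane Phragm\'en--Lindel\"of argument.
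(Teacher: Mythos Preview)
Your overall strategy---reduce to the argument behind Theorem~\ref{thmbm} with $c$ replaced by $c-\eps$ and then let $\eps\downarrow 0$---is precisely what the paper does; it simply remarks that ``a simple modification of the proof of Theorem~\ref{thmbm} yields the claim,'' using that the solutions have finite exponential type. Your verification of the structural hypotheses (exponential type of $\Phi,\tilde\Phi,\Theta,\tilde\Theta$ and the shared asymptotics from Lemma~\ref{lem:asphi}) is correct, and you rightly identify the one genuine obstacle: the hypothesis on $\tilde M-M$ is only along the imaginary axis, whereas Theorem~\ref{thmbm} (even with the relaxation noted afterwards) needs rays giving sectors of opening \emph{strictly} less than $\pi$.

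However, your proposed resolution (b) has a gap. The claim that $G$ has exponential type at most $2x$ is not justified: Lemma~\ref{lem:bes} only guarantees that $\Theta(\cdot,x)$, $\tilde\Theta(\cdot,x)$ are of \emph{some} finite exponential type, not type $x$, and $f$ may have arbitrary finite type, so $f\Phi_j\tilde\Phi_j$ can have type strictly larger than $2x$. Moreover, $G$ does not decay exponentially on the imaginary axis: the term $\tilde\Phi_j\Psi_j-\Phi_j\tilde\Psi_j$ is only $o(1)$ there (the rate in Lemma~\ref{lem:asphi} and Lemma~\ref{lemAsymM} is merely $o(1)$), so $G(iy)=o(1)$ without an explicit exponential rate. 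Finally, even granting finite type and $G(iy)\to 0$, a half-plane Phragm\'en--Lindel\"of argument does not force $G\equiv 0$: the entire function $(\cosh(\tau z)-1)/z^2$ has type $\tau$, tends to zero along the imaginary axis, and is not identically zero. Resolution (a) has the same defect you implicitly anticipated: the refinement after Theorem~\ref{thmbm} still requires the $\tilde M-M$ asymptotics along \emph{all} the chosen rays, and two rays at $\pm\pi/2$ produce sectors of opening exactly $\pi$. The ``simple modification'' the paper has in mind really does require a sharper Phragm\'en--Lindel\"of step tailored to the radial case, and your sketch does not yet supply it.
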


 To be precise, the claim of Theorem~\ref{thmBMPRD} is somewhat stronger than the one in Theorem~\ref{thmbm} and does not immediately follow from there. 
 However, due to the fact that the exponential types of our solutions $\Phi(z,x)$ and $\Theta(z,x)$ are known to be finite, a simple modification of the proof of Theorem~\ref{thmbm} yields the claim. 

For further results concerning the (inverse) spectral theory of radial Dirac operators see \cite{ahm,ahm2,ahm3,lema,ma,ma2,mp,pu,ser,st,teosc}

\section{Uniqueness results for operators with discrete spectra}
\label{sec:urds}

Now we are finally able to investigate when the spectral measure determines the potential for operators with purely discrete spectrum.
In this respect, observe that the uniqueness results for the singular Weyl function from the previous sections do not  immediately yield
such results. In fact, if $\rho= \tilde{\rho}$, then the difference of the corresponding singular Weyl functions is an entire function by Theorem~\ref{IntR}.
However, in order to apply Corollary~\ref{corbm} we would need some bound on the growth order of this function.
Fortunately, in the case of purely discrete spectrum with finite convergence exponent, refinements of the arguments in the proof
of Theorem~\ref{thmbm} show that the growth condition is superfluous.
We continue to assume that our Dirac operators are in standard form, that is, normalized such that
\be
q_{\rm el} \equiv 0.
\ee

\begin{corollary}[\cite{et}]\label{corbmdis}
Suppose $\Phi(z,x)$, $\tilde{\Phi}(z,x)$ are of growth order at most $s$ for some $s\geq 1$ and  $\tilde{\Phi}(z,x) \sim \Phi(z,x)$ for an $x\in(a,b)\cap(a,\tilde{b})$ as $|z|\to\infty$ along some nonreal rays dissecting the complex plane into sectors of opening angles less than $\nicefrac{\pi}{s}$.
Furthermore, assume that $H$ and $\tilde{H}$ have purely discrete spectra with convergence exponent at most $s$.
If
\begin{equation}
 \tilde{M}(z) - M(z) = f(z), \quad z\in\C\backslash\R,
\end{equation}
for some entire function $f(z)$, then $H=\tilde{H}$.
\end{corollary}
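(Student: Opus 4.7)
The corollary is designed so that, once an \emph{a priori} growth estimate $f\in R_s(\C)$ (or growth order at most $s$) is available, it becomes a direct consequence of Corollary~\ref{corbm}. The plan is therefore to upgrade the hypothesis ``$f$ entire'' to ``$f$ entire of growth order at most $s$'' using the discrete spectrum assumption, and then to quote Corollary~\ref{corbm}.

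First, I would argue that under the discrete spectrum hypothesis the singular Weyl function $M(z)$ is \emph{meromorphic} on all of $\C$ with poles exactly on $\sigma(H)$, and that its meromorphic order is at most $s$. The meromorphicity is immediate from the representation $M(z) = -W(\Theta(z),u_+(z))/W(\Phi(z),u_+(z))$ since, when $H^D_{(c,b)}$ has purely discrete spectrum, the Weyl $m$-function $m_+(z)$ and hence $u_+(z,x)$ are meromorphic. For the order bound I would use Theorem~\ref{IntR}: because $\sigma(H)$ is discrete with convergence exponent at most $s$, we can pick the integrability factor $\hat g(z)$ to be a polynomial (for instance $(1+z^2)^N$ for $N$ large enough to ensure $(1+\lambda^2)^{-1}\hat g(\lambda)^{-1}\in L^1(\R,d\rho)$), so that
\begin{equation*}
M(z) = E(z) + \hat g(z) \sum_{n} c_n \Bigl(\frac{1}{\lambda_n - z} - \frac{\lambda_n}{1+\lambda_n^2}\Bigr), \qquad \{\lambda_n\} = \sigma(H).
\end{equation*}
The sum, being a Cauchy-type transform of a discrete measure supported on a set of convergence exponent at most $s$, is a meromorphic function of order at most $s$ by Borel's theorem applied to the associated canonical product. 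The same reasoning applies to $\tilde M(z)$.

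Next, I would pass from meromorphic order bounds on $M$ and $\tilde M$ to an entire order bound on $f=\tilde M-M$. Since $f$ is entire by hypothesis and equals the difference of two meromorphic functions of order at most $s$ (with cancelling principal parts), it is itself of order at most $s$, provided the entire parts $E$ and $\tilde E$ appearing in the two integral representations are of order at most $s$. By Remark~\ref{rem:uniqExp} the solutions $\Theta(z,x)$ and $\tilde\Theta(z,x)$ are each determined only up to a real entire multiple of $\Phi$, respectively $\tilde\Phi$, of growth order at most $s$; using Lemma~\ref{lem:cor} together with Theorem~\ref{thm:IOphiev} this freedom can be exploited to pick representatives with $E,\tilde E$ of order at most $s$. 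Absorbing the corresponding changes into $f$ only adds an entire function of order at most $s$ to it, which is harmless for the final application.

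With $f$ now established to be entire of order at most $s$, the hypotheses of Corollary~\ref{corbm} are verified (the asymptotic equivalence $\tilde\Phi_1\sim\Phi_1$ along the dissecting rays is assumed, and the remaining assumption on $\tilde M-M$ is met since $f$ itself is entire), and the conclusion $H=\tilde H$ follows. The main obstacle in this plan is the second paragraph: carefully justifying that, after adjusting $\Theta$ within its permitted equivalence class, the entire parts $E$ and $\tilde E$ can indeed be chosen of growth order at most $s$; this requires combining the bounds in Lemma~\ref{lem:cor} with the Hadamard factorization of the relevant Wronskians and controlling how the adjustment interacts with the normalizing asymptotics $\tilde\Phi_1\sim\Phi_1$.
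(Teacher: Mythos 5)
Your plan is a genuinely different route from the paper's, but it has a real gap, and it is worth understanding why the paper's route exists at all. You propose to upgrade $f$ to growth order at most $s$ and then quote Corollary~\ref{corbm}. The problem is that Corollary~\ref{corbm} also requires $\Theta(z,x)$ and $\tilde\Theta(z,x)$ to be of growth order at most $s$, and Corollary~\ref{corbmdis} deliberately drops that assumption: such a $\Theta$ need not exist, and the paper spends Section~\ref{sec:egr} discussing exactly when one can be found (Lemma~\ref{lem:cor} requires $\Phi\in R_s(\C)$ \emph{and} the lower bound~\eqref{eqnSSphiboundlow}, neither of which is among your hypotheses). You also misread Remark~\ref{rem:uniqExp}: it says a $\Theta$ that is already of growth order at most $s$ is unique modulo $f\Phi$ with $f$ of order at most $s$; it does not say an arbitrary $\Theta$ can be brought into that class by subtracting such an $f\Phi$. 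So both the bound on $E,\tilde E$ (which you flag as "the main obstacle") and the hidden requirement on $\Theta,\tilde\Theta$ are unresolved, and the second one cannot be resolved in general.

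The point of the discrete-spectrum refinement in \cite{et} is to bypass all of this by never needing growth control on $\Theta$, $\tilde\Theta$, or $f$. One works directly with $G(z)=\tilde\Phi_j(z,x)\Psi_j(z,x)-\Phi_j(z,x)\tilde\Psi_j(z,x)$, which coincides with the entire function $\tilde\Phi_j\Theta_j-\Phi_j\tilde\Theta_j-f\Phi_j\tilde\Phi_j$ since $\tilde M-M=f$ exactly. The discrete-spectrum hypothesis supplies, via (the mirror version of) Theorem~\ref{thm:IOphiev}, entire solutions $\Pi,\tilde\Pi$ of growth order at most $s$ lying in the domain near $b,\tilde b$, and then $\Psi=\Pi/W(\Pi,\Phi)$, $\tilde\Psi=\tilde\Pi/W(\tilde\Pi,\tilde\Phi)$, with both Wronskians entire of order at most $s$. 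Hence $G\cdot W(\Pi,\Phi)\cdot W(\tilde\Pi,\tilde\Phi)$ is entire of order at most $s$, and since $G$ is entire, the Hadamard divisibility argument shows $G$ itself has order at most $s$. It vanishes along the dissecting rays by~\eqref{asympsi} and $\tilde\Phi_1\sim\Phi_1$, so Phragm\'en--Lindel\"of and Liouville give $G\equiv 0$, and the rest of the proof of Theorem~\ref{thmbm} goes through. This is what the phrase "refinements of the arguments in the proof of Theorem~\ref{thmbm}" refers to; your approach would work only in the restricted situation where a $\Theta$ of order at most $s$ actually exists, which defeats the purpose of the corollary.
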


Now the lack of a growth restriction in Corollary~\ref{corbmdis} implies that it immediately translates into a corresponding uniqueness result for the spectral measure.

\begin{theorem}[\cite{et}]\label{thmSpectFuncDisc}
Suppose that $\Phi(z,x)$, $\tilde{\Phi}(z,x)$ are of growth order at most $s$ for some
$s\geq 1$ and $\tilde{\Phi}(z,x)\sim\Phi(z,x)$ for an $x\in(a,b)\cap(a,\tilde{b})$ as $|z|\rightarrow\infty$ along some
nonreal rays dissecting the complex plane into sectors of opening angles less than $\nicefrac{\pi}{s}$.
Furthermore, assume that $H$ and $\tilde{H}$ have purely discrete spectra with convergence exponent at most $s$.
If the corresponding spectral measures $\rho$ and $\tilde{\rho}$ are equal, then we have $H=\tilde{H}$.
\end{theorem}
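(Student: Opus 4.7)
The plan is to reduce the claim directly to Corollary~\ref{corbmdis} by showing that the hypothesis $\rho = \tilde{\rho}$ forces the difference of the two singular Weyl functions to be an entire function, and then exploiting the absence of a growth restriction in that corollary.

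First, I would invoke the integral representation in Theorem~\ref{IntR}. Since $\rho = \tilde{\rho}$, I may pick one common auxiliary function $\hat{g}(z)$ meeting the integrability hypothesis of Theorem~\ref{IntR} for both measures simultaneously (e.g.\ take $\hat g$ adapted to $\rho$; it then works for $\tilde\rho$ as well). Applying the representation to $M$ and $\tilde{M}$ with this same $\hat{g}$, the weighted Cauchy-type integrals agree, so
\begin{equation*}
\tilde{M}(z) - M(z) = \tilde{E}(z) - E(z), \qquad z \in \C\setminus\R,
\end{equation*}
where $E$ and $\tilde{E}$ are the corresponding real entire functions produced by Theorem~\ref{IntR}. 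In particular, the difference $\tilde M - M$ extends to a real entire function $f(z) := \tilde{E}(z) - E(z)$ on all of $\C$.

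Next, I would verify that the hypotheses of Corollary~\ref{corbmdis} are in force: by assumption $\Phi(z,x)$ and $\tilde{\Phi}(z,x)$ are of growth order at most $s\geq1$ with matching asymptotics $\tilde{\Phi}(z,x) \sim \Phi(z,x)$ along nonreal rays dissecting $\C$ into sectors of opening angle less than $\pi/s$, and both $H$ and $\tilde H$ have purely discrete spectra with convergence exponent at most $s$. Then Corollary~\ref{corbmdis} applied to the entire function $f(z)$ constructed above yields $H = \tilde H$.

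The only possible obstacle is the legitimacy of choosing one common $\hat{g}$, but this is immediate because $\rho = \tilde{\rho}$ makes every integrability condition in Theorem~\ref{IntR} identical for the two measures. Crucially, no growth bound on $f$ is required here, which is precisely what makes Corollary~\ref{corbmdis} (as opposed to Corollary~\ref{corbm}) the correct tool: the entire function $\tilde{E} - E$ produced from the integral representation comes with no a priori growth control, but Corollary~\ref{corbmdis} accepts an arbitrary entire $f$ when the spectra are discrete of finite convergence exponent. This is the whole content of the reduction, and no further work is needed.
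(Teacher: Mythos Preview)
Your argument is correct and follows exactly the same approach as the paper: invoke Theorem~\ref{IntR} (with a common $\hat g$, which is trivially possible since $\rho=\tilde\rho$) to conclude that $\tilde M - M$ is entire, and then apply Corollary~\ref{corbmdis}, whose key feature is that no growth bound on $f$ is required. The paper's proof is simply a two-line version of what you wrote.
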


\begin{proof}
Since the spectral measures are the same, Theorem~\ref{IntR} shows that the difference of the corresponding singular Weyl functions is an entire function and Corollary~\ref{corbmdis} is applicable.
\end{proof}

We remark that similar results can be proven using the theory of de Branges spaces \cite{je,ekt}. However, our assumptions on $\Phi(z,x)$ here are of a different nature and more convenient to derive in certain situations; \cite[Section~6]{et}.  
Nevertheless, in some sense our assumptions here are stronger since they exclude (in the case $I=\R$) the possibility that one potential is a translation of the other one 
(which clearly would leave the spectral measure invariant).

Also note that in the case of discrete spectra, the spectral measure is uniquely determined by the eigenvalues $\lambda_n$ and the corresponding norming constants
\begin{equation}
 \gamma_n^2 = \int_a^b \left|\Phi(\lambda_n,x)\right|^2 dx,
\end{equation}
since in this case we have
\begin{equation}
 \rho = \sum_n \gamma_n^{-2} \delta_{\lambda_n},
\end{equation}
where $\delta_{\lambda}$ is the unit Dirac measure in the point $\lambda$.

As another application, we are also able to prove a generalization of Hochstadt--Lieberman type uniqueness results.
To this end, let us consider an operator $H$ whose spectrum is purely discrete and has convergence exponent (at most) $s$.
Since the operator $H^D_c = H^D_{(a,c)} \oplus H^D_{(c,b)}$ with an additional Dirichlet boundary condition at $c$ is
a rank one perturbation of $H$, we conclude that the  convergence exponents of both $H^D_{(a,c)}$ and $H^D_{(c,b)}$ are
at most $s$ and hence by Theorem~\ref{thm:IOphiev}, there are real entire solutions $\Phi(z,x)$ and $\Pi(z,x)$ of growth order at most $s$ which belong to
 the domain of $H$ near $a$ and $b$, respectively.

\begin{theorem}[\cite{et}]\label{thmHL}
Suppose $H$ is an operator with purely discrete spectrum of finite convergence exponent $s$. Let $\Phi(z.x)$ and $\Pi(z,x)$ be
entire solutions of growth order at most $s$ which lie in the domain of $H$ near $a$ and $b$, respectively, and suppose there is a $c\in I$ such that
\begin{equation}\label{eqeahl}
\frac{\Pi_1(z,c)}{\Phi_1(z,c)}=\OO(1),
\end{equation}
as $|z|\rightarrow\infty$ along some nonreal rays dissecting the complex plane into sectors of opening angles less than $\nicefrac{\pi}{s}$.
 Then every other isospectral operator $\tilde{H}$ for which $\tilde{Q}(x)=Q(x)$ almost everywhere on $(a,c)$ and which is associated with the same boundary
condition at $a$ (if any) is equal to $H$.
\end{theorem}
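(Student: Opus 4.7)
My plan is to apply Corollary~\ref{corbmdis}, which will give $H = \tilde H$ once I verify (i) a suitable asymptotic matching of $\tilde\Phi$ with $\Phi$ along the prescribed nonreal rays, and (ii) that $\tilde M - M$ coincides with an entire function. Since $H$ and $\tilde H$ share the same eigenvalues, they share the same convergence exponent $s$, so Theorem~\ref{thm:IOphiev} yields a real entire solution of growth at most $s$ in the domain of $\tilde H$ near $a$, and Lemma~\ref{lem:cor} furnishes a companion $\tilde\Theta$ with $W(\tilde\Theta,\tilde\Phi) = 1$ of the same growth. Because $\tilde Q = Q$ on $(a,c)$ and the boundary condition at $a$ is the same, Remark~\ref{rem:uniqExp} allows me to normalize so that
\[
\tilde\Phi(z,x) = \Phi(z,x), \qquad \tilde\Theta(z,x) = \Theta(z,x) - f(z)\Phi(z,x), \qquad x\in(a,c],
\]
for some real entire $f$ of growth at most $s$. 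In particular, the high-energy expansion~\eqref{IOphias} then yields $\tilde\Phi_1(z,x) \sim \Phi_1(z,x)$ along the rays, settling (i).

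For (ii), I plan to use the Plücker identity~\eqref{pluecker} applied to the four solutions $\Phi, \Theta, \Pi, \tilde\Pi$ at $x = c$. Since $W_c(\Theta,\Phi) = 1$ and, by the above normalization, $W_c(\Phi, \tilde\Pi) = W(\tilde\Phi,\tilde\Pi) = \tilde w(z)$, a short calculation starting from~\eqref{defM} yields the key identity
\begin{equation*}
\tilde M(z) - M(z) - f(z) = \frac{W_c(\Pi(z), \tilde\Pi(z))}{w(z)\,\tilde w(z)},
\end{equation*}
where $w = W(\Phi, \Pi)$ and $\tilde w = W(\tilde\Phi, \tilde\Pi)$ are both entire of growth at most $s$. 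By isospectrality, $w$ and $\tilde w$ have the common zero set $\{\lambda_n\}$, and at each eigenvalue $\lambda_n$ both $\Pi(\lambda_n,\cdot)$ and $\tilde\Pi(\lambda_n,\cdot)$ are scalar multiples of the $L^2$-eigenfunction, which on $(a,c]$ equals $\Phi(\lambda_n,\cdot) = \tilde\Phi(\lambda_n,\cdot)$; consequently $\Pi(\lambda_n,c)$ and $\tilde\Pi(\lambda_n,c)$ are collinear and $W_c(\Pi(\lambda_n), \tilde\Pi(\lambda_n)) = 0$. Hence the right-hand side is a priori meromorphic with at most simple poles at the $\lambda_n$.

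The main obstacle is upgrading this quotient to an entire function, i.e., showing that all these residues vanish. Here the growth hypothesis $\Pi_1(z,c)/\Phi_1(z,c) = \OO(1)$ on the nonreal rays is indispensable. On the one hand, Lemma~\ref{lemAsymM} supplies the decay $\tilde M(z) - M(z) - f(z) = \OO(1/\Phi_1(z,c)^2)$ along the rays, which is exponential in $|\im(z)|$. On the other hand, within each sector bounded by consecutive rays, of opening angle less than $\nicefrac{\pi}{s}$, the quotient is holomorphic because all its potential poles lie on the real axis outside the sector, so a Phragmén--Lindelöf argument propagates the exponential decay from the boundary into the full interior of the sector. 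The bound on $\Pi_1/\Phi_1$ is then used to control the factorization $w(z) = \Phi_1(z,c)[\Pi_2(z,c) - (\Pi_1/\Phi_1)(z,c)\Phi_2(z,c)]$ from below and $W_c(\Pi,\tilde\Pi)$ from above along the rays, ruling out non-zero residues at the $\lambda_n$ and forcing $\tilde M - M - f$ to be entire. Corollary~\ref{corbmdis} then delivers $H = \tilde H$.
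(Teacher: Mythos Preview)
Your key identity
\[
\tilde M(z)-M(z)-f(z)=\frac{W_c(\Pi(z),\tilde\Pi(z))}{w(z)\,\tilde w(z)}
\]
is correct and is the heart of the argument. The genuine gap is in the entireness step. You propose to run Phragm\'en--Lindel\"of on $g=\tilde M-M-f$ in the sectors between consecutive rays, asserting that $g$ is holomorphic there ``because all its potential poles lie on the real axis outside the sector''. But the rays are nonreal, so at least two of the sectors contain a real half-line and hence all of the $\lambda_n$; the argument collapses precisely where the poles live. Exponential decay along nonreal rays cannot by itself exclude real simple poles, and the closing sentence about ``controlling $w$ from below and $W_c(\Pi,\tilde\Pi)$ from above'' does not supply the missing mechanism.

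The remedy is to apply Phragm\'en--Lindel\"of not to $g$ but to the \emph{entire} function
\[
h(z)=\frac{W_c(\Pi(z),\tilde\Pi(z))}{\tilde w(z)}=w(z)\,g(z).
\]
Your collinearity observation shows that $W_c(\Pi,\tilde\Pi)$ vanishes at every $\lambda_n$, while $\tilde w$ has only simple zeros there, so $h$ is entire. Since $w$ and $\tilde w$ are entire of order at most $s$ with the \emph{identical} simple zero set (isospectrality), Hadamard gives $\tilde w=\E^{p}w$ with $\deg p\le s$; combining this with the Hadamard factorization of $W_c(\Pi,\tilde\Pi)$ (order $\le s$, divisible by $w$) shows that $h$ itself has order at most $s$. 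Along the rays, dividing numerator and denominator by $\tilde\Pi_1(z,c)$ yields
\[
h(z)=\frac{\Pi_1(z,c)}{\Phi_1(z,c)}\cdot\frac{\tilde m_+(z)-m_+(z)}{\tilde m_+(z)+m_-(z)}=\OO(1)\cdot o(1)\to 0,
\]
and \emph{this} is exactly where hypothesis~\eqref{eqeahl} enters. Now Phragm\'en--Lindel\"of applies legitimately in every sector (opening $<\pi/s$, function entire of order $\le s$), Liouville forces $h\equiv0$, hence $W_c(\Pi,\tilde\Pi)\equiv0$, hence $\tilde M-M=f$ is entire, and Corollary~\ref{corbmdis} finishes. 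Two minor points: you do not need Lemma~\ref{lem:cor} for $\tilde\Theta$, since Corollary~\ref{corbmdis} imposes no growth condition on $\Theta,\tilde\Theta$ (Lemma~\ref{lem:pt} suffices); and you should invoke Theorem~\ref{thm:IOphiev} at the right endpoint of $\tilde H$ to obtain $\tilde\Pi$ of order at most $s$, which is needed for the order bound on $h$.
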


Note that by \eqref{IOphias} the growth of $\Phi(\,\cdot\,,c)$ will increase as $c$ increases while (by reflection) the growth of $\Pi(\,\cdot\,,c)$
will decrease. In particular, if the bound \eqref{eqeahl} holds for some $c$, then it will hold for any other $c'>c$ as well.

As an example, we give a generalization of the Hochstadt--Lieberman result from \cite{hl} to Dirac operators on $(0,1)$ with
radial-type singularities at both endpoints.

\begin{theorem}
Let $\eta$, $\kappa\ge 0$ and consider an operator of the form
\begin{align}
H & = \frac{1}{\I} \sig_2 \frac{d}{dx} + Q(x), & Q(x) & = \begin{pmatrix}m & \frac{\kappa}{x} + \frac{\eta}{1-x}\\
\frac{\kappa}{x} + \frac{\eta}{1-x} &-m\end{pmatrix} + P(x),
\end{align}
on the interval $(0,1)$ such that 
\begin{align}
 f_\kappa(x) f_\eta(1-x) P(x) & \in L^1(0,1), & f_\kappa(x) & = \begin{cases}1, & \kappa\not=\frac{1}{2}, \\ 1 - \log(x), & \kappa=\frac{1}{2}.  \end{cases} 
\end{align}
If $0\le \kappa <\nicefrac{1}{2}$, then we choose the boundary condition at zero which is induced by the solutions $\Phi(z,x)$ from Section~\ref{secPertRD}. 
Suppose $\tilde{H}$ satisfies $\tilde{Q}(x)=Q(x)$ for $x\in(0,\nicefrac{1}{2}+\eps)$ and has the same boundary condition at zero if $0\le \kappa <\nicefrac{1}{2}$,
where $\eps=0$ if $\eta\ge \kappa$ and $\eps>0$ if $\eta<\kappa$. Then, $H=\tilde{H}$ if both have the same spectrum.
\end{theorem}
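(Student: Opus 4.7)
The plan is to reduce the theorem to the general Hochstadt--Lieberman result Theorem~\ref{thmHL}, chosen with exponent $s=1$ and matching point $c=\nicefrac{1}{2}+\eps$. Three hypotheses then need to be verified: (i) $H$ has purely discrete spectrum of convergence exponent at most $1$; (ii) there are real entire solutions $\Phi(z,x)$, $\Pi(z,x)$ of growth order at most $1$ lying in the domain of $H$ near $0$ and $1$ respectively; and (iii) the quotient estimate $\Pi_1(z,c)/\Phi_1(z,c)=\OO(1)$ holds along some nonreal rays partitioning $\C$ into sectors of opening less than $\pi$.

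Item (i) follows because $Q$ is a locally integrable perturbation of two radial Dirac singularities on a bounded interval, so $H$ has compact resolvent, and the eigenvalues obey the usual $\lam_n=n\pi+\OO(1)$ asymptotics, yielding convergence exponent $1$. For (ii), the solution $\Phi(z,x)$ near $x=0$ is produced by Lemma~\ref{lemPRDPhi} and is of finite exponential type by Lemma~\ref{lem:bes}. For $\Pi(z,x)$ near $x=1$, I would reduce to the same framework by the change of variable $\ti x=1-x$ combined with the involution $\sigma_3$, which simultaneously compensates the sign introduced in $\tfrac{1}{\I}\sigma_2\partial_x$ and conjugates $Q(1-\ti x)$ into a matrix of the same form but with $\eta$ playing the role of $\kappa$ at the new origin. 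The integrability hypothesis on $P$ is preserved by this reduction, so Lemmas~\ref{lemPRDPhi} and~\ref{lem:bes} again yield an entire finite-exponential-type $\Pi(z,x)$ in the domain of $H$ near $1$.

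For (iii), Lemma~\ref{lem:asphi} applied at each endpoint gives
\begin{equation*}
\Phi_1(z,c)\sim(\pm z)^{-\kappa}\sin\!\bigl(zc\mp\tfrac{\kappa\pi}{2}\bigr),\qquad
\Pi_1(z,c)\sim(\pm z)^{-\eta}\sin\!\bigl(z(1-c)\mp\tfrac{\eta\pi}{2}\bigr),
\end{equation*}
as $|z|\to\infty$ in any sector $|\arg(\pm z)|<\pi-\delta$. I would take the four nonreal rays $\arg z=\pm\nicefrac{\pi}{4},\pm\nicefrac{3\pi}{4}$, which dissect $\C$ into four sectors of opening $\nicefrac{\pi}{2}<\pi$. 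Along these rays $|\im(z)|\asymp|z|$, and the standard bound $|\sin(u+\I v)|\asymp\E^{|v|}$ for $|v|$ large gives
\begin{equation*}
\biggl|\frac{\Pi_1(z,c)}{\Phi_1(z,c)}\biggr|\asymp|z|^{\kappa-\eta}\,\E^{|\im(z)|(1-2c)}.
\end{equation*}
If $\eta\ge\kappa$ the choice $c=\nicefrac{1}{2}$ makes the exponent vanish and the polynomial factor bounded; if $\eta<\kappa$ the choice $c=\nicefrac{1}{2}+\eps$ makes the exponential decay dominate any polynomial growth. In both cases~\eqref{eqeahl} holds, and since the boundary condition at $0$ coincides by hypothesis, Theorem~\ref{thmHL} produces $H=\ti H$.

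The main obstacle is the careful execution of the reflection used to build $\Pi(z,x)$: one must verify that $\sigma_3$-conjugation combined with $x\mapsto 1-x$ maps the entire differential expression (mass terms, off-diagonal singularities, and perturbation alike) into one of exactly the form treated in Section~\ref{secPertRD} with radial parameter $\eta$ and trace-zero normalization, and that the integrability weight $f_\eta(1-x)$ in the hypothesis on $P$ is precisely what is needed after this reduction. A secondary point is that the sine factor in $\Phi_1(z,c)$ remains of size $\E^{|\im(z)|c}$ along the chosen rays rather than being anomalously small, which is automatic once $|\im(z)|\asymp|z|$ bounds the relevant argument uniformly away from the real zeros of $\sin$.
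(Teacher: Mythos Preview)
Your approach is exactly the paper's: the proof there reads in its entirety ``Immediate from Theorem~\ref{thmHL} together with the asymptotics of solutions for $H$ given in Lemma~\ref{lem:asphi},'' and you have correctly unpacked the three hypotheses that need checking.

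One concrete correction to the reflection step you flagged as the main obstacle: conjugation by $\sigma_3$ combined with $x\mapsto 1-x$ sends the off-diagonal entry $\frac{\kappa}{x}+\frac{\eta}{1-x}$ to $-\bigl(\frac{\kappa}{1-y}+\frac{\eta}{y}\bigr)$, since $\sigma_3\sigma_1\sigma_3=-\sigma_1$. This lands you on a radial singularity with parameter $-\eta$ rather than $+\eta$, outside the range $\kappa\geq 0$ assumed in Section~\ref{secPertRD}. Use $\sigma_1$ instead: $\sigma_1\sigma_2\sigma_1=-\sigma_2$ handles the sign from $d/dx\to -d/dy$, while $\sigma_1\sigma_1\sigma_1=\sigma_1$ preserves the off-diagonal sign, giving $+\frac{\eta}{y}$ near $y=0$. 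The mass term flips to $-m\sigma_3$, but as you note this is harmless since $m$ is absorbed into the perturbation anyway. With this fix your argument goes through.
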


\begin{proof}
Immediate from Theorem~\ref{thmHL} together with the asymptotics of solutions for $H$ given in Lemma~\ref{lem:asphi}.
\end{proof}

\bigskip
\noindent
{\bf Acknowledgments.}
We thank Fritz Gesztesy and Alexander Sakhnovich for hints with respect to the literature.
J.E.\ and G.T.\ gratefully acknowledge the stimulating atmosphere at the {\em Institut Mittag-Leffler} during spring 2013 where parts of this paper were written during the international research program on {\em Inverse Problems and Applications}.


\begin{thebibliography}{XX}
\bibitem{ahm}
S.\ Albeverio, R.\ Hryniv and  Ya.\ Mykytyuk, {\em Inverse spectral problems for Dirac operators with summable potentials}, Russ.\ J.\ Math.\ Phys.\ {\bf 12}, 406--423 (2005).
\bibitem{ahm2}
S.\ Albeverio, R.\ Hryniv and  Ya.\ Mykytyuk, {\em Reconstruction of radial Dirac operators}, J.\ Math.\ Phys.\ {\bf 48}, 043501, 14p (2007).
\bibitem{ahm3}
S.\ Albeverio, R.\ Hryniv and Ya. Mykytyuk, {\em Reconstruction of radial Dirac and Schr\"odinger operators
from two spectra}, J.\ Math.\ Anal.\ Appl.\ {\bf 339}, 45--57 (2008).
\bibitem{baev}
A. A. Balinsky and W. D. Evans, {\em Spectral Analysis of Relativistic Operators}, Imperial College Press, London 2011.
\bibitem{ben}
C. Bennewitz, {\em A proof of the local Borg--Marchenko theorem},
Commun. Math. Phys. {\bf 218}, 131--132 (2001).
\bibitem{borg}
G.\ Borg, {\em Uniqueness theorems in the spectral theory of  $y''+(\lambda-q(x))y=0$},
in "Den 11te Skandinaviske Matematikerkongress, Trondheim, 1949", 276--287, Johan Grundt Tanums Forlag, Oslo, 1952.
\bibitem{cg} S. Clark and F. Gesztesy, {\em Weyl--Titchmarsh $M$-function asymptotics, local uniqueness results, trace formulas, and Borg-type theorems for Dirac operators},
Trans. Amer. Math. Soc. {\bf 358},  3475--3534 (2002).
\bibitem{brun}
R.\ Brunnhuber, {\em Weyl--Titchmarsh--Kodaira theory for Dirac operators with strongly singular potentials}, master thesis, University of Vienna, 2012.
\url{http://othes.univie.ac.at/20726/}
\bibitem{je}
J.\ Eckhardt, {\em Inverse uniqueness results for Schr\"odinger operators using de Branges theory}, Complex Anal.\ Oper.\ Theory  {\bf 8}, 37--50 (2014).
\bibitem{je2}
J.\ Eckhardt, {\em Direct and inverse spectral theory of singular left-definite Sturm--Liouville operators}, J.\ Differential Equations {\bf 253} (2012), no.~2, 604--634.
\bibitem{egnt1}
J.\ Eckhardt, F.\ Gesztesy, R.\ Nichols, and G.\ Teschl, {\it Supersymmetry and
Schr\"odinger-type operators with distributional matrix-valued potentials}, J. Spectr. Theory (to appear).
\bibitem{egnt2}
J.\ Eckhardt, F.\ Gesztesy, R.\ Nichols, and G.\ Teschl, {\em Weyl--Titchmarsh theory for Sturm--Liouville operators with distributional potential potentials}, Opuscula Math.\ {\bf 33} (2013), no.~3, 467--563.
\bibitem{egnt3}
J.\ Eckhardt, F.\ Gesztesy, R.\ Nichols and G.\ Teschl, {\em Inverse spectral theory for Sturm--Liouville operators with distributional potentials}, J.\ Lond.\ Math.\ Soc.\ (2) {\bf 88}, 801--828 (2013).
\bibitem{ekt}
J.\ Eckhardt, A. Kostenko, and G.\ Teschl, {\em Inverse uniqueness results for one-dimensional weighted Dirac operators},  in "Spectral Theory and Differential Equations: V.A. Marchenko 90th Anniversary Collection", E. Khruslov et al. (eds), Advances in Mathematical Sciences {\bf 233}, Amer. Math. Soc., Providence, 2014.
\bibitem{ekt2}
J.\ Eckhardt, A. Kostenko, and G.\ Teschl, {\em Spectral asymptotics for one-dimensional weighted Dirac operators}, in preparation.
\bibitem{et}
J.\ Eckhardt and G.\ Teschl, {\em Uniqueness results for one-dimensional Schr\"odinger operators with purely discrete spectra}, Trans.\ Amer.\ Math.\ Soc.\ {\bf 365}, 3923--3942 (2013).
\bibitem{et2}
J.\ Eckhardt and G.\ Teschl, {\em Singular Weyl--Titchmarsh--Kodaira theory for Jacobi operators}, Oper. Matrices  {\bf 7}, 695--712 (2013).
\bibitem{ful08}
C.\ Fulton, {\em Titchmarsh--Weyl $m$-functions for second order Sturm--Liouville problems}, Math.\ Nachr.\ {\bf 281}, 1417--1475 (2008).
\bibitem{fl}
C.\ Fulton and H.\ Langer, {\em Sturm--Liouville operators with singularities and generalized Nevanlinna functions},
Complex Anal.\ Oper.\ Theory {\bf 4}, 179--243 (2010).
\bibitem{fll}
C.\ Fulton, H.\ Langer and A.\ Luger, {\em Mark Krein's method of directing functionals and singular potentials}, Math.\ Nachr.\ {\bf 285}, 1791--1798 (2012).
\bibitem{gkm}
F.\ Gesztesy, A.\ Kiselev, and K.\ A.\ Makarov, {\em Uniqueness results for matrix-valued Schr\"odinger, Jacobi, and Dirac-type operators}, Math. Nachr. {\bf 239-240}, 103--145 (2002).
\bibitem{gz}
F.\ Gesztesy and M.\ Zinchenko, {\em On spectral theory for Schr\"odinger operators with strongly singular  potentials},
Math.\ Nachr.\ {\bf 279}, 1041--1082 (2006).
\bibitem{gtv} D. M. Gitman, I. V. Tyutin, and B. L. Voronov, {\em Self-adjoint Extensions in Quantum Mechanics},
Birkh\"auser, New York, 2012.
\bibitem{HS81} 
D.\ B.\ Hinton and J.\ K.\ Shaw, {\em On Titchmarsh--Weyl $M(\lambda)$-functions for linear Hamiltonian systems}, J.\ Differential Equation {\bf 40} (1981), no.~3, 316--342.
\bibitem{HS82} 
D.\ B.\ Hinton and J.\ K.\ Shaw, {\em On the spectrum of a singular Hamiltonian system}, Quaestiones Math.\ {\bf 5} (1982/83), no.~1, 29--81.
\bibitem{HS83} 
D.\ B.\ Hinton and J.\ K.\ Shaw, {\em Hamiltonian systems of limit point or limit circle type with both endpoints singular}, J.\ Differential Equations {\bf 50} (1983), no.~3, 444--464.
\bibitem{HS84} 
D.\ B.\ Hinton and J.\ K.\ Shaw, {\em On boundary value problems for Hamiltonian systems with two singular points}, SIAM J.\ Math.\ Anal.\ {\bf 15} (1984), no.~2, 272--286.
\bibitem{HS86} 
D.\ B.\ Hinton and J.\ K.\ Shaw, {\em On the spectrum of a singular Hamiltonian system, II}, Quaestiones Math.\ {\bf 10} (1986), no.~1, 1--48.
\bibitem{HS93}  
D.\ B.\ Hinton and A.\ Schneider, {\em On the Titchmarsh--Weyl coefficients for singular $S$-Hermitian Systems I}, Math.\ Nachr.\ {\bf 163} (1993), 323--342.
\bibitem{HS97}  
D.\ B.\ Hinton and A.\ Schneider, {\em On the Titchmarsh--Weyl coefficients for singular $S$-Hermitian Systems II}, Math.\ Nachr.\ {\bf 185} (1997), 67--84.
\bibitem{hl}
H.\ Hochstadt and B.\ Lieberman, {\em An inverse Sturm--Liouville problem with mixed given data}, SIAM J.\ Appl.\ Math.\ {\bf 34}, 676--680 (1978).
\bibitem{ka}
I.\ S.\ Kac, {\em The existence of spectral functions of generalized second order differential systems with boundary conditions at the singular end}, AMS Translations (2) {\bf 62},, 204--262 (1967).
\bibitem{ko}
K.\ Kodaira, {\em The eigenvalue problem for ordinary differential equations of the second order and Heisenberg's theory of $S$-matrices}, Amer.\ J.\ Math.\ {\bf 71}, 921--945 (1949).
\bibitem{kst}
A.\ Kostenko, A.\ Sakhnovich, and G.\ Teschl, {\em Inverse eigenvalue problems for perturbed spherical Schr\"odinger operators},
Inverse Problems {\bf 26}, 105013, 14pp (2010).
\bibitem{kst2}
A.\ Kostenko, A.\ Sakhnovich, and G.\ Teschl, {\em Weyl--Titchmarsh theory for Schr\"odinger operators with strongly singular potentials},
Int.\ Math.\ Res.\ Not.\ {\bf 2012}, 1699--1747 (2012).
\bibitem{kst3}
A.\ Kostenko, A.\ Sakhnovich, and G.\ Teschl, {\em Commutation methods for Schr\"odinger operators with strongly singular potentials}, Math.\ Nachr.\ {\bf 285}, 392--410 (2012).
\bibitem{kt}
A.\ Kostenko and G.\ Teschl, {\em On the singular Weyl--Titchmarsh function of
perturbed spherical Schr\"odinger operators}, J.\ Differential Equations {\bf 250}, 3701--3739 (2011).
\bibitem{kt2}
A.\ Kostenko and G.\ Teschl, {\em Spectral asymptotics for perturbed spherical Schr\"odinger operators and applications to quantum scattering}, Comm.\ Math.\ Phys.\ {\bf 322}, 255--275 (2013).
\bibitem{kl}
P.\ Kurasov and A.\ Luger, {\em An operator theoretic interpretation of the generalized Titchmarsh--Weyl coefficient for a singular Sturm--Liouville problem},
Math.\ Phys.\ Anal.\ Geom.\ {\bf 14}, 115--151 (2011).
\bibitem{lev}
B.\ Ya.\ Levin, {\em Lectures on Entire Functions}, Transl.\ Math.\ Monographs {\bf 150}, Amer.\ Math.\ Soc., Providence, RI, 1996.
\bibitem{ls} B.\ M.\ Levitan and I.\ S.\ Sargsjan, {\em Sturm--Liouville and Dirac
Operators}, Kluwer Academic Publishers, Dordrecht 1991.
\bibitem{lema}
M.\ Lesch and M.\ M.\ Malamud, {\em The inverse spectral problem for first order systems
on the half line},
Oper. Theory: Adv. Appl. {\bf 117}, 199--238, (2000).
\bibitem{ma}
M.\ M.\ Malamud, {\em Uniqueness questions in inverse problems for systems of differential equations on a finite interval}, Trans.\ Moscow Math.\ Soc. {\bf 60}, 173--224 (1999).
\bibitem{ma2}
M.\ M.\ Malamud, {\em Borg type theorems for first-order systems on a finite interval},
Funct. Anal. Appl. {\bf 33}, 64--68 (1999).
\bibitem{mar}
V.\ A.\ Mar{\v{c}}enko, {\em Some questions of the theory of one-dimensional linear differential operators of the second order. I},
Trudy Moskov. Mat. Ob\v s\v c.{\bf 1}, 327--420 (1952).
\bibitem{mp}
Ya.\ V.\ Mykytyuk and D.\ V.\ Puyda, {\em Inverse spectral problems for Dirac
operators on a finite interval}, J. Math. Anal. Appl. {\bf 386}, 177--194 (2012).
\bibitem{dlmf}
F.\ W.\ J.\ Olver et al., {\em NIST Handbook of Mathematical Functions}, Cambridge University Press, Cambridge, 2010.
\bibitem{pu}
D.\ V.\ Puyda, {\em Inverse spectral problems for Dirac operators with summable
matrix-valued potentials}, Integral Equations Operator Theory {\bf 74}, 417--450 (2012).
\bibitem{sa02}
A.\ Sakhnovich, {\em Dirac type and canonical systems: spectral and Weyl--Titchmarsh matrix functions, direct and inverse problems}, Inverse Problems {\bf 18} (2002), no.~2, 331--348. 
\bibitem{sa06}
A.\ Sakhnovich, {\em Skew-self adjoint discrete and continuous Dirac-type systems: inverse problems and Borg--Marchenko theorems}, Inverse Problems {\bf 22} (2006), no.~6, 2083--2101. 
\bibitem{sakb}
A.\ L.\ Sakhnovich, L.\ A.\ Sakhnovich, and  I.\ Ya.\ Roitberg, {\em Inverse Problems and Nonlinear Evolution Equations. Solutions, Darboux Matrices and Weyl--Titchmarsh Functions}, Walter de Gruyter, Berlin, 2013.
\bibitem{ser}
F.\ Serier, {\em Inverse spectral problem for singular Ablowitz--Kaup--Newell--Segur operators on $[0, 1]$}, Inverse Problems {\bf 22}, 1457--1484 (2006).
\bibitem{st}
R. Stadler and G. Teschl, {\em Relative oscillation theory for Dirac operators}, J. Math. Anal. Appl. {\bf 371}, 638--648 (2010).
\bibitem{teosc}
G.\ Teschl, {\em Renormalized oscillation theory for Dirac operators}, Proc. Amer. Math. Soc. {\bf 126}, 1685--1695 (1998).
\bibitem{tschroe}
G.\ Teschl, {\em Mathematical Methods in Quantum Mechanics; With Applications to Schr\"odinger Operators},
Amer. Math. Soc., Providence, 2009.
\bibitem{tode}
G.\ Teschl, {\em Ordinary Differential Equations and Dynamical Systems}, Amer. Math. Soc., Providence, 2012.
\bibitem{th}
B.\ Thaller, {\em The Dirac Equation}, Springer, Berlin, 1991.
\bibitem{wat}
G.\ N.\ Watson, {\em A Treatise on the Theory of Bessel Functions}, Cambridge University Press, Cambridge, 1944.
\bibitem{wdl}
J.\ Weidmann, {\em Spectral Theory of Ordinary Differential Operators},
Lecture Notes in Mathematics {\bf 1258}, Springer, Berlin, 1987.
\bibitem{wei2}
J.\ Weidmann, {\em Lineare Operatoren in Hilbertr\"aumen, Teil 2: Anwendungen},
B. G. Teubner, Stuttgart, 2003.
\end{thebibliography}
\end{document}